\documentclass{amsart}

\usepackage[normalem]{ulem}

\usepackage{float}

\usepackage{lineno}

\usepackage{graphicx}

\usepackage{soul}

\usepackage{amssymb, amsmath, amsthm, hyperref, euscript, enumerate, color}
\usepackage[dvipsnames]{xcolor}

\usepackage{amscd}

\usepackage[shortlabels]{enumitem}

\usepackage{bbm}

\usepackage{subcaption}

\usepackage{mathtools}
\usepackage{tikz-cd}

\usepackage[english]{babel}
\numberwithin{equation}{section}

\theoremstyle{plain}

\newtheorem{theorem}{Theorem}

\newtheorem{proposition}[theorem]{Proposition}

\newtheorem{remark}[theorem]{Remark}

\newtheorem{lemma}[theorem]{Lemma}

\newtheorem{thm}{Theorem}

\newtheorem{qst}[thm]{Question}

\theoremstyle{definition}
\newtheorem{definition}[theorem]{Definition}

\newcommand{\R}{\mathbb{R}}

\newcommand{\Z}{\mathbb{Z}}

\newcommand{\N}{\mathbb{N}}

\makeatletter
\@namedef{subjclassname@2020}{%
  \textup{2020} Mathematics Subject Classification}
\makeatother

\begin{document}

\author{Valentina Bais, Younes Benyahia, Oliviero Malech and Rafael Torres}

\title[(Un)linked 2-links in 4-manifolds and knot surgery]{A recipe for exotic 2-links in closed 4-manifolds whose components are topological unknots}

\address{Scuola Internazionale Superiore di Studi Avanzati (SISSA)\\ Via Bonomea 265\\34136\\Trieste\\Italy}

\email{$\{vbais, ybenyahi, omalech, rtorres\}$@sissa.it}

\subjclass[2020]{Primary 57K45, 57R55; Secondary 57R40, 57R52}

\maketitle

\emph{Abstract}: We describe a construction procedure of infinite sets of $2$-links in closed simply connected 4-manifolds that are topologically isotopic, smoothly inequivalent and componentwise topologically unknotted. These 2-links are the first examples of such kind in the literature. The examples provided have surface and free groups as their 2-link groups. We also point out an exotic Brunnian behaviour of such families, which highlights the important role of linking in creating exotic phenomena.

\section{Introduction and main results.}\label{Introduction}

All embeddings and manifolds in this paper are smooth unless it is stated otherwise. The main contribution of this paper is to introduce a construction procedure of infinite sets of multiple components 2-links in closed 4-manifolds that are topologically isotopic, yet smoothly inequivalent, and whose components are topologically unknotted. The explicit examples that we produce have the property that the fundamental group of the complement can be chosen to be any free group or a surface group. We begin by making these notions precise in the following definition. 

%give new examples of such phenomena, in which the connected components of the surfaces under consideration are smoothly embedded $2$-spheres inside some simply connected closed $4$-manifold. In the following, we will call such embedded surfaces ‘$2$-links’. Among the interesting phenomena regarding the $4$-dimensional realm, there is the exotic behaviour of embedded surfaces.

\begin{definition}\label{Definition Knotted Link}$\bullet$ A $k$-component $2$-link
\begin{equation}\label{Definition Link}\Gamma = S_1 \sqcup \cdots \sqcup S_k\subset X \end{equation}
in a $4$-manifold $X$ is an unordered union of disjointly embedded $2$-spheres $S_i$ with trivial tubular neighborhood $\nu(S_i) = D^2\times S^2$ for $i=1,\dots,k$ and $k\in \N$. We say that a component $S_i$ is topologically unknotted if there is a locally flat embedded 3-ball $D_i\subset X$ such that $\partial D_i = S_i$.

$\bullet$ The fundamental group $\pi_1(X\setminus \Gamma)$ of the complement of (\ref{Definition Link}) is called the 2-link group.

$\bullet$ A 2-link (\ref{Definition Link}) is symmetric if for any permutation $\sigma$ of $\{1, \ldots, k\}$ there is a diffeomorphism $\phi: X\rightarrow X$ such that $\phi(S_i) = S_{\sigma(i)}$ for $i = 1, \ldots, k$.

$\bullet$ Two $k$-component $2$-links $\Gamma$ and $\Gamma'$ embedded in a $4$-manifold $X$ are smoothly inequivalent if there is no diffeomorphism of pairs\begin{equation*}(X, \Gamma)\rightarrow (X, \Gamma').\end{equation*} 

$\bullet$ A family of $k$-component $2$-links is exotic if its elements are topologically isotopic and pairwise smoothly inequivalent.

\end{definition}

There has been a flurry of research on exotic embeddings of surfaces in closed 4-manifolds ignited by Fintushel-Stern's foundational paper \cite{[FintushelSternS]}. Finding such embeddings of nullhomotopic 2-spheres is particularly difficult:  just notice that it is still unknown if there is an exotic $S^2$ in $S^4$. There is an implicit topologically unknotted 2-sphere that is smoothly knotted in a closed simply connected $4$-manifold in work of Fintushel-Stern \cite{[FintushelStern1]} as pointed out by Ray-Ruberman in \cite{[RayRuberman]}, and infinite sets of such embeddings of 2-spheres were constructed by the fourth author of this note in \cite[Theorem A]{[Torres]}. %Hayden-Kjuchukova-Kirshna-Miller-Powell-Sunukjian constructed an example of one pair of 2-links in a closed 4-manifold such that its two components are topologically isotopic rel. boundary but not smoothly equivalent in \cite[Theorem 8.2]{[HKKMPS]}. A  contribution of Theorem \ref{Theorem A} is to extend these results. 

%Moving to the multiple component case, examples of infinite families of exotic properly embedded surfaces in the 4-ball $D^4$ were constructed by Hayden-Kjuchukova-Krishna-Miller-Powell-Sunukjian in \cite{[HKKMPS]}; the adjective ‘exotic’ in this case means topologically isotopic rel. boundary and pairwise smoothly inequivalent, and their examples include 2-disks. Working in the relative setting of a non-empty boundary turns out to be helpful since it allows to use several techniques coming from $3$-dimensional topology. 

%In particular, the technique of Bing doubling is used to create exotic pairs of properly embedded disjoint unions of $2$-disks inside the $4$-ball $B^4$. %Moreover, Rim surgery is used to produce, for any pair of integer numbers $r \geq 0$ and $s \geq 1$, an infinite family of smooth $2$-component $2$-links, each one consisting of a genus $r$ surface and a genus $s$ surface properly embedded in $B^4$.  

%The situation is strikingly different when the ambient 4-manifold is closed due to the scarcity of techniques and tools  available. Just notice that it is not yet known whether there are exotic, closed, orientable surfaces in the $4$-sphere $S^4$. 

%We make clear what we mean by a $2$-link in the following definition.

The construction procedure we introduce in this paper extends these results to multiple components 2-links as we now sample with our main result. A closed oriented surface of genus $g$ is denoted by $\Sigma_g$, and the free group of $g$ generators by $F_g$.

\begin{thm}\label{Theorem A} Let $M$ be an admissible $4$-manifold as in Definition \ref{Definition Admissible Manifold} and fix $g\in \N$. Let $G$ be a group isomorphic to either $\pi_1(\Sigma_g)$ or $F_g$, and let $n$ be its rank. Let $\mathcal{K}$ be an infinite family of knots $K \subset S^3$ with pairwise distinct Alexander polynomials.
\\ There is an infinite set of smooth $n$-component 2-links\begin{equation}\label{Unlink Main}\{\Gamma_{K} = S_{1, K} \sqcup \cdots \sqcup S_{n, K}\mid K\in \mathcal{K} \}\end{equation} 
which are
smoothly embedded in $M\#n(S^2\times S^2)$ and have the following properties.\begin{enumerate}

\item Their 2-link group is $G$.
\item The 2-links in (\ref{Unlink Main}) form an exotic family as in Definition \ref{Definition Knotted Link}.
\item The components of (\ref{Unlink Main}) are topologically unknotted.
\item Surgery on the components of (\ref{Unlink Main}) yields an infinite set of pairwise non-diffeomorphic closed 4-manifolds in a same homeomorphism class with fundamental group $G$.
%\item There is an $m\in \N$ such that the 2-links (\ref{Unlink Main}) are smoothly equivalent in $M\#(n + m)(S^2\times S^2)$.

\end{enumerate}
\end{thm}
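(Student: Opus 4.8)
The plan is to build a single ``model'' $n$-component link $\Gamma_0=S_{1,0}\sqcup\cdots\sqcup S_{n,0}\subset X:=M\#n(S^2\times S^2)$ together with a framed, square-zero torus $T$ sitting in the complement $X\setminus\Gamma_0$, and then to obtain the whole family by Fintushel--Stern knot surgery: $\Gamma_K$ is the image of $\Gamma_0$ after replacing a tubular neighbourhood $\nu(T)=T^2\times D^2$ by $(S^3\setminus\nu(K))\times S^1$. Since the surgery is supported in $X\setminus\Gamma_0$, each $\Gamma_K$ is a link in the same ambient manifold $X$ --- the torus $T$ will be chosen null-homologous in $X$ and placed in a region where the $S^2\times S^2$ summands absorb the knot surgery, so that $X_K\cong X$ --- and $X\setminus\Gamma_K$ agrees with $X\setminus\Gamma_0$ outside $\nu(T)$. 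Thus all three topological assertions can be checked on the model and transported across the knot surgery.

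Constructing the model is the crux, and this is where admissibility (Definition~\ref{Definition Admissible Manifold}) is used. Starting from an admissible $M$ one assembles, inside a cusp neighbourhood / Gompf nucleus, a thickening $W_0$ of a presentation $2$-complex for $G$ --- a wedge of $n$ circles when $G\cong F_g$, a single $2$-cell attached along $\prod_{i=1}^{g}[a_i,b_i]$ when $G\cong\pi_1(\Sigma_g)$ --- arranging that $W_0$ is minimal symplectic with $b^+(W_0)\ge 2$ and contains a symplectic square-zero torus $T$, disjoint from the $2$-complex, whose class has infinite order in $H_2(W_0;\Z)$. One then checks that surgering the $n$ circles $\gamma_i$ dual to the chosen generators of $G=\pi_1(W_0)$ converts $W_0$ into $M\#n(S^2\times S^2)$, the belt spheres of these surgeries being $S_{1,0},\dots,S_{n,0}$ and the class $[T]$ becoming null-homologous in the process. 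By van Kampen the link complement $X\setminus\Gamma_0$ is recovered from $W_0$ by replacing each $D^3\times S^1$ with an $S^2\times D^2$, so $\pi_1(X\setminus\Gamma_0)\cong\pi_1(W_0)\cong G$ with the meridian $\mu_i$ of $S_{i,0}$ carried to the $i$-th generator; this is~(1). Retaining only $S_{i,0}$ and filling the remaining spheres kills every meridian but $\mu_i$, so $\pi_1(X\setminus S_{i,K})\cong\Z$ and $[S_{i,K}]=0$, and then~(3) follows from the topological unknotting criterion for null-homologous locally flat $2$-spheres with infinite cyclic complement group in a simply connected $4$-manifold (Freedman; Lee--Wilczyński).

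For~(2): knot surgery along $T$ changes neither $\pi_1$ nor the intersection form nor the Kirby--Siebenmann invariant of the complement, so $(X,\Gamma_K)$ is homeomorphic to $(X,\Gamma_0)$ as a pair; the homeomorphism may be taken to act trivially on $H_2(X)$ (as $[\Gamma_\bullet]=0$), hence is topologically isotopic to the identity of $X$ by Quinn's theorem, so $\Gamma_K$ is topologically isotopic to $\Gamma_0$ and all members of~(\ref{Unlink Main}) are topologically isotopic. Smooth inequivalence, together with~(4), comes from surgering the components: a diffeomorphism of pairs $(X,\Gamma_K)\to(X,\Gamma_{K'})$ would descend to a diffeomorphism $W_K\to W_{K'}$, where $W_K$ denotes the closed $4$-manifold obtained by replacing each $\nu(S_{i,K})=S^2\times D^2$ with $D^3\times S^1$; but $W_K$ is precisely the result of Fintushel--Stern knot surgery on $T\subset W_0$, so since $W_0$ is symplectic with $b^+>1$ and $[T]$ is of infinite order, $\mathrm{SW}_{W_K}=\mathrm{SW}_{W_0}\cdot\Delta_K(t)$ with $t=\exp(2[T])$, and these Seiberg--Witten polynomials are pairwise distinct because the $\Delta_K$ are. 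Hence the $W_K$ are pairwise non-diffeomorphic, while all are homeomorphic (knot surgery here preserves the homeomorphism type) with fundamental group $G$; as $\mathcal K$ is infinite this proves the theorem.

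The main obstacle is the construction of the model $W_0$: one must, at the same time, (a) realise $G$ as $\pi_1(W_0)$ by a $2$-complex whose regular neighbourhood embeds in $W_0$ with the prescribed meridians, so that surgering the dual circles returns $M\#n(S^2\times S^2)$; (b) keep $W_0$ symplectic with $b^+\ge 2$ after the fibre-sum/knot-theoretic operations performed on $M$; and (c) position the torus $T$ so that it is homologically essential and of infinite order in $W_0$ yet becomes null-homologous once the $n$ belt spheres are filled back in, while the stabilisations in $M\#n(S^2\times S^2)$ suffice to make knot surgery on $T$ leave the ambient manifold diffeomorphic to $M\#n(S^2\times S^2)$. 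Reconciling the requirements in (c) with the non-vanishing of $\mathrm{SW}_{W_0}$ in (b) is the delicate point; the remaining verifications are Kirby-calculus bookkeeping and standard properties of knot surgery.
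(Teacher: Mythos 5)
Your overall architecture matches the paper's: realize $G$ as $\pi_1$ of a closed $4$-manifold $W_0$ containing a suitable torus, surger $n$ circles generating $G$ to land in $M\#n(S^2\times S^2)$ with the belt spheres as the link, generate the family by Fintushel--Stern surgery on a torus lying in the link complement, distinguish the links by the Seiberg--Witten polynomials of the surgered-back closed manifolds, and get topological isotopy from Quinn and componentwise unknottedness from the $\pi_1=\Z$ complement criterion. The problem is that you leave the central existence statement unproved: you explicitly defer the construction of the model $W_0$ satisfying your conditions (a)--(c) as ``the main obstacle'' and ``the delicate point''. That construction is where essentially all of the paper's work lies: there $W_0$ is the generalized fiber sum $M_K\#_{T^2}B_G$ along the second torus $T_2$ of the admissible manifold, with $B_G=T^2\times\Sigma_g$ or the manifold $N_g$ assembled from Kodaira--Thurston blocks, and verifying that surgery on the generating circles returns exactly $M\#n(S^2\times S^2)$ (an even, not odd, stabilization, with the framed torus and the belt spheres landing where they must) requires the Moishezon trick, explicit Kirby calculus identifying $B_G^*\approx(T^2\times S^2)\#n(S^2\times S^2)$ relative to the framed torus, and the dissolution theorems of Akbulut, Auckly and Baykur. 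Asserting that ``one then checks'' this is not a proof; as a blind proposal it records the right constraints but does not resolve them.

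Separately, one step as stated would fail. You require the knot-surgery torus $T$ to be null-homologous in $X=M\#n(S^2\times S^2)$ and suggest that this is why $X_K\cong X$. Null-homology gives no such thing: knot surgery on null-homologous tori routinely changes smooth structures (this is exactly Fintushel--Stern's reverse engineering), and the product formula $\mathcal{SW}_{W_K}=\mathcal{SW}_{W_0}\cdot\Delta_K(e^{2[T]})$ that you invoke needs $[T]$ essential with simply connected complement. In the paper's construction the surgery torus $T_1$ has a geometric dual in $M$ (since $M\setminus(\nu(T_1)\sqcup\nu(T_2))$ is simply connected), so it remains homologically essential in the link complement inside $X$; the diffeomorphism $M_K\#n(S^2\times S^2)\approx M\#n(S^2\times S^2)$ comes from the stabilization/dissolution results for knot-surgered manifolds, i.e.\ the extra $S^2\times S^2$ summands absorb the surgery, not the vanishing of $[T]$. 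Dropping the null-homology requirement removes this inconsistency from your list of constraints, but the main gap — actually producing $W_0$ and identifying the result of the circle surgeries — remains.
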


 Theorem \ref{Theorem A} unveils exotic phenomena of codimension two embeddings that were not previously known to exist. A detailed description of the procedure is given in Section \ref{Section Strategy}. Examples of exotic 2-links in closed 4-manifolds with homologically essential components are available. Auckly-Kim-Melvin-Ruberman \cite[Theorem B]{[AucklyKimMelvinRuberman]} produced infinite sets of exotic 2-links with trivial 2-link group and whose components have self-intersection +1. Hayden-Kjuchukova-Krishna-Miller-Powell-Sunukjian provided pairs of exotic $2$-component $2$-links in \cite[Theorem 8.2]{[HKKMPS]}; see \cite{[Hayden]} as well. 

Much like the examples in \cite{[AucklyKimMelvinRuberman], [HKKMPS]}, linking is of significant importance in the exotic behaviour of the $2$-links of Theorem \ref{Theorem A}. In order to understand this behaviour better, we introduce the following notion. 

\begin{definition}\label{Definition Brunnianly}An exotic pair of smooth $k$-component 2-links\begin{center}$\Gamma= S_1 \sqcup \dots \sqcup S_k$ and $\Gamma'=S_1' \sqcup \dots \sqcup S_k'$\end{center} in the sense of Definition \ref{Definition Knotted Link} is Brunnianly exotic if the $2$-links\begin{center}$\Gamma \setminus S_i$ and $ \Gamma' \setminus S_j'$\end{center} are smoothly equivalent for any $i,j = 1, \dots,k$.
\end{definition}

The Brunnianity property of Definition \ref{Definition Brunnianly} rules out uninteresting constructions of exotic 2-links with free 2-link group as described in Remark \ref{Remark Trivial Situation}. Our second main result shows that the examples of Theorem \ref{Theorem A} satisfy this property and that they stabilize by taking the connected sum with a single copy of $S^2\times S^2$ at a point away from every element in (\ref{Unlink Main}).

\begin{thm}\label{Theorem B}\

$\bullet$ Every 2-link in the infinite set (\ref{Unlink Main}) of Theorem \ref{Theorem A} with free $2$-link group is smoothly symmetric.
    
   $\bullet$ There is an infinite subset of (\ref{Unlink Main}) made of pairwise Brunnianly exotic $2$-links smoothly embedded in $M\# g (S^2\times S^2)$. Moreover, elements in this infinite subset are pairwise smoothly equivalent in $(M\# g(S^2\times S^2))\# (S^2\times S^2)$.
    
    %The $2$-links in the infinite family of Theorem \ref{Theorem A} with $2$-link group equal to the free group $F_g$ of $g$ generators are pairwise Brunnianly exotic.
    %Moreover, each 2-link of the family is smoothly symmetric.
\end{thm}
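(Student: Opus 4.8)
The plan is to exploit the modular, local nature of the construction of Section \ref{Section Strategy}. In the free case the $2$-link group is $F_g$ (so the rank is $n=g$) and $\Gamma_K$ is assembled from $g$ copies of the \emph{same} local modification, one inside each $S^2\times S^2$ summand, whereas in the surface-group case the components play structurally different roles — they realize the generators entering the surface relator — which is why the symmetry statement is restricted to the free case. Concretely, for the first bullet I would check that when the $2$-link group is $F_g$ each component $S_{i,K}$, together with the knot-surgery datum attached to it, sits in its own connected-summand region $W_i\subset M\# g(S^2\times S^2)$, with the pair $(W_i,S_{i,K})$ diffeomorphic to a fixed pair $(W_K,S_K)$ independent of $i$ (the same knot $K$ is used for every component). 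Given a permutation $\sigma$ of $\{1,\dots,g\}$, one builds a self-diffeomorphism $\phi$ of $M\# g(S^2\times S^2)$ that restricts to the identity on $M$ and permutes the $W_i$ according to $\sigma$, using only that connected sum is commutative and associative up to isotopy of the connect-sum $3$-spheres; since the $W_i$ are pairwise identical, $\phi(S_{i,K})=S_{\sigma(i),K}$, so $\Gamma_K$ is smoothly symmetric as in Definition \ref{Definition Knotted Link}.

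For the Brunnianity statement, fix an index $i$ and consider the $(n-1)$-component $2$-link $\Gamma_K\setminus S_{i,K}\subset M\# n(S^2\times S^2)$. The key observation is that removing $S_{i,K}$ leaves one $S^2\times S^2$ summand no longer constrained by the link, and that this free summand can be used to dissolve \emph{all} of the remaining Fintushel--Stern knot surgeries: one pushes it in turn through each of the $n-1$ surgered regions, undoing that surgery and recovering the summand, in the spirit of Auckly's stabilization results for knot surgery. Carrying this out over a suitable infinite subfamily $\mathcal K_0\subseteq\mathcal K$, and tracking the embedded spheres throughout, one identifies by a diffeomorphism of pairs $(M\# n(S^2\times S^2),\ \Gamma_K\setminus S_{i,K})$ with the pair produced by the same construction using the unknot, independently of $i$ and of $K\in\mathcal K_0$. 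Hence $\Gamma_K\setminus S_{i,K}$ and $\Gamma_{K'}\setminus S_{j,K'}$ are smoothly equivalent for all $i,j$ and all $K,K'\in\mathcal K_0$; since the full links $\{\Gamma_K\mid K\in\mathcal K_0\}$ remain pairwise exotic by Theorem \ref{Theorem A}(2), this subfamily is pairwise Brunnianly exotic in the sense of Definition \ref{Definition Brunnianly}.

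For the last assertion one instead keeps all $n$ components and attaches a single new copy of $S^2\times S^2$ to $M\# n(S^2\times S^2)$ at a point away from $\Gamma_K$. This new summand carries no link component, so it is free from the start and can play, one at a time, the role above for each of the $n$ parallel surgeries. Running the dissolving argument $n$ times then yields, again over an infinite subfamily, a diffeomorphism of pairs between $\big((M\# n(S^2\times S^2))\#(S^2\times S^2),\,\Gamma_K\big)$ and the same pair built from any other knot, so these stabilized $2$-links are pairwise smoothly equivalent.

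The main obstacle I anticipate is upgrading the dissolving statements from the level of closed $4$-manifolds to the level of \emph{pairs}, uniformly in the knot: the dissolving diffeomorphism has to be arranged so as to carry the link components to the standard ones. This is precisely what should force the passage to the infinite subfamily $\mathcal K_0$ — plausibly a family of knots related by a controlled sequence of crossing changes, for which an Auckly-type ``one stabilization suffices'' mechanism can be run away from the spheres — and it comes bundled with the bookkeeping needed to verify that $\mathcal K_0$ still consists of knots with pairwise distinct Alexander polynomials, so that Theorem \ref{Theorem A} still applies to $\mathcal K_0$ and the stabilized-equivalent links are genuinely exotic before stabilization.
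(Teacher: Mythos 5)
Your high-level mechanisms are the right ones --- symmetry of the components should come from a symmetry of the building block, and Brunnianity from using the $S^2\times S^2$ summand freed by deleting a component to dissolve the exotic structure rel the remaining components --- but your picture of the construction is wrong in a way that matters. There is exactly \emph{one} Fintushel--Stern knot surgery in the whole construction, performed along the torus $T_1\subset M$ before anything involving the link components happens; the $g$ components of $\Gamma_K$ are the belt spheres of $g$ loop surgeries along the curves $\gamma_i'\subset N_g$, and none of them carries its own ``knot-surgery datum''. So in the first bullet there is no family of pairwise-identical decorated summands $(W_i,S_{i,K})$ to permute. The paper instead produces, for each permutation $\sigma$, a self-diffeomorphism of the building block $N_g$ that is the identity near the fiber-sum torus $T$ and sends $\gamma_i'$ to $\gamma_{\sigma(i)}'$ (possible because $N_g$ is $S^1$ times the mapping torus of $D_{a_1}\circ\cdots\circ D_{a_g}$ and the handles of $\Sigma_g$ can be permuted compatibly with this monodromy), and then pushes this through the loop surgeries and the identification with $M\# g(S^2\times S^2)$. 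Your summand-permuting diffeomorphism ``restricting to the identity on $M$'' would additionally require knowing that the identification of Proposition \ref{Proposition Diffeo Ambient} places the components in standard position in distinct summands, which is not established anywhere.

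In the second bullet the same misreading reappears as ``$n-1$ remaining knot surgeries to undo'': there is only one, and it is not attached to any component. What actually has to be done, and what your sketch omits, is: (i) observe that $S_1$ lies in an $N^*\approx (T^2\times S^2)\#(S^2\times S^2)$ block disjoint from $\Gamma\setminus S_1$, so deleting $S_1$ frees one $S^2\times S^2$; (ii) apply the Mandelbaum--Gompf lemma (Lemma \ref{gompf mandlebaum}) twice, rel boundary, to dissolve the single knot surgery and to split off the fiber sum with $T^2\times S^2$ --- this is where the rel-pair control you correctly worry about is obtained, and it needs the non-spin input of Lemma \ref{spin states} as a hypothesis; and (iii) handle the residual framing ambiguity. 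Your guess for why an infinite subfamily is needed (crossing-change families enabling an Auckly-type argument) is not the actual reason: the surgery loops are independent of $K$ but their framings need not be, leaving at most four diffeomorphism types of framed pairs, and the subfamily $\mathcal{K}'$ comes from the pigeonhole principle. The concern about preserving distinct Alexander polynomials is vacuous since $\mathcal{K}'\subset\mathcal{K}$. The stabilization claim then follows by the same dissolving argument with an externally added $S^2\times S^2$, as in Remark \ref{stabilization}; that part of your outline is correct in spirit.
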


Related notions of Brunnianity of 2-links in 4-manifolds are available in the literature, always in analogy with the properties of Brunnian links in $S^3$ introduced by Brunn in 1892 \cite{[Bru92]}. The notion of Bruniannity used by Auckly-Kim-Melvin-Ruberman in \cite[Theorem B]{[AucklyKimMelvinRuberman]} is quite similar to ours albeit a bit stronger: their results address smooth isotopy by considering ordered links. We consider only equivalence and not isotopy between our surfaces, but do so irrespectively of their order. A stronger use of the adjective Brunnian is employed by Hayden-Kjuchukova-Krishna-Miller-Powell-Sunukjian in \cite{[HKKMPS]}.

We finish the introduction with a pair of questions that arose during the production of this work. The first question concerns a topological property of the 2-links constructed under our procedure. Notice that the 2-links (\ref{Unlink Main}) of Theorem \ref{Theorem A} with $F_g$ as 2-link group are smoothly linked: there are no $g$ smoothly embedded disjoint 3-balls in $M\#g(S^2\times S^2)$ such that each of them bounds a component of $\Gamma_K$. The locally flat embbeded realm imediately comes to mind. 

\begin{qst} Are the 2-links of Theorem \ref{Theorem A} with free 2-link group topologically unlinked? That is, are there locally flat embedded disjoint $3$-balls\begin{equation}\label{3-balls}\Theta_g = D^3_1\sqcup \cdots \sqcup D^3_g\end{equation}in $M\#g(S^2\times S^2)$ such that $\partial \Theta_K = \Gamma_k$ for any $K\in \mathcal{K}$? 
\end{qst}

The second question regards a comparison of the Brunnianity property of Definition \ref{Definition Brunnianly} and the Brunnian behaviour of the examples of Hayden-Kjuchukova-Krishna-Miller-Powell-Sunukjian.

\begin{qst} Are the 2-links of Theorem \ref{Theorem A} with free 2-link group Brunnian in the sense of \cite{[HKKMPS]}? That is, is $\Gamma_K \setminus S_{i,K}$ smoothly unlinked for every $i=1, \dots, g$?
\end{qst}

%It is straight-forward to modify Definition \ref{Definition Knotted Link} to include 2-links whose components are closed oriented surfaces of genus greater than zero and extend Theorem \ref{Theorem A} in this direction.  

%\begin{qst}\label{Question Generalized}To what extend is it possible to modify the procedure in Section \ref{Section Strategy} in order to produce exotic sets of 2-links whose components are closed oriented surfaces of genus at least one?
%\end{qst}

%A strategy to build 2-links whose components are 2-tori is outlined in Section \ref{Section Other Strategy}. Question \ref{Question Generalized} is already interesting in the single component case: to the best of our knowledge there are no examples of exotic nullhomologous surfaces of genus at least two in closed simply connected 4-manifolds in the literature (cf. \cite{[HoffmanSunukjian], [Torres]}).

%A result of Gompf \cite[Theorem]{[GompfS]} says that the inequivalent smooth structures of Item (4) in Theorem \ref{Theorem A} become diffeomorphic after taking the connected sum with $m$ copies of $S^2\times S^2$ for some $m\in \N$. A corollary of this is that the exotic 2-spheres of Theorem \ref{Theorem A} and \cite[Theorem A]{[Torres]} are smoothly equivalent in $M\# (m + 1)(S^2\times S^2)$.  This motivates the last question to be discussed in this introduction.

%\begin{qst} What is the minimal number of stabilizations with copies of $S^2\times S^2$ of the ambient 4-manifold such that the 2-links of Theorem \ref{Theorem A} become smoothly equivalent?
%\end{qst}

The paper is organized as follows. The construction procedure of infinite sets of exotic 2-links is laid down in Section \ref{Section Strategy}. A topological restriction on admissible 4-manifolds is given in Section \ref{Section Admissible Manifolds}. The main building blocks are produced in Section \ref{Section manifold W_g}, Section \ref{Section Kodaira Thurston} and Section \ref{Section Manifold N_g}. These sections contain results to pin down the diffeomorphism type of a 4-manifold constructed from surgeries that might be of independent interest, including a handlebody depiction of the Kodaira-Thurston manifold not previously available in the literature to the best of our knowledge. The smooth structures of Theorem \ref{Theorem A} are constructed in Section \ref{Section Smooth Structures}. The diffeomorphism type of the ambient 4-manifolds are pinned down in Section \ref{Section Diffeomorphism Type}. The 2-links are manufactured in Section \ref{Section TOPIsotopy}, where we also show that they are pairwise topologically isotopic. The properties of Definition \ref{Definition Brunnianly} are investigated in Section \ref{Section Brunnian}.

\subsection{Acknowledgements}
The first author is a member of GNSAGA – Istituto Nazionale di Alta Matematica ‘Francesco Severi’, Italy.

\section{Recipe for exotic 2-links.}\label{Strategies}

\subsection{Admissible 4-manifolds of Theorem \ref{Theorem A} and their size.}\label{Section Admissible Manifolds} The 4-manifolds that are our key raw material of our construction procedure and the hypothesis of Theorem \ref{Theorem A} are defined as follows. 

\begin{definition}\label{Definition Admissible Manifold}An admissible $4$-manifold $M$ is a $4$-manifold that satisfies the following properties.
\begin{itemize}\item There is at least one basic class $k\in H^2(M;\Z)$, i.e. the Seiberg-Witten invariant of $M$ satisfies $SW_M(k)\neq 0$.

\item There is a pair of embedded disjoint $2$-tori $T_1,T_2\subset M$ such that\begin{center}$T_i$ has self-intersection $0$ for $i = 1, 2$,\end{center} and \begin{center}$\pi_1(M) = \{1\} = \pi_1(M\setminus (\nu(T_1)\sqcup \nu(T_2))$.\end{center}

\end{itemize}

\end{definition}

There is no shortage of admissible $4$-manifolds in the literature \cite{[BaldridgeKirk2], [FintushelStern1], [Gompf2], [McCarthyWolfson]}. The elliptic surfaces $E(n)$ for $n\geq 2$ are a prototype example, but admissibe 4-manifolds with smaller second Betti number are known. In \cite[Theorem 18]{[BaldridgeKirk2]}, Baldridge-Kirk construct an admissible 4-manifold that is homeomorphic to $3\mathbb{CP}^2\#5\overline{\mathbb{CP}^2}$. The presence of the 2-tori $T_1$ and $T_2$ of Definition \ref{Definition Admissible Manifold} imposes certain restrictions on the second Betti number of an admissible 4-manifold, which are recorded in the following inequality (cf. \cite[Lemma 1]{[FintushelStern]}).

 \begin{lemma}\label{lemma inequality Betty number M}
     If $M$ is an admissible 4-manifold in the sense of Definition \ref{Definition Admissible Manifold}, then 
     \begin{equation}\label{eq: inequality betty number of M}
         b_2(M)\geq |\sigma(M)|+4 
     \end{equation} where $b_2(M)$ and $\sigma(M)$ are the second Betti number and the signature of $M$, respectively. In particular, $M$ has an indefinite intersection form. 
 \end{lemma}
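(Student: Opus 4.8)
The plan is to extract the inequality from the two structural hypotheses in Definition \ref{Definition Admissible Manifold}: the existence of a nonzero Seiberg-Witten basic class, and the presence of the two disjoint self-intersection-zero tori $T_1, T_2$ sitting in a simply connected manifold whose complement is also simply connected. The key homological input is that $T_1$ and $T_2$ span a rank-two subgroup of $H_2(M;\Z)$ on which the intersection form, suitably completed, contains a hyperbolic summand; combined with the fact that a closed 4-manifold carrying a nonvanishing Seiberg-Witten invariant cannot have a positive-definite (or negative-definite) intersection form, the bound on $b_2$ follows.

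First I would establish that the classes $[T_1],[T_2]\in H_2(M;\Z)$ are both nonzero and are not proportional, so that they span a rank-two subgroup $\Lambda \subset H_2(M;\Z)$. This uses the $\pi_1$ hypothesis: because $\pi_1(M\setminus(\nu(T_1)\sqcup\nu(T_2)))=1$, a meridian of each $T_i$ bounds in the complement, which by a standard Mayer-Vietoris / Alexander-duality argument forces each $[T_i]$ to be essential and in fact forces the existence of a class $G_i\in H_2(M;\Z)$ with $G_i\cdot [T_i]=1$ (a geometric dual coming from capping a Seifert-type surface for the meridian with a disk inside $\nu(T_i)$ followed by a surface in the complement). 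Since $T_1$ and $T_2$ are disjoint and the complement is simply connected, one arranges $G_i\cdot[T_j]=0$ and $G_i\cdot G_j=0$ for $i\neq j$, and $[T_i]\cdot[T_i]=0$, $[T_i]\cdot G_i = 1$. Thus the sublattice spanned by $[T_1],G_1,[T_2],G_2$ contains two orthogonal hyperbolic planes $H\oplus H$ as a direct summand of $(H_2(M;\Z)/\mathrm{torsion}, \cdot)$. By the algebra of unimodular forms this splits off, so $b_2(M)\geq 4$ and the remaining orthogonal complement $Q'$ satisfies $\sigma(M)=\sigma(Q')$ and $b_2(M)=4+\mathrm{rk}(Q')$.

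Next I would invoke the Seiberg-Witten obstruction: since $SW_M(k)\neq 0$ for some $k$, $M$ is not diffeomorphic to a manifold with definite intersection form — more precisely, a closed 4-manifold with $b_2^+=0$ or with $b_2^-=0$ has vanishing Seiberg-Witten invariants (for $b_2^+\leq 1$ one must be mildly careful about the chamber, but the presence of the hyperbolic summand $H\oplus H$ already guarantees $b_2^+\geq 2$ and $b_2^-\geq 2$, so no chamber subtleties arise). Hence $\mathrm{rk}(Q')$ already being arranged so that $b_2^\pm \geq 2$, we get that $Q'$ is not definite either way, which gives $\mathrm{rk}(Q')\geq |\sigma(Q')| $ trivially, hence $b_2(M) = 4 + \mathrm{rk}(Q') \geq 4 + |\sigma(Q')| = 4+|\sigma(M)|$. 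The "in particular" clause is immediate: $b_2^+(M)\geq 2$ and $b_2^-(M)\geq 2$ forces the form to be indefinite.

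The main obstacle I anticipate is the homological bookkeeping in the first step — showing carefully that the two tori, together with their geometric duals, really do span an orthogonal sum of two hyperbolic planes that is a \emph{direct summand} of the intersection lattice, rather than merely a finite-index subgroup. This is where the hypothesis $\pi_1(M\setminus(\nu T_1\sqcup\nu T_2))=1$ is doing real work: it is what produces sections/dual classes with intersection number exactly $1$ (not just nonzero), which is precisely what is needed for the hyperbolic summand to be primitive. Once that is in hand, the Seiberg-Witten input is a black box and the arithmetic is trivial. (Compare the analogous bookkeeping in \cite[Lemma 1]{[FintushelStern]}.)
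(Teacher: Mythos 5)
Your argument is correct and follows essentially the same route as the paper's proof: use the simple connectivity of $M\setminus(\nu(T_1)\sqcup\nu(T_2))$ to produce geometric duals for the two tori, split off the rank-four, signature-zero piece of the intersection form that these four classes span, and conclude from the universal inequality $\mathrm{rank}\geq|\sigma|$ applied to the orthogonal complement. Two minor remarks: the Seiberg--Witten hypothesis is not actually needed (that inequality holds for every symmetric form, definite or not, and the paper's proof never invokes $SW_M\neq 0$), and the ``main obstacle'' you flag about the sublattice being a primitive direct summand dissolves if, as the paper does, one works with $\R$-coefficients, where any nondegenerate subspace splits off orthogonally (and in any case a unimodular sublattice of $H_2(M;\Z)/\mathrm{torsion}$ automatically splits off).
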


\begin{proof}Since the complement $M \setminus (\nu(T_1) \sqcup \nu(T_2))$ is simply connected, each 2-torus $T_i$ has a geometrically dual surface $S_i$ for $i = 1, 2$. We define the linear subspace\begin{equation*}\mathcal{S}=\langle [T_1],[S_1],[T_2],[S_2]\rangle \subset H_2(M;\R)\end{equation*}
     and compute the intersection form on its generators to be the matrix
     \begin{equation*}\label{Matrix 1} \begin{bmatrix}
0 & 1 & 0 & 0 \\
1 & * & 0 & * \\
0 & 0 & 0 & 1 \\
0 & * & 1 & * 
\end{bmatrix}.  \end{equation*}
The matrix (\ref{Matrix 1}) is equivalent to 
$A=2\begin{pmatrix}
+1
\end{pmatrix} \oplus 2 \begin{pmatrix}
-1
\end{pmatrix}  $
via a real change of basis.
%\[ A'=\begin{bmatrix}
%1 & 0 & 0 & 0 \\
%0 & -1 & 0 & 0 \\
%0 & 0 & 1 & 0 \\
%0 & 0 & 0 & -1 
%\end{bmatrix}  \]
We can extend this new basis to a basis for $H_2(M;\R)$, so that, after possibly changing again coordinates, the intersection form $Q_M$ over $\R$ becomes
\begin{equation*}Q_M\cong  \begin{bmatrix}
A & 0 \\
0 & B 
\end{bmatrix} .
\end{equation*}
This yields the desired inequality\begin{equation*}|\sigma(M) |=|\sigma(Q_M)|=|\sigma(B)|\leq rank(B)=b_2(M)-4.\end{equation*}

 \end{proof}

 \iffalse %OLD PROOF--------------------------
 \begin{proof}
     Since $M$ is an admissible manifold, there are two 2-tori $T_1,T_2$ in $M$ as in Definition \ref{Definition Admissible Manifold}. Each of those 2-tori is non-trivial in homology, since the complement is simply connected. Moreover, the self-intersection of these 2-tori is $0$ by assumption, so $M$ can not have either positive or negative definite intersection form, so it is indefinite. \\
    Notice that the inequality (\ref{eq: inequality betty number of M}) is equivalent to having both $b_2^+(M)$ and $b_2^-(M)$ greater or equal than $2$, and by the previous paragraph we know that they have to be both greater or equal than $1$.\\
    Set $h=[T_1]$ and $\bar h=[T_2]$ in $H_2(M;\R)$ and notice that $h^2=\bar h^2=h\cdot \bar h=0$, since the $2$-tori have trivial self-intersection.
     Via some linear algebra computations, we can get that if the minimum between $b_2^+(M)$ and $b_2^-(M)$ is $1$, then $h$ and $\bar h$ are linearly dependent in $H_2(M;\R)$. By assumption, we can find an immersed 2-sphere that intersects $T_1$ transversely in exactly one point and that avoids the 2-torus $T_2$ and there exists a similar  2-sphere with reversed roles. This implies that $h$ and $\bar h$ have to be linearly independent, and so the   minimum between $b_2^+(M)$ and $b_2^-(M)$ is at least $2$.
 \end{proof}
\fi%--------------------------------------------------

\begin{remark}[Non-empty boundary and the size of admissible 4-manifolds] Hayden-Kjuchukova-Krishna-Miller-Powell-Sunukjian in \cite{[HKKMPS]} construct pairs of exotic properly embedded  2-disks in the 4-ball, where the adjective ‘exotic’ in this case means topologically isotopic rel. boundary and pairwise smoothly inequivalent (see \cite{[Hayden]} too). In order to distinguish the smooth structures of their complements, they make use of Stein structures and a well-known adjunction formula \cite[Theorem 11.4.7]{[GompfStipsicz]}; cf. \cite[Section 9.1]{[Akbulut2]}. In order to be able to distinguish infinite sets and not only pairs of exotic 2-links, we employ the Seiberg-Witten invariants instead and work with larger ambient 4-manifolds.
\end{remark}

\subsection{Strategy to construct exotic 2-links and prove Theorem \ref{Theorem A}}\label{Section Strategy} 

\begin{enumerate}[(a)]
\item Start with an admissible 4-manifold of Definition \ref{Definition Admissible Manifold} and perform Fintushel-Stern's knot surgery \cite{[Fintushel], [FintushelStern3]} along the 2-torus $T_1$ to produce an infinite set $\{M_K\mid K\in \mathcal{K}\}$ of pairwise non-diffeomorphic 4-manifolds that are homeomorphic to $M$. Notice that each one of these 4-manifolds contains a copy of the 2-torus $T_2$. 
\item\label{fiber} Build the generalized fiber sum \cite{[Gompf2]} $Z_{K} = M_K\#_{T^2} B_G$ of $M_K$ and a 4-manifold $B_G$. For an appropriate choice of the building block $B_G$, this yields an infinite set of pairwise non-diffeomorphic 4-manifolds in the homeomorphism type of $Z_{K}$ with fundamental group $G$. 
\item\label{surgeries} Perform surgery along $n$ loops $\mathcal{L}_G=\{\gamma_1, \dots, \gamma_n \}$ in $Z_{K}$ whose homotopy classes are the generators of the group $\pi_1(Z_{K}) = G$, and produce a closed simply connected 4-manifold $Z_{K}^\ast$ that is diffeomorphic to $M\#n(S^2\times S^2)$ for every $K\in \mathcal{K}$. This step can be traced back to Wallace \cite{[Wallace]} and Milnor \cite{[Milnor]}. 
\item The belt 2-spheres of the surgeries of Step (c) form a 2-link $\Gamma_{K}$ of $n$ nullhomotopic components smoothly embedded in $M\#n(S^2\times S^2)$ whose 2-link group is isomorphic to $G$. A result of Sunukjian \cite[Theorem 7.2]{[Sunukjian2]} implies that each component of $\Gamma_K$ is topologically unknotted.
\item Any two 2-links $\Gamma_{K}$ and $\Gamma_{K'}$ are smoothly inequivalent for any two different knots $K, K' \in \mathcal{K}$ since their complements\begin{equation}\label{Item St}M\#n(S^2\times S^2)\setminus \nu(\Gamma_K) \qquad \text{and}\qquad M\#n(S^2\times S^2)\setminus \nu(\Gamma_{K'})\end{equation} are non-diffeomorphic. This is proven indirectly using gauge theoretical invariants of the closed 4-manifolds $Z_K$ and $Z_{K'}$ of Item (b). 
\item The explicit nature of our constructions yields a homeomorphism of the complements (\ref{Item St}) that extends to a homeomorphism of pairs\begin{equation}(M\# n(S^2\times S^2), \Gamma_K)\rightarrow (M\# n(S^2\times S^2), \Gamma_{K'}),\end{equation}which induces the identity map on homology. Results of Quinn and Perron allows us to conclude that the $2$-links $\{\Gamma_K \mid K \in \mathcal{K} \}$ are topologically isotopic.
\item Surgery on one loop in \ref{surgeries}, instead of $n$ loops, is already enough to produce the same diffeomorphism type regardless of the knot $K$. This is used to show that giving up one componenent in each 2-link stabilizes the family (\ref{Unlink Main}).
\end{enumerate}

\section{Building blocks and auxiliary results.}\label{Section Results} 

We gather in this section the raw materials that are used in the proof of Theorem \ref{Theorem A} and record several of  their topological properties that are useful for our purposes. The following items are fixed in the sequel.
\begin{itemize}
    \item A natural number $g \in \N$. 
    \item A group $G \in \{ F_g, \pi_1(\Sigma_g) \}$ of rank $n$. 
    \item An infinite collection $\mathcal{K}$ of knots $K \subset S^3$ with pairwise distinct Alexander polynomials. For simplicity, we assume that the family $\mathcal{K}$ contains the unknot $U\subset S^3$. 
    \item An admissible manifold $M$ as in Definition \ref{Definition Admissible Manifold}.
\end{itemize}

\subsection{\label{Section manifold W_g} $T^2\times S^2\#2g(S^2\times S^2)$ as a result of surgery to $T^2\times \Sigma_g$}

%We describe in this section the building blocks used in the proof of Theorem \ref{Theorem A}.
We begin this section with a description of the building block $B_{\pi_1(\Sigma_g)} = T^2\times \Sigma_g$ (see step (b) of Section \ref{Section Strategy}), where $T^2$ is the 2-torus. To ease notation, we will call such building block $B_g$. 

We pick on the surfaces $T^2$ and $\Sigma_g$ simple closed curves $x,y$ and $a_i,b_i$ for $i=1,\dots,g$ as in Figure \ref{fig:T2xSigmag}. In particular, writing $T^2=S^1\times S^1$, the loop $x$ is $S^1\times \{1\}$ and the loop $y$ is $\{1\}\times S^1$. Moreover, we take as a parallel copy of $x$ the loop $x'=S^1\times \{-1\}$.  Notice that the loops $\{a_i,b_i \mid i=1, \dots,g \} $ form a symplectic basis for the first homology group $H_1(\Sigma_g; \mathbb{Z})$.

 \begin{figure}[h]
  \includegraphics[width= 0.8\textwidth]{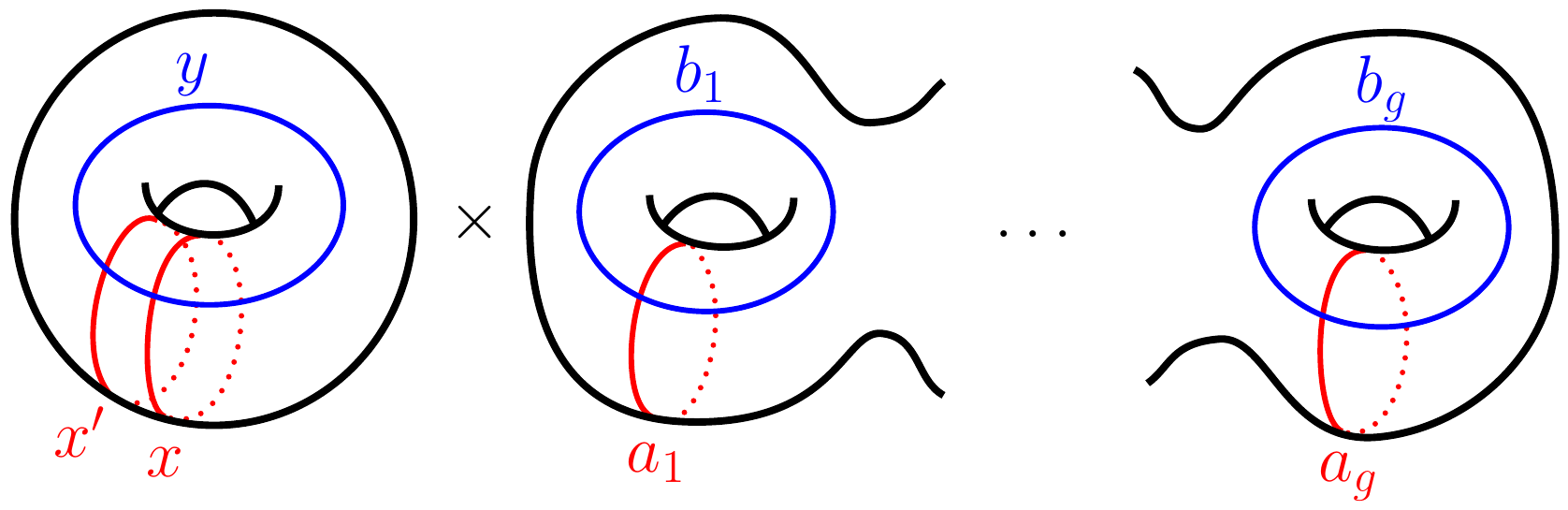} 
  \caption{The 4-manifold $T^2\times \Sigma_g$, the curves $x,x',y$ in $T^2$ and $a_i,b_i$ for $i=1,\dots,g$ in $\Sigma_g$. %The embedded tori $x\times a_i$ and $x'\times b_i$ for $i=1,\dots,g$.
  }
  \label{fig:T2xSigmag}
\end{figure}
On the other hand, the surfaces $T^2\times \{p\}$, $\{p\}\times \Sigma_g$ and the collections of $2$-tori
\begin{equation}\label{SetToriA}
    \{x\times a_i\mid i=1,\dots,g\},
\end{equation}
\begin{equation}\label{SetToriB}
    \{x'\times b_i\mid i=1,\dots,g\},
\end{equation}and
\begin{equation}\label{SetToriC}
    \{y\times a_i\mid i=1,\dots,g\}\text{ and } \{y\times b_i\mid i=1,\dots,g\}
\end{equation}generate the second homology group $H_2(T^2 \times \Sigma_g; \Z ) = \Z^{2+4g}$. %Furthermore, the 2-tori (\ref{SetToriA}),(\ref{SetToriB}) and (\ref{SetToriC}) are Lagrangian submanifolds with respect to the product symplectic structure on $T^2\times\Sigma_g$.

The second building block is a $4$-manifold $B_g^*$ that is obtained from $T^2\times \Sigma_g$ by doing loop surgery along the framed components of the $1$-dimensional submanifold
 \begin{equation}\label{Set of loops gamma}
 \mathcal{L}_{\pi_1(\Sigma_g)}=\gamma_1 \sqcup \gamma_1' \sqcup \ldots \sqcup \gamma_g \sqcup \gamma_g' \subset T^2\times \Sigma_g
 \end{equation} 
where $\gamma_i$ is the loop $\{p\}\times a_i$ and $\gamma_i' $ is $\{p'\}\times b_i$
 %\begin{equation} \pi_1(T^2 \times \Sigma_g) = \langle x, y : [x, y ] = 1\rangle \times \langle a_1, b_1, \ldots, a_g, b_g : \overset{g}{\underset{i = 1}\prod}[a_i, b_i] = 1\rangle .\end{equation}
 and the framing is the one induced by the product structure of $T^2 \times \Sigma_g$. %in the disjoint union \ref{Set of loops gamma}
 %is defined as follows. % that generate its second homology group. 
 %We pick the points $p$ and $p'$ on the loops $x$ and $x'$ so that the curves $\gamma$ and $\gamma'$ are contained in the 2-tori $x\times a_i$ and $ x'\times b_i$. These are the disjoint Lagrangian 2-tori in (\ref{SetToriA}) and (\ref{SetToriB}) with respect to the product symplectic structure on $T^2\times \Sigma_g$. We frame the loops $\gamma_i$ and $\gamma_i'$ with the induced framing from the corresponding $2$-torus.
 
 In particular, we get $B_g^*$ as \begin{equation}\label{Auxiliary Manifold 2}B^\ast_g = (T^2\times \Sigma_g) \setminus \bigsqcup_{i=1}^g (\nu(\gamma_i)\sqcup \nu(\gamma'_i))\cup \bigsqcup_{i=1}^{2g} (D^2\times S^2).\end{equation}
  Notice that doing surgery on the loops (\ref{Set of loops gamma}) kills the subgroup\begin{equation*}\{1\}\times \pi_1(\Sigma_g) <\pi_1(T^2\times \Sigma_g)=\pi_1(T^2)\times \pi_1(\Sigma_g).\end{equation*} 

 %The homology classes of the surfaces $T^2\times \{pt\}$ and $\{pt\}\times \Sigma_g$ and those of the collections of disjoint 2-tori\begin{equation}\label{Set Tori 1}\{x\times a_i: 1\leq i \leq g\}\end{equation}and\begin{equation}\label{Set Tori 2}\{x'\times b_i: 1\leq i \leq g\}\end{equation}generate the second homology group $H_2(T^2\times \Sigma_g) = \Z^{2 + 4g}$. The 2-tori (\ref{Set Tori 1}) and (\ref{Set Tori 2}) are Lagrangian submanifolds when $T^2\times \Sigma_g$ is equipped with the product symplectic form.
 
We also build an auxiliary 4-manifold $\widehat{B}_g$ as the result of a performing a total of $2g$ torus surgeries on the framed 2-tori (\ref{SetToriA}) and (\ref{SetToriB}). More explicitly, this 4-manifold is defined as\begin{equation}\label{Auxiliary Manifold 1} \widehat{B}_g = (T^2\times \Sigma_g) \setminus (\bigsqcup_{i=1}^g (\nu(x\times a_i) \sqcup \nu(x' \times b_i))\cup_\phi \bigsqcup_{i=1}^{2g} (T^2\times D^2))\end{equation}where $\phi$ is a gluing diffeomorphism\begin{equation*}\phi:\bigsqcup_{i=1}^g\partial\nu(x\times a_i)\cup\partial\nu(x'\times b_i)\to \bigsqcup_{i=1}^{2g} (T^2\times \partial D^2)\end{equation*} that satisfies
\begin{equation*}
    (\phi|_{\partial\nu(x\times a_i)})^{-1}_*([\{p\}\times \partial D^2])=[l_i]%=[\{pt\}\times a_i]
    \in H_1(\partial\nu(x\times a_i);\Z)
\end{equation*} and a similar equation for the 2-tori of the form $x'\times b_i$ for any $i=1,\dots,g$. Here $l_i$ is a Lagrangian push off of the loop $\gamma_i$ into $\partial\nu(x\times a_i)$. 

This cut-and-paste construction is known as a multiplicity zero log transform along the loops $\gamma_i$ and $\gamma_i'$ \cite[p. 83]{[GompfStipsicz]}. We now briefly explain a trick due to Moishezon \cite[Lemma 13]{[Moishezon]}, which will play a key role in Lemma \ref{Lemma Diffeomorphism 3} and Lemma \ref{Lemma Diffeomorphism 2}. In general, the transformation of a $T^2\times D^2$ under a multiplicity zero log transform is depicted in Items (A) and (B) in Figure \ref{fig:Argument}, where we use the dotted circle notation for 1-handles \cite{[Akbulut0]}, \cite{[CdeSa1], [CdeSa2]}, \cite[Section 5.4]{[GompfStipsicz]}. Item (A) depicts a copy of $T^2\times D^2$ inside a smooth 4-manifold $X$, where all handles that go through the two dotted circles and the 0-framed circle are ignored. Item (B) depicts what replaces the copy of $T^2\times D^2$ when the zero log transform is performed, yielding the manifold $\widehat{X}$ \cite[Figure 6.9]{[Akbulut2]}, \cite[Figure 8.25]{[GompfStipsicz]}; all other handles are ignored. Item (C) depicts the same area after performing a loop surgery on $X$, we call the result $X^*$. Notice that $X^*$ can also be obtained by performing a loop surgery on $\widehat{X}$. The trick is to split a loop surgery into a multiplicity zero log transform followed by a different loop surgery.

%\begin{figure}  \includegraphics[width= 150mm]{}  \caption{The Kodaira-Thurston manifold} % \label{fig:Seifert surface for a $(p, q)$-torus knot (van Wijk-Cohen)}\end{figure}

 \begin{figure}[h]
     \centering
     \begin{subfigure}[b]{0.26\textwidth}
         \centering
         \includegraphics[width=\textwidth]{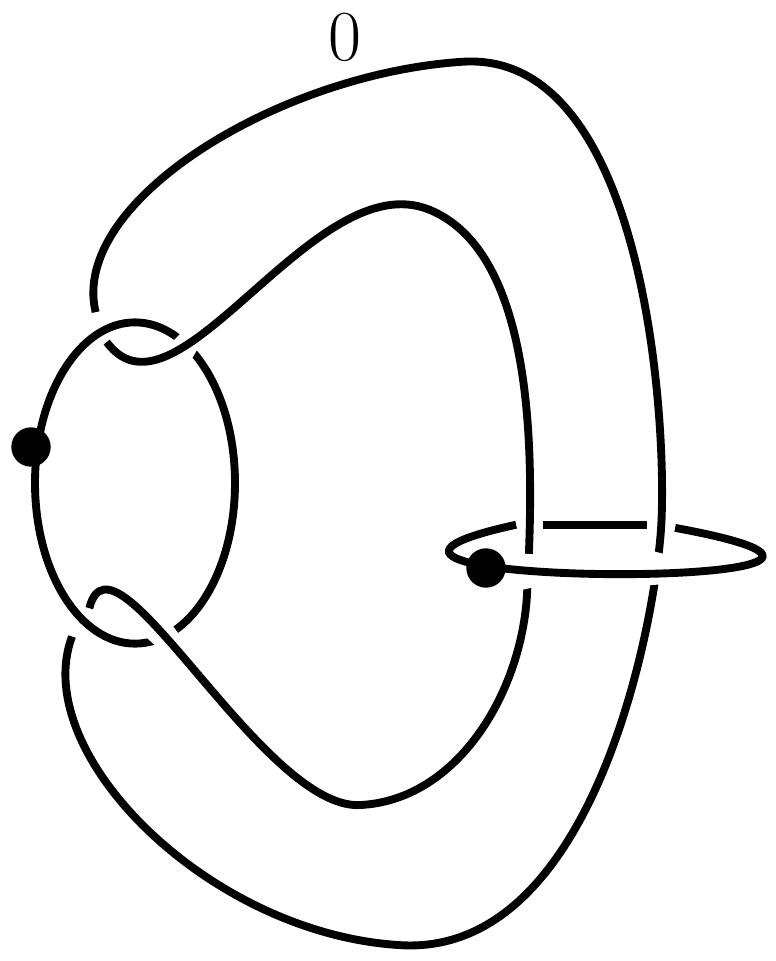}
         \caption{}
         \label{}
     \end{subfigure}
     \hfill
     \begin{subfigure}[b]{0.26\textwidth}
         \centering
         \includegraphics[width=\textwidth]{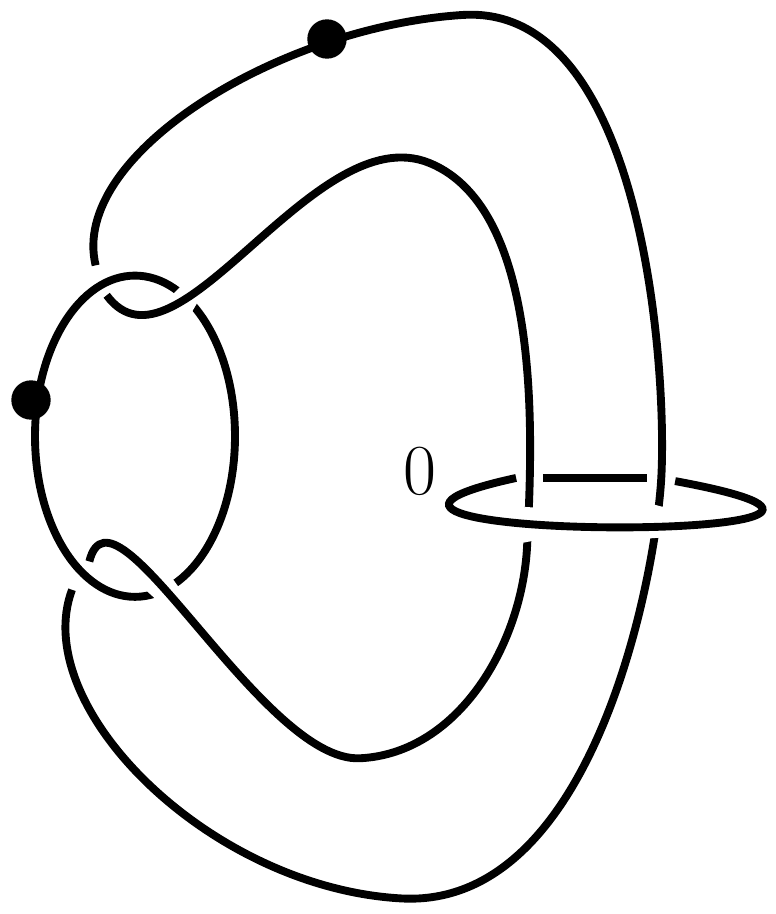}
         \caption{}
         \label{}
     \end{subfigure}
     \hfill
     \begin{subfigure}[b]{0.26\textwidth}
         \centering
         \includegraphics[width=\textwidth]{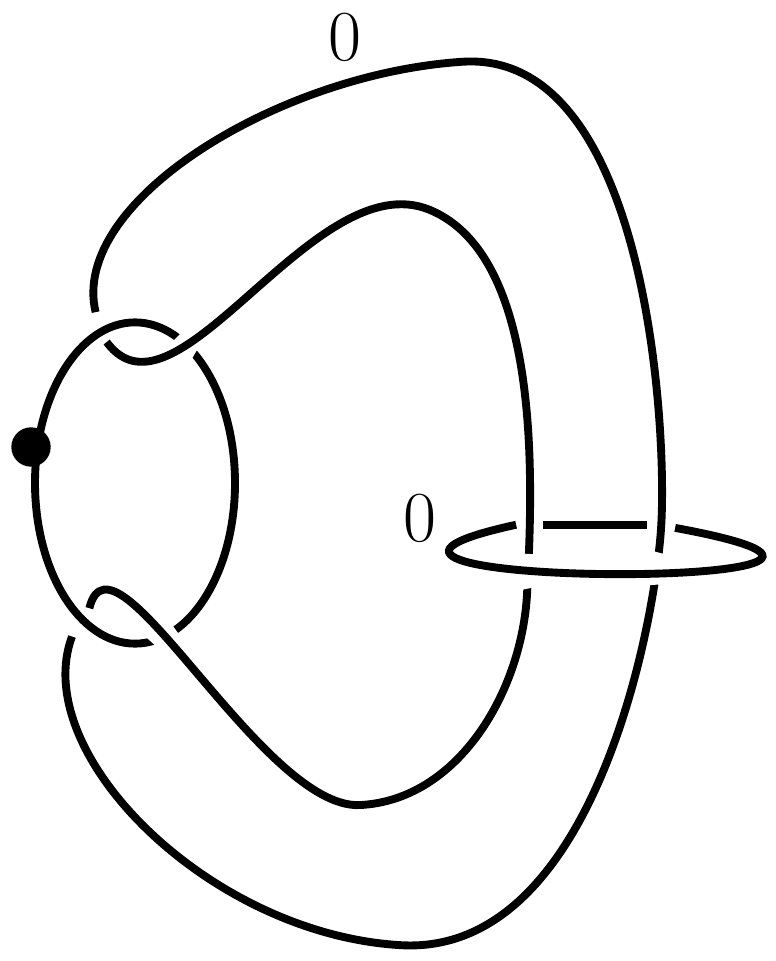}
         \caption{}
         \label{}
     \end{subfigure}
        \caption{A trick due to Moishezon \cite{[Moishezon]} as explained by Gompf in \cite[Proof of Lemma 3]{[Gompf]}}
        \label{fig:Argument}
\end{figure}

The 2-torus $T^2\times\{p\}$, equipped with the framing induced by the product structure in $T^2\times \Sigma_g$, is disjoint from the family of 2-tori (\ref{SetToriA}) and (\ref{SetToriB}), and it defines a framed 2-torus embedded in $\widehat{B}_g$ and one in $B_g^*$. We denote both of them by $T$. 

We now identify the diffeomorphism type of (\ref{Auxiliary Manifold 2}). 

\begin{lemma}\label{Lemma Diffeomorphism 3}Let $\widehat{B}_g$ be the 4-manifold in (\ref{Auxiliary Manifold 1}). There is a diffeomorphism\begin{equation}\label{Diffeomorphism 1'}\widehat{B}_g\approx T^2\times S^2.\end{equation}
 Moreover, through this diffeomorphism the framed 2-torus $T\subset\widehat{B}_g$ is mapped to the framed 2-torus $T^2\times\{p\}\subset T^2\times S^2$ equipped with its standard product framing.
 
Let $B^\ast_g$ be the 4-manifold defined in (\ref{Auxiliary Manifold 2}). There is a diffeomorphism\begin{equation}\label{Diffeomorphism 2'}B^\ast_g\approx (T^2\times S^2)\# 2g(S^2\times S^2).\end{equation}
Moreover, through this diffeomorphism the framed 2-torus $T\subset B^*_g$ is mapped to the canonical 2-torus $T^2\times\{p\}\subset (T^2\times S^2)\# 2g (S^2\times S^2)$ equipped with its product framing.
\end{lemma}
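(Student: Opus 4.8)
The plan is to prove the two diffeomorphisms in turn, deriving the second from the first by a handle-calculus bookkeeping argument. For the first diffeomorphism $\widehat{B}_g \approx T^2 \times S^2$, I would start from the product $T^2 \times \Sigma_g$ and recall that $\Sigma_g$ is obtained from $S^2$ by attaching $g$ one-handles (equivalently, $\Sigma_g$ is surgery on $S^2$ along $g$ trivial pairs of points). Crossing with $T^2$, this exhibits $T^2 \times \Sigma_g$ as $T^2 \times S^2$ with $2g$ torus surgeries performed — but in the ``wrong'' direction. The content of the lemma is that doing the \emph{multiplicity-zero log transform} on the $2g$ framed tori $x\times a_i$ and $x'\times b_i$ with the Lagrangian framing of the statement undoes these surgeries. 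Concretely, I would set up the handlebody picture of $T^2\times D^2$ (two dotted circles and a $0$-framed $2$-handle as in Figure~\ref{fig:Argument}(A)), observe that in $T^2\times\Sigma_g$ each neighborhood $\nu(x\times a_i)$ sits in this standard form with the curve $a_i$ running over one of the handles generating $\pi_1(\Sigma_g)$, and then apply Moishezon's trick: the log transform replaces Item~(A) by Item~(B) of Figure~\ref{fig:Argument}. A direct handle slide/cancellation in Item~(B) — the new $2$-handle cancels one of the dotted $1$-handles — shows that each such replacement geometrically ``reverses'' the corresponding $\Sigma_g$-handle, so that after all $2g$ log transforms one is left with $T^2\times S^2$. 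Tracking the canonical torus $T = T^2\times\{p\}$ through these local modifications, which all take place away from a neighborhood of $T$, shows it is carried to $T^2\times\{p\}\subset T^2\times S^2$ with the product framing.

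For the second diffeomorphism, the key observation (this is precisely the Moishezon/Gompf trick recalled in the paragraph around Figure~\ref{fig:Argument}) is that a loop surgery along $\gamma_i$ equals a multiplicity-zero log transform along $x\times a_i$ followed by a \emph{different} loop surgery — compare Items (A), (B), (C) of Figure~\ref{fig:Argument}: $X^*$ is obtained from $X$ by a loop surgery, and equally from $\widehat X$ by a loop surgery. Applying this simultaneously to all $g$ pairs, the manifold $B_g^*$, which by definition (\ref{Auxiliary Manifold 2}) is $T^2\times\Sigma_g$ with loop surgeries on the $\gamma_i,\gamma_i'$, is diffeomorphic to $\widehat B_g$ with $2g$ loop surgeries performed on curves lying in the replacement regions. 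By the first part $\widehat B_g \approx T^2\times S^2$, and I would check that under this diffeomorphism the $2g$ curves in question become null-homotopic loops in $T^2\times S^2$, each lying in a ball (they are meridional-type circles created in passing from Item~(A) to Item~(B)). Surgery on a null-homotopic framed loop in a $4$-manifold $Y$ yields $Y\#(S^2\times S^2)$ or $Y\#(S^2\tilde\times S^2)$ depending on the framing; here the framing is the one specified in (\ref{Auxiliary Manifold 2}), which I would verify gives the untwisted $S^2\times S^2$ summand. Hence $B_g^*\approx (T^2\times S^2)\#2g(S^2\times S^2)$. Since all $2g$ loop surgeries occur in a region disjoint from $T^2\times\{p\}$, the canonical torus is again preserved with its product framing.

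The main obstacle I expect is the framing bookkeeping: the multiplicity-zero log transform is only ``zero'' with respect to a chosen framing (the Lagrangian push-off $l_i$ in the statement), and one must be careful that this is exactly the framing making Moishezon's local picture valid, and correspondingly that the residual loop surgeries carry the product framing so that one gets $S^2\times S^2$ summands rather than twisted $S^2\tilde\times S^2$ summands or extra connect-summands of $\mathbb{CP}^2$-type. A secondary, more routine, obstacle is making the Figure~\ref{fig:Argument} handle moves precise: one must exhibit the explicit handlebody diagram of $T^2\times\Sigma_g$ near each $\nu(x\times a_i)$, perform the replacement, and carry out the cancelling handle slides, all while keeping the ambient (ignored) handles and the torus $T$ fixed. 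I would organize the write-up by first proving a single-pair local lemma (the effect of one log transform, and of the log-transform-then-surgery decomposition, in a model $T^2\times D^2$), then apply it $g$ times in parallel, and finally reconcile with the global manifolds $\widehat B_g$ and $B_g^*$ and track $T$.
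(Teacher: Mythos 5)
Your overall strategy is sound and, for the second diffeomorphism, essentially coincides with the paper's: decompose each loop surgery via Moishezon's trick into a multiplicity-zero log transform followed by a residual loop surgery on a nullhomotopic curve in $\widehat{B}_g\approx T^2\times S^2$, so that $B_g^*$ is $(T^2\times S^2)\#2g(S^2\times S^2)$ or its twisted variant. Where you differ is in how you settle the twisted-versus-untwisted question: you propose to chase the framing of the residual loops directly, whereas the paper sidesteps framings entirely by a parity argument --- a basis of surfaces for $H_2(T^2\times\Sigma_g;\Z)$ chosen away from the loops survives into $B_g^*$ and still spans (unimodularity plus $b_2(B_g^*)=4g+2$), so the intersection form of $B_g^*$ is even and the summands must be $S^2\times S^2$. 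The paper's route is more robust, since the direct framing chase through the Moishezon local model is exactly the delicate point you flag as your main obstacle. For the first diffeomorphism you also take a genuinely different route: you want to run the handle calculus in dimension four on the local models $\nu(x\times a_i)$, arguing that the log transforms ``undo'' the handles building $\Sigma_g$ from $S^2$; the paper instead splits off a circle factor, writes $\widehat{B}_g=S^1\times Y$ with $Y$ the result of $2g$ zero-framed Dehn surgeries on $S^1\times\Sigma_g$ along $\{1\}\times a_i$ and $\{-1\}\times b_i$, and identifies $Y\approx S^1\times S^2$ by Kirby calculus on a bounding 4-manifold. The dimensional reduction buys a cleaner computation and makes tracking $T=S^1\times l$ and its framing transparent. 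Your version should work but note that crossing the handle decomposition of $\Sigma_g$ with $T^2$ produces $g$ surgeries on \emph{pairs} of tori ($T^2\times S^0$-surgeries), not $2g$ individual torus surgeries on the $x\times a_i$ and $x'\times b_i$, so the claim that the log transforms literally reverse those handles is not automatic and needs the same handle computation the paper performs.

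One point you pass over too quickly: for the second statement about $T\subset B_g^*$, disjointness of the surgery loci from $T$ is not by itself enough to conclude that the final diffeomorphism carries $T$ to the canonical torus with its product framing. One must arrange the connected-sum decomposition so that the $2g$ new summands appear away from $T$; the paper does this by showing the residual loops $\bar c_i$ are nullhomotopic in $T^2\times(S^2\setminus\{p\})$, hence can be ambiently isotoped into an $S^4$ connect-summand while fixing a neighborhood of $T^2\times\{p\}$, after which the surgeries take place entirely inside that $S^4$. You should add this (or an equivalent) step to your write-up.
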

\begin{proof} We first prove the existence of the diffeomorphism (\ref{Diffeomorphism 1'}). Write $T^2\times \Sigma_g$ as $S^1\times \partial (D^2\times \Sigma_g)$, where the $S^1$ factor corresponds to the loop $x$. Use this splitting to view each 4-dimensional torus surgery as a (1 + 3)-dimensional surgery. In particular, the resulting manifold is $\widehat{B}_g = S^1\times Y$, where the 3-manifold $Y$ is obtained from $S^1\times \Sigma_g$ by performing 0-Dehn surgeries around the $2g$ loops $\{1\}\times a_i$ and $\{-1\}\times b_i$ for $i=1,\dots,g$. This is equivalent to adding $2g$ 4-dimensional 2-handles with $0$ framing along the same loops in $D^2\times \Sigma_g$ to get a 4-manifold $Z$ bounded by $Y$ as in Figure \ref{fig:D2xSigmag}. From Figure \ref{fig:Computation}, after some handle slides, handle cancellations and isotopies, we conclude that $Z$ is diffeomorphic to $D^2\times S^2$. In particular, $Y$ is diffeomorphic to $S^1\times S^2$ and the existence of the diffeomorphism (\ref{Diffeomorphism 1'}) follows. The torus $T$ in $S^1\times Y$ is $S^1\times l$, where $l$ is the meridian of the black $0$-framed 2-handle in Figure \ref{fig:D2xSigmag}. Notice that the handle slides in Figure \ref{fig:Computation} do not move the loop $l$. Under the diffeomorphism $Y\approx S^1\times S^2$, the loop $l$ corresponds to the loop $S^1\times \{p\}$. Therefore, the 2-torus $T$ is mapped to $S^1\times (S^1\times \{p\})$ and, after checking the framing of $l$, we conclude that the first part of the lemma holds.%since the framing of the 2-torus $T$ can be represented by the $0$ framing of the loop $l$ then also the framings correspond as claimed in this lemma.

To show the existence of the diffeomorphism (\ref{Diffeomorphism 2'}); we use the Moishezon trick, described above, to break down the surgery along the loops $\gamma_i,\gamma_i'$ by first doing $2g$ multiplicity zero log transforms along the tori $x\times a_i$ and $x'\times b_i$, and then $2g$ loop surgeries along some framed loops $c_1,\dots,c_{2g}$ which are nullhomotopic since $\pi_1(B_g^*) \cong \pi_1 (\widehat{B}_g)$. The manifold $B_g^*$ is the result of $2g$ surgeries along the loops $c_i$ in $\widehat{B}_g = T^2\times S^2$ and it is therefore diffeomorphic to either $\widehat B_g\# 2g (S^2\times S^2)$ or $\widehat B_g\# 2g(S^2\tilde\times S^2)$ \cite[Section 5.2]{[GompfStipsicz]}. To check that the diffeomorphism type is the former, it is sufficient to see that the intersection form of $B^*_g$ is isomorphic to the intersection form of $T^2\times\Sigma_g$, which is even. To see this, consider a set of $4g+2$ surfaces representing a basis for $H_2(T^2\times \Sigma_g;\mathbb{Z})$ and call $Q$ the intersection matrix of this basis. Assume, in addition, that these surfaces are away from the loops (\ref{Set of loops gamma}), hence surgery on such loops leaves the surfaces intact. This yields a set of $4g+2$ surfaces in $B^*_g$, whose intersections give the matrix $Q$, which is unimodular. This, together with the fact that $b_2(B^*_g)=4g+2$, implies that these surfaces must be a basis for $H_2(B^*_g; \mathbb{Z})$, on which the intersection matrix is again $Q$.

%This is indeed the case, given that the second homology group $H_2(B_g^*)$ in both cases is isomorphic to $H_2(T^2\times \Sigma_g)$. Furthermore, any set of surfaces that represent the generators of $H_2(T^2\times \Sigma_g)$ can be taken to be disjoint from the loops \ref{Set of loops gamma}, and they also represent generators for $H_2(B_g^*)$; here the unimodularity of the intersection matrix of these surfaces plays an important role.

We now argue that the framed 2-torus $T\subset B_g^*$ is mapped to canonical 2-torus inside $(T^2\times S^2)\# 2g (S^2\times S^2)$. The diffeomorphism (\ref{Diffeomorphism 1'}) maps the framed loops $c_1,\dots, c_{2g} \subset \widehat{B_g}$ to framed loops $\bar c_1\dots \bar c_{2g} \subset T^2\times S^2$. The loops $\bar c_i$ are nullhomotopic and disjoint from $T^2\times \{p\}$, since the loops $c_i$ are already nullhomotopic and disjoint from $T$ inside $\widehat{B}_g$. Moreover, the loops $\bar c_i$ are nullhomotopic also in $T^2\times (S^2\setminus\{p\})$, given that the inclusion map $T^2\times (S^2\setminus\{p\})\to T^2\times S^2$ induces an isomorphism between fundamental groups. Therefore, each simple loop $\bar c_i$ bounds a smooth 2-disk disjoint from $T^2\times \{p\}$. We can write $T^2\times S^2$ as $(T^2\times S^2)\# S^4$ and, by a smooth ambient isotopy of $T^2\times S^2$, we can move one by one all the loops $\bar c_i$ into the $S^4$ factor keeping fixed a neighborhood of the 2-torus $T^2\times \{p\}$.
In this way the diffeomorphism of tuples
\begin{equation*}
    (\widehat{B}_g,T,c_1,\dots,c_{2g})\approx  ((T^2\times S^2)\#S^4,T^2\times\{p\},\bar c_1,\dots,\bar c_{2g})
\end{equation*}
is inducing a diffeomorphism of pairs\begin{equation}\label{eq: following the torus T from W_g*}
    (B_g^*,T)\approx  ((T^2\times S^2)\#(S^4)^*,T^2\times\{p\})
    %\approx  (T^2\times S^2\#2g(S^2\times S^2),T^2\times\{pt\})
\end{equation}
where $(S^4)^*$ is the 4-manifold obtained as the result of $2g$  surgery operations along the framed loops $\bar c_1,\dots, \bar c_{2g}$ in $S^4$. By the previous paragraphs we know that $(S^4)^*$ is diffeomorphic to the connected sum of $2g$ copies of $S^2\times S^2$, and we can conclude the proof of this lemma by composing the diffeomorphism (\ref{eq: following the torus T from W_g*}) with a last diffeomorphism between $(T^2\times S^2)\#(S^4)^*$ and $(T^2\times S^2)\#2g(S^2\times S^2)$, which is the identity on a neighborhood of the 2-torus $T^2\times\{p\}$.
\end{proof}

 \begin{figure}[h]
     \centering
     \includegraphics[width=0.5\textwidth]{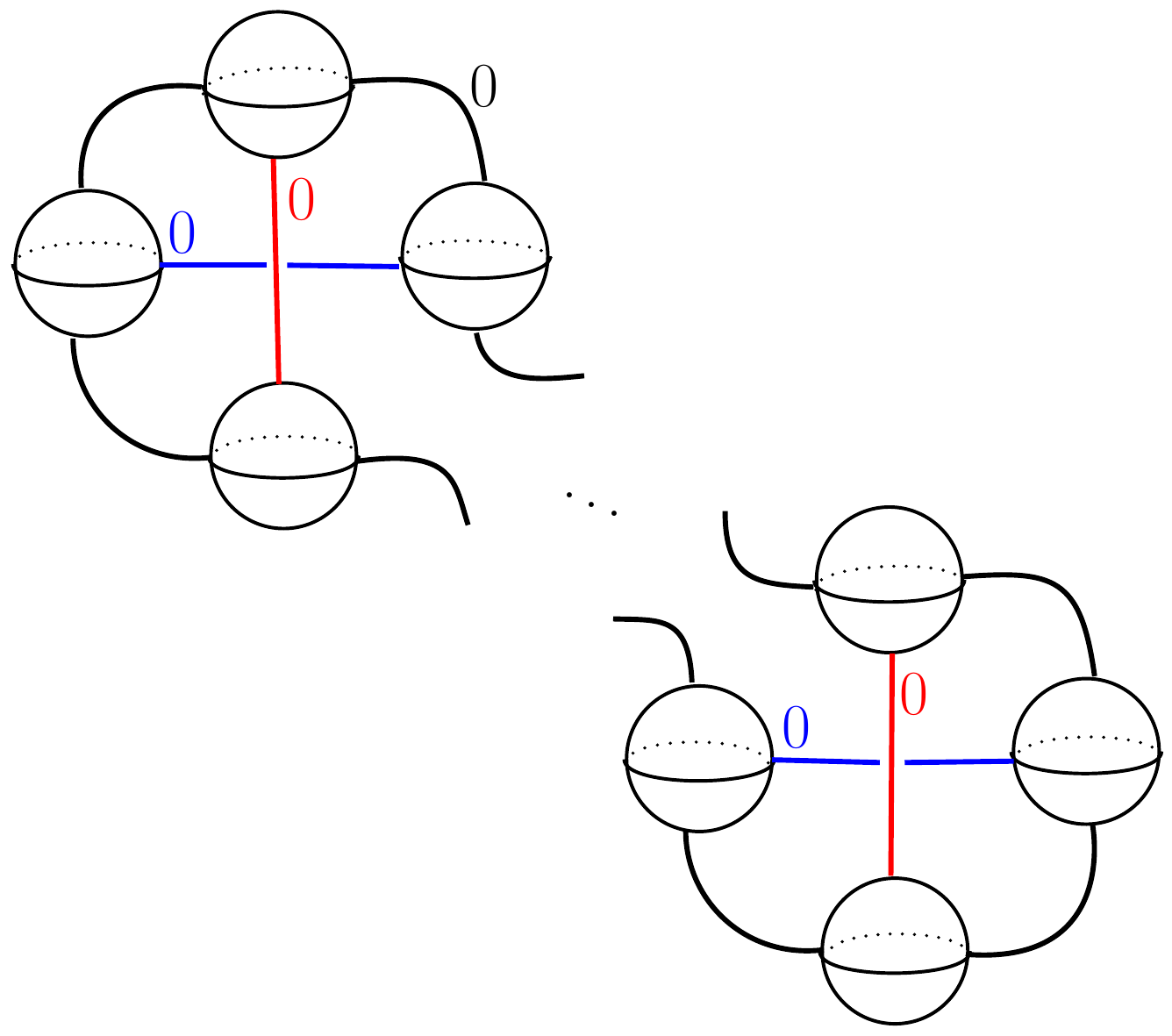}
     \caption{The 1-handles and the black $0$ framed 2-handle represent the disk bundle $D^2\times \Sigma_g$ over the orientable surface of genus $g$. The red loops are the loops $\{1\}\times a_i$ and the blue ones are the loops $\{-1\}\times b_i$. The manifold $Z$ is obtained from  $D^2\times \Sigma_g$ by adding the $0$ framed blue and red 2-handles. The boundary of the 4-manifold $Z$ is  the 3-manifold $Y$.}
     \label{fig:D2xSigmag}
 \end{figure}
 \begin{figure}[h]
     \centering
     \begin{subfigure}[b]{0.3\textwidth}
         \centering
         \includegraphics[width=\textwidth]{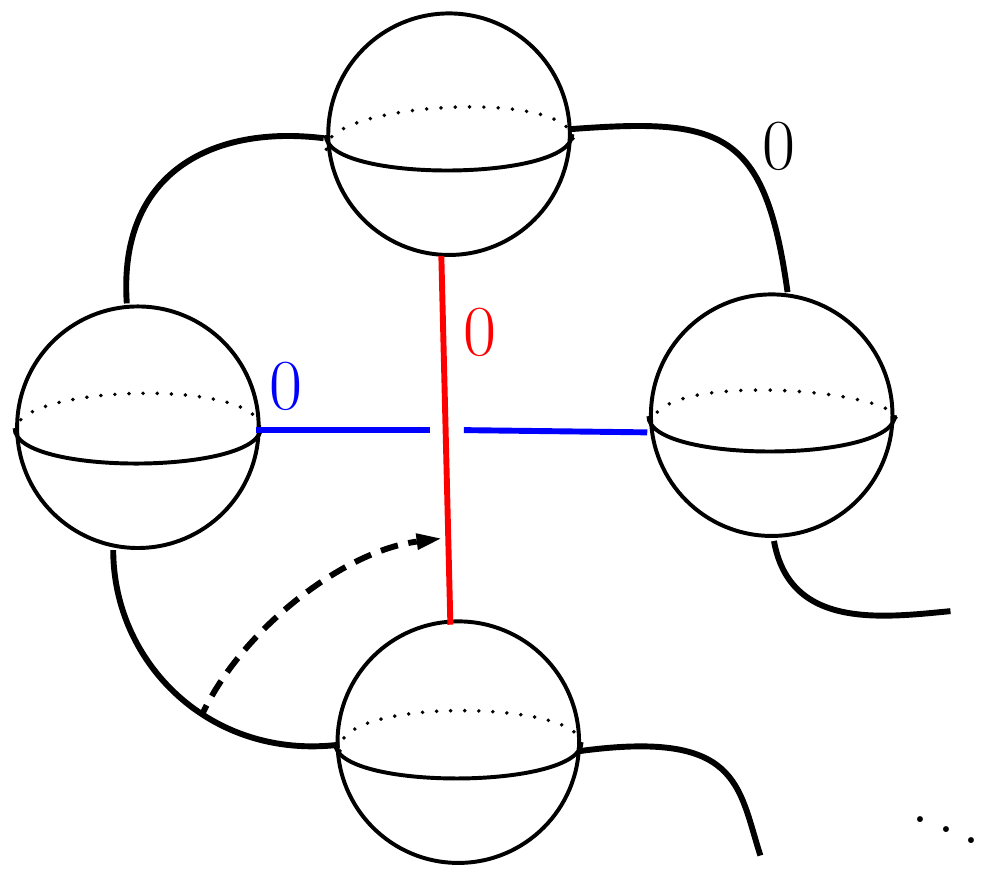}
         \caption{}
         \label{}
     \end{subfigure}
     \hfill
     \begin{subfigure}[b]{0.3\textwidth}
         \centering
         \includegraphics[width=\textwidth]{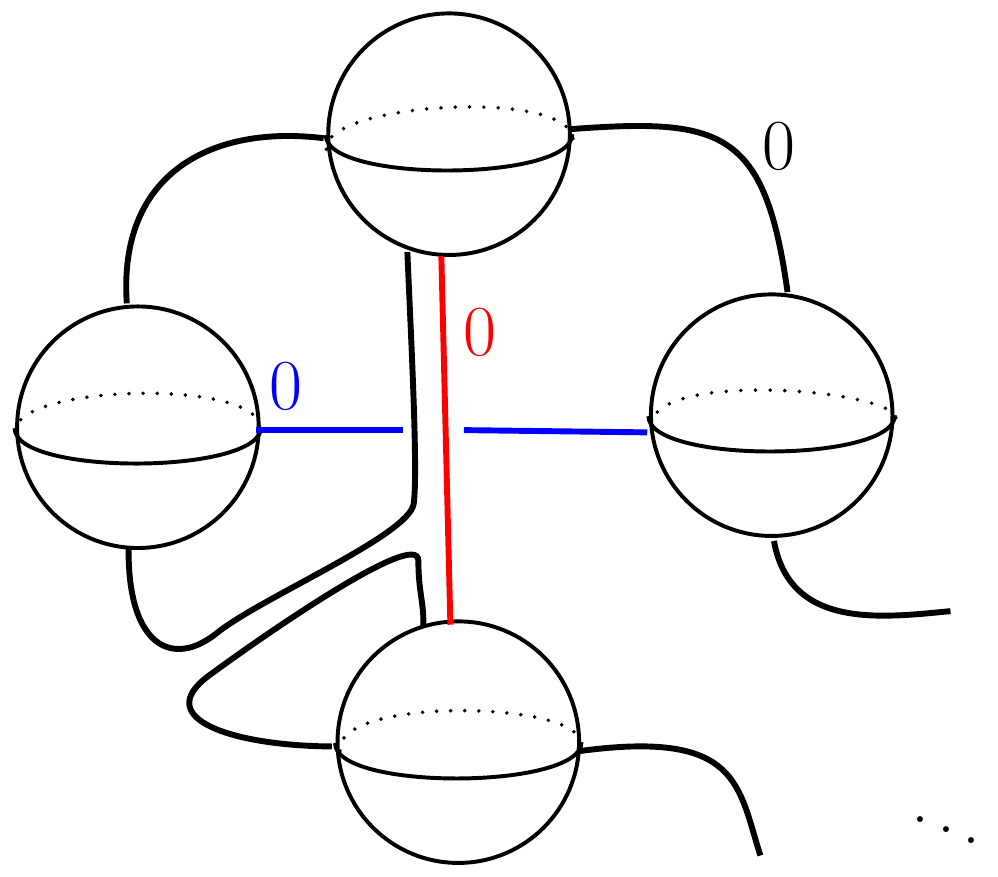}
         \caption{}
         \label{}
     \end{subfigure}
     \hfill
     \begin{subfigure}[b]{0.3\textwidth}
         \centering
         \includegraphics[width=\textwidth]{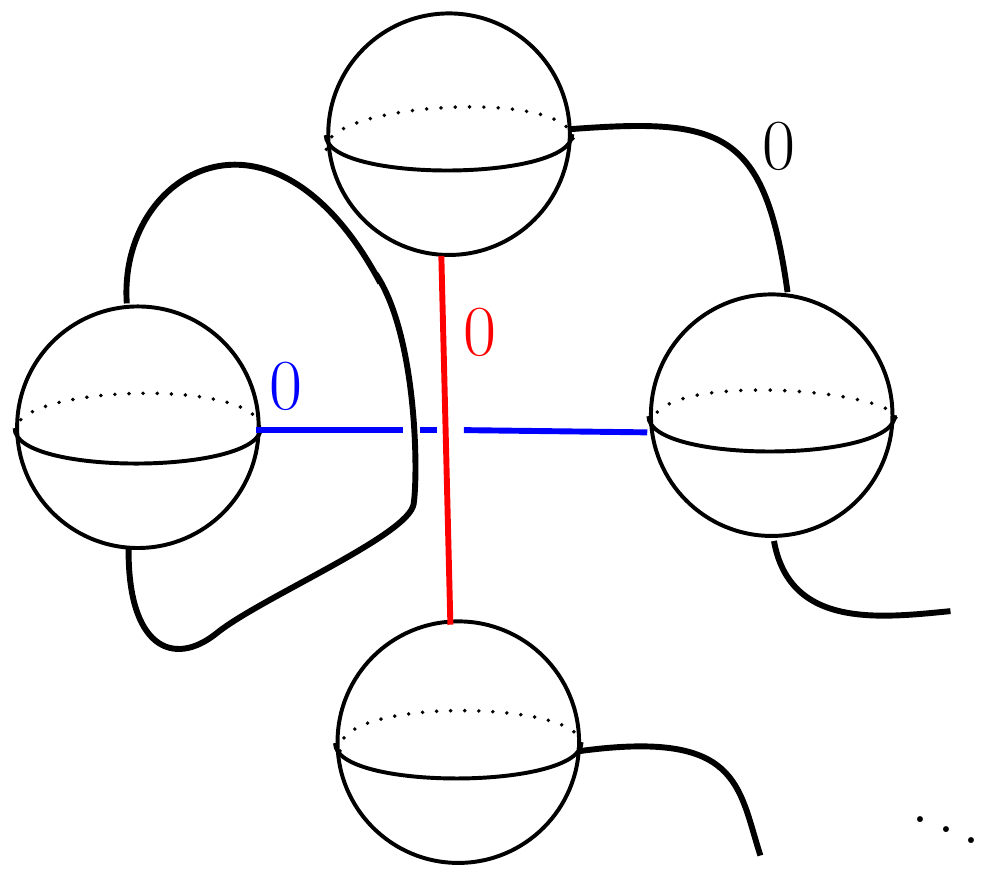}
         \caption{}
         \label{}
     \end{subfigure}
      \hfill
     \begin{subfigure}[b]{0.3\textwidth}
         \centering
         \includegraphics[width=\textwidth]{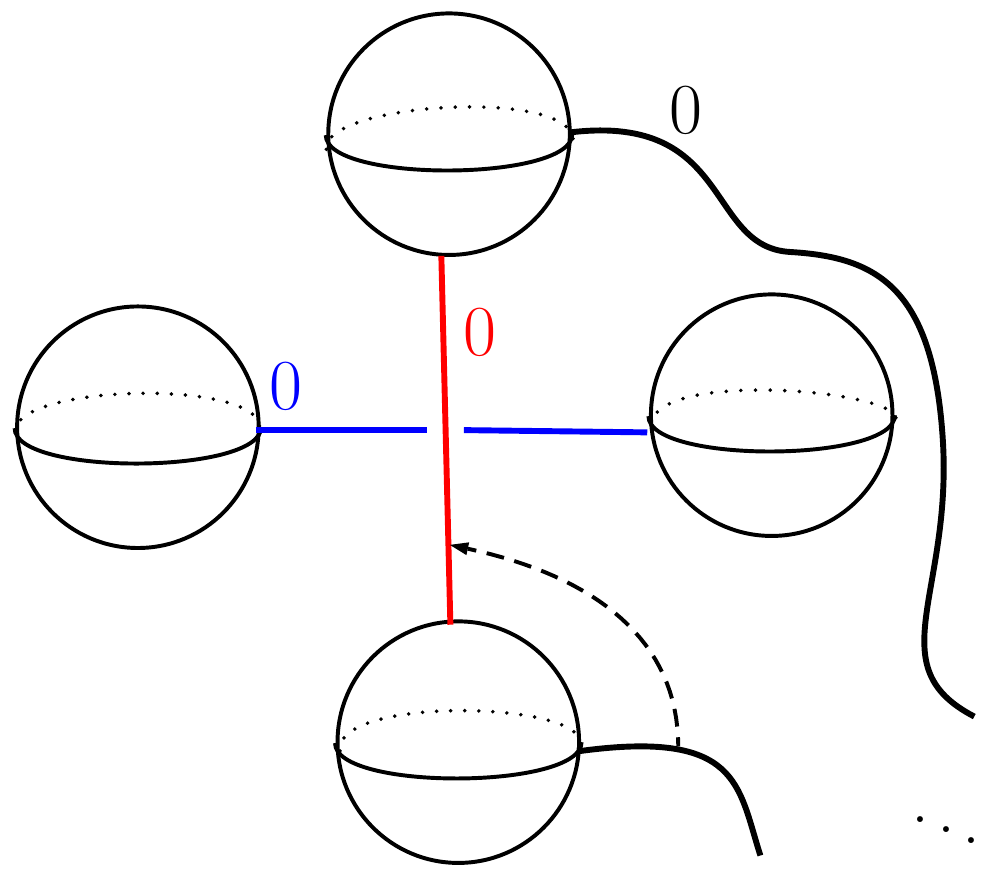}
         \caption{}
         \label{}
     \end{subfigure}
     \hfill
     \begin{subfigure}[b]{0.3\textwidth}
         \centering
         \includegraphics[width=\textwidth]{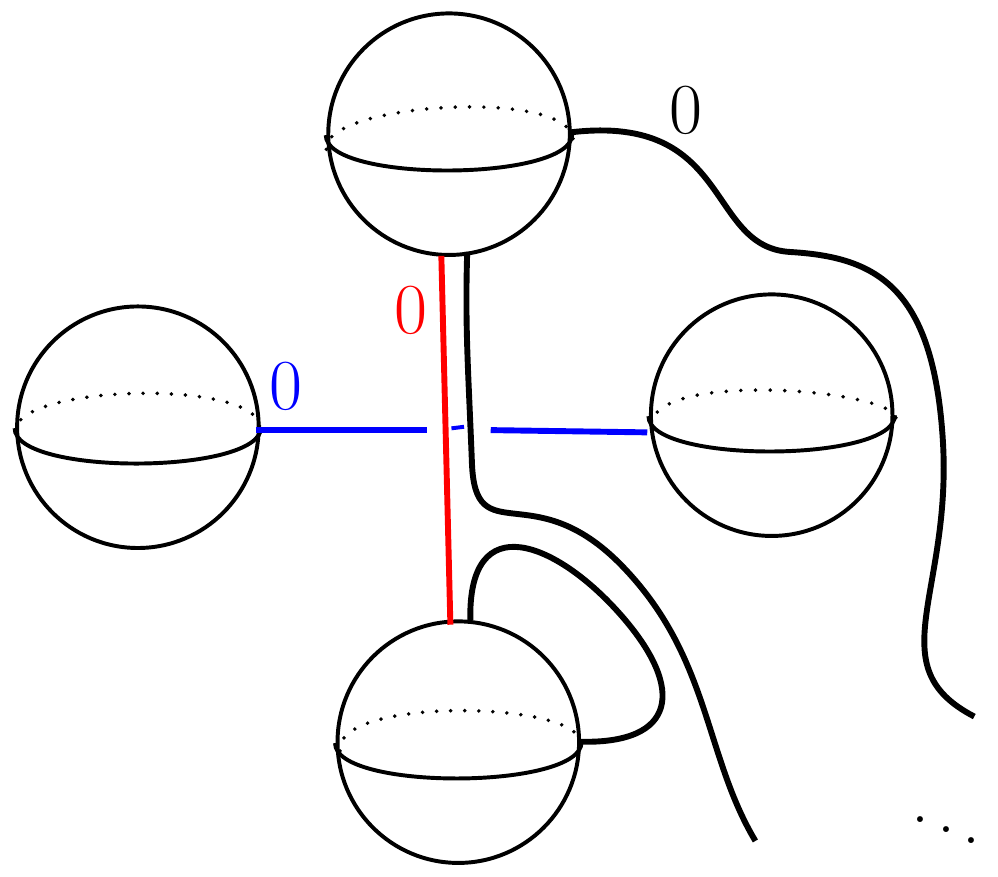}
         \caption{}
         \label{}
     \end{subfigure}
       \hfill
     \begin{subfigure}[b]{0.3\textwidth}
         \centering
         \includegraphics[width=\textwidth]{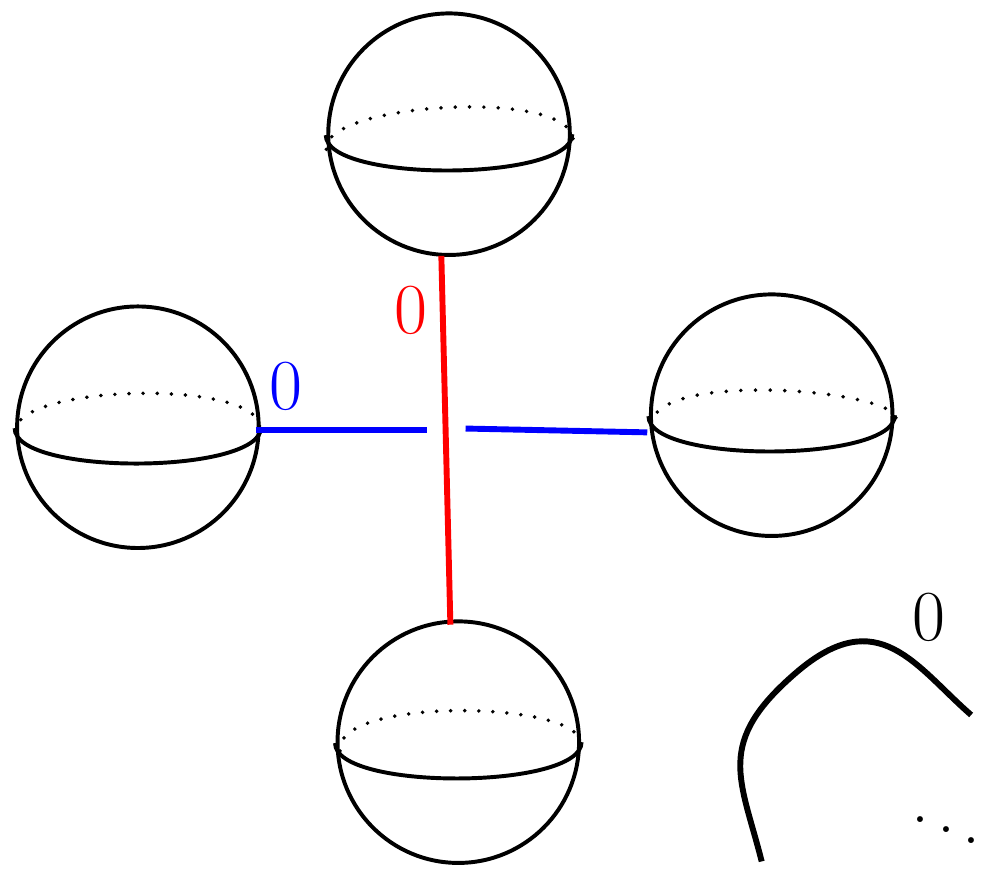}
         \caption{}
         \label{}
     \end{subfigure}

        \caption{A handlebody of the manifold $Z$ in the proof of Lemma 4. The dotted arrows indicate the 2-handle slide performed. If we apply $g$ times this sequence of moves to the handlebody depicted in Figure \ref{fig:D2xSigmag}, we see that the manifold $Z$ has $2g$ cancelling pairs of  1- and 2-handles, along with  a $0$ framed unknotted $2$-handle. Thus, $Z$ is diffeomorphic to $D^2\times S^2$. }
        \label{fig:Computation}
\end{figure}

%The proof of Lemma \ref{Lemma Diffeomorphism 2} is an immediate extension of the proof of Lemma \ref{Lemma Diffeomorphism 1} and it is therefore ommited.
%\begin{remark}
%The case $g = 2$ can be argued as follows. The handlebody of $T^2\times \Sigma_2$ is drawn in \cite[Figure 4.9]{[Akbulut2]}. Add a 0-framed 2-handle that links once each of the four 1-handles. Handle slides and handle cancellations reveal a diffeomorphism $W^\ast \approx T^2\times S^2\# 4(S^2\times S^2)$. 
%\end{remark}

\subsection{$(T^2\times S^2)\#(S^2\times S^2)$ as a result of surgery to the Kodaira-Thurston manifold}\label{Section Kodaira Thurston} We now describe the Kodaira-Thurston manifold $N$, an essential building block for our constructions in the case $G=F_g$. It is the total space of a $T^2$-bundle over $T^2$ \cite[Section 2.4]{[BaldridgeKirk]}, \cite[Section 2]{[BaldridgeKirk2]} which can also be seen as a product $N = S^1\times Y_{D_a}$  between the $1$-sphere $S^1$ and the mapping torus $Y_{D_a}$ of a Dehn twist $D_a:T^2\rightarrow T^2$ around the loop $a:= S^1 \times \{ p \} \subset T^2=S^1 \times S^1$. For convenience, we also define the auxiliary loop $b:=\{ p' \} \times S^1\subset T^2$. In particular, the $3$-manifold $Y_{D_a}$ is the total space of a $T^2$-bundle over a circle $y$ and its first homology group is $H_1(Y_{D_a}; \mathbb{Z}) = \Z b\oplus \Z y$. Notice that the loop $a$ is homologically trivial in $Y_{D_a}$. Moreover, we write the $T^2$-bundle over $T^2$ structure of $N =  S^1\times Y_{D_a} = x\times Y_{D_a}$ as\begin{equation}\label{KT Loops}a\times b\hookrightarrow N\rightarrow x\times y\end{equation} where our notation emphasizes the loops contained in the 2-torus fiber and the 2-torus section that generate the first homology group $H_1(N; \mathbb{Z}) = \Z x\oplus \Z y\oplus \Z b$. Note that the fiber 2-torus $a\times b$ is geometrically dual to the 2-torus section $T:= x\times y$. Moreover, there is another pair of geometrically dual 2-tori inside $N$, that we denote by $x\times b$ and $y\times a$ respectively, which are disjoint from both the fiber and the section; see \cite[Section 2]{[BaldridgeKirk]} for a detailed description of these submanifolds. 

%A four-dimensional version of the previous description, simultaneously depicts the Kodaira-Thurston manifold as being obtained from the 4-torus by applying a pair of 2-torus surgeries; see \cite{[GompfStipsicz], [BaldridgeKirk]} for details on torus surgeries and Akbulut's handlebody picture of $N$ in \cite{[Akbulut1]}. In order to fix the framing for surgeries, 
We equip $N$ with a symplectic structure as in for which the 2-torus $x\times b\subset N$ is Lagrangian and use the unique Lagrangian framing when we perform surgery along this submanifold; this is explained in detail in \cite[Section 2.1]{[BaldridgeKirk]}, \cite{[HoLi]}. Let $m$ and $l$ be the Lagrangian pushoffs of $x$ and $b$, respectively, and let $\mu$ be the meridian of $x\times b$, i.e., a curve isotopic to $\{p\}\times \partial D^2\subset \partial \nu(x\times b)$. The triple $\{m, l, \mu\}$ is a basis for the group $H_1(\partial \nu(x\times b); \mathbb{Z})$; see \cite[Section 2.1]{[BaldridgeKirk]} for further details. We now define the $4$-manifold\begin{equation}\label{Auxiliary Manifold}\widehat{N} = (N\setminus \nu(x\times b)) \cup_{\varphi_0} (T^2\times D^2),\end{equation} by using a diffeomorphism\begin{equation}\label{Choice Diffeo}\varphi_0: \partial \nu(x\times b)\rightarrow T^2\times \partial D^2\end{equation} satisfying the condition $\varphi_0(l)_\ast = [\{p\}\times \partial D^2]\in H_1(T^2 \times \partial D^2; \Z)$. In other words, $\widehat N$ is obtained from $N$ by applying a multiplicity zero log transform.

We identify the diffeomorphism type of $\widehat N$ and of another useful $4$-manifold obtained by the Kodaira-Thurston manifold via loop surgery in the following lemma.

\begin{lemma}\label{Lemma Diffeomorphism 1}Let $\widehat{N}$ be the 4-manifold in (\ref{Auxiliary Manifold}) % that is obtained from the Kodaira-Thurston manifold $N$ by performing a torus surgery on $x\times b$ with the Lagrangian framing along the loop $b$ using the gluing map (\ref{Choice Diffeo}).
There is a diffeomorphism\begin{equation}\label{Diffeomorphism 1}\hat{N}\approx T^2\times S^2.\end{equation}
Let $N^\ast$ be the 4-manifold that is obtained from the Kodaira-Thurston manifold by performing surgery along the based loop $b$ with respect to the framing induced by the Lagrangian framing on $x\times b\subset N$. There is a diffeomorphism\begin{equation}\label{Diffeomorphism 2}N^\ast\approx (T^2\times S^2)\# (S^2\times S^2).\end{equation}
Moreover, the framed 2-torus $T\subset N$ is disjoint from the loop $b$, so it defines a framed 2-torus  $T\subset N^*$ which is mapped through the diffeomorphism (\ref{Diffeomorphism 2}) to the canonical 2-torus $T^2\times\{p\}\subset (T^2\times S^2)\# (S^2\times S^2)$ equipped with its product framing.

\end{lemma}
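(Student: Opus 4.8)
\textbf{Proof proposal for Lemma \ref{Lemma Diffeomorphism 1}.}

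The plan is to follow exactly the template established in the proof of Lemma \ref{Lemma Diffeomorphism 3}, exploiting that $N = S^1 \times Y_{D_a}$ splits off an $S^1$ factor. First I would establish the diffeomorphism (\ref{Diffeomorphism 1}). Writing $N = x \times Y_{D_a}$ with the $x = S^1$ factor the one appearing in the Lagrangian torus $x\times b$, the multiplicity zero log transform along $x\times b$ can be viewed as an $S^1$-family of a $(1+3)$-dimensional surgery, so that $\widehat{N} = x \times Y'$, where $Y'$ is obtained from $Y_{D_a}$ by $0$-Dehn surgery along the loop $b$ with respect to the (pushoff of the) Lagrangian framing. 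Since $Y_{D_a}$ is a $T^2$-bundle over the circle $y$ with fiber $a\times b$ and $b$ is a fiber curve, $0$-surgery on $b$ in $Y_{D_a}$ yields $S^1\times S^2$: I would see this via the handlebody picture, drawing $Y_{D_a}$ as $0$-surgery on the Borromean-type link description of a $T^2$-bundle over $S^1$ and checking that adding the $0$-framed $2$-handle dual to $b$ produces a cancelling pair together with one of the dotted circles, leaving an unknotted $0$-framed $2$-handle and one dotted circle, i.e.\ $S^1\times S^2$. Hence $\widehat{N} \approx S^1\times S^1\times S^2 = T^2\times S^2$. Tracking the section $T = x\times y$: it is disjoint from $\nu(x\times b)$ and, under the identification $Y' \approx S^1 \times S^2$, the loop $y$ maps to an $S^1\times\{p\}$ factor, so $T = x\times y$ is carried to $T^2\times\{p\}$; a quick framing check (the Lagrangian framing on $x\times b$ induces the product framing on the section side) finishes the first part.

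Next I would handle (\ref{Diffeomorphism 2}) using the Moishezon trick exactly as in Lemma \ref{Lemma Diffeomorphism 3}. Surgery on the based loop $b$ with the framing induced by the Lagrangian framing on $x\times b$ can be split as: first the multiplicity zero log transform along $x\times b$ producing $\widehat{N} \approx T^2\times S^2$, then a single loop surgery along a framed loop $c \subset \widehat{N}$ which is nullhomotopic because $\pi_1(N^*) \cong \pi_1(\widehat{N})$ (both are obtained from $N$ by killing the same normal subgroup generated by $b$). Therefore $N^*$ is obtained from $T^2\times S^2$ by a single surgery on a nullhomotopic loop, hence is diffeomorphic to either $(T^2\times S^2)\#(S^2\times S^2)$ or $(T^2\times S^2)\#(S^2\tilde\times S^2)$. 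To rule out the twisted summand I would argue as before by a parity/intersection-form computation: a basis of $H_2(N;\Z)$ represented by surfaces disjoint from the loop $b$ survives the surgery, giving $4$ classes in $H_2(N^*;\Z)$ with the same intersection matrix; since $b_2(N^*) = b_2(N) + 2 = 6$ and these four classes together with the two new sphere classes are unimodular, the intersection form of $N^*$ is $Q_N \oplus Q$ with $Q$ unimodular of rank two, and since $Q_N$ is even the form is even, forcing the untwisted summand $S^2\times S^2$.

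For the last sentence, I would note that the torus $T = x\times y$ is disjoint from the loop $b$ (it lies in the section, away from $\nu(x\times b)$), so it survives into $N^*$. Then, exactly as in the closing paragraph of the proof of Lemma \ref{Lemma Diffeomorphism 3}: the nullhomotopic loop $c \subset \widehat{N} \approx T^2\times S^2$ is disjoint from $T = T^2\times\{p\}$ and is nullhomotopic already in $T^2\times(S^2\setminus\{p\})$ (the inclusion inducing a $\pi_1$-isomorphism), so it bounds a smooth disk disjoint from $T^2\times\{p\}$; writing $T^2\times S^2 = (T^2\times S^2)\#S^4$ and pushing $c$ into the $S^4$ summand by an ambient isotopy fixing a neighborhood of $T^2\times\{p\}$, the surgery on $c$ is realized inside the $S^4$ summand, giving a diffeomorphism of pairs $(N^*, T) \approx ((T^2\times S^2)\#(S^4)^*, T^2\times\{p\})$ with $(S^4)^* \approx S^2\times S^2$, and composing with a diffeomorphism of $(S^4)^*$ with $S^2\times S^2$ that is the identity near the connected-sum region completes the proof. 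The main obstacle is the first handlebody computation identifying $0$-surgery on the fiber curve $b$ in the mapping torus $Y_{D_a}$ with $S^1\times S^2$ and simultaneously keeping honest track of the section loop $y$ and the framings through that computation; the rest is a faithful transcription of the arguments already carried out for $\widehat{B}_g$ and $B^*_g$.
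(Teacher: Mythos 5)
Your proposal follows the same route as the paper's second argument for this lemma: split off the $S^1$ factor to identify $\widehat{N}$ as $S^1\times(\text{a }3\text{-manifold})\approx T^2\times S^2$, use the Moishezon trick to present $N^*$ as surgery on a nullhomotopic loop in $\widehat{N}$, rule out the twisted bundle by an intersection-form parity argument, and track the torus $T$ exactly as in the closing paragraph of the proof of Lemma \ref{Lemma Diffeomorphism 3}. All of that matches the paper. (The paper also gives an independent first proof via an explicit Kirby diagram of $N^*$ built from Akbulut's handlebody of $T^4$ and Luttinger surgery; you do not attempt that, which is fine.)

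However, the step that rules out $(T^2\times S^2)\#(S^2\tilde\times S^2)$ is wrong as written. You assert $b_2(N^*)=b_2(N)+2=6$; in fact $b_2(N^*)=b_2(N)=4$. The surgery is along the loop $b$, which has infinite order in $H_1(N;\Z)=\Z x\oplus\Z y\oplus\Z b$, so the surgery drops $b_1$ and $b_3$ each by one while raising $\chi$ by two, leaving $b_2$ unchanged (note that $6$ would already contradict the target $(T^2\times S^2)\#(S^2\times S^2)$, which has $b_2=4$). More seriously, even granting your count, the inference ``the form is $Q_N\oplus Q$ with $Q$ unimodular of rank two, and since $Q_N$ is even the form is even'' is a non sequitur: the parity of the unknown rank-two piece $Q$ is exactly what distinguishes $S^2\times S^2$ from $S^2\tilde\times S^2$, so you cannot conclude evenness from the evenness of $Q_N$ alone. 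The correct argument is the one actually used for $B_g^*$ in Lemma \ref{Lemma Diffeomorphism 3}, which you say you are transcribing: since $b_2(N^*)=4$, the four surfaces representing a basis of $H_2(N;\Z)$ and chosen disjoint from the surgery locus give classes in $H_2(N^*;\Z)$ with the same unimodular (and even) intersection matrix $Q_N\cong H\oplus H$, hence they already form a basis of $H_2(N^*;\Z)$ and $Q_{N^*}\cong H\oplus H$ is even, forcing the untwisted stabilization. With that correction the rest of your outline goes through.
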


\begin{figure}
  \includegraphics[width= 0.8\textwidth]{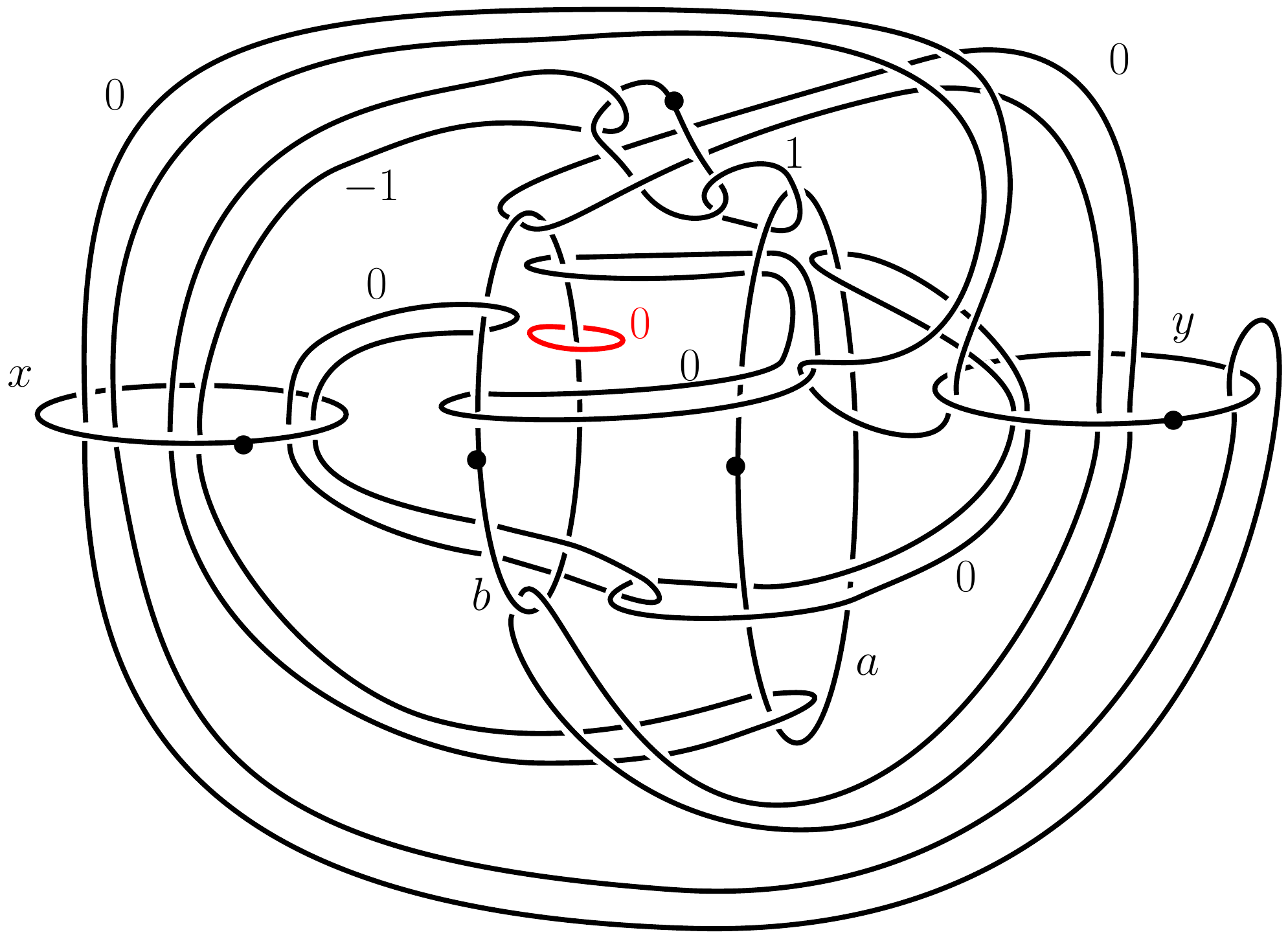} %[width= 150mm]{promise.pdf}
  \caption{Handlebody depiction of the Kodaira-Thurston manifold $N$ and the 4-manifold $N^\ast$.}% By removing the $0$ framed 2-handle in blue we have the handlebody of the Kodaira-Thurston manifold $N$. These handlebodies are completed by three and four 3-handles respectively and one 4-handle. }
 \label{fig: handlebody N*}
\end{figure}

We present two proofs of the lemma for the convenience of the reader.

%A proof of Lemma \ref{Lemma Diffeomorphism 1} appeared in \cite{[Torres]}. We include such proof as well as a second proof using handlebody calculus for the convenience of the reader. 

\begin{proof} Given the relevance of the 4-manifold $N^\ast$ in the proof of Theorem \ref{Theorem A}, we begin with a proof of the existence of the diffeomorphism (\ref{Diffeomorphism 2}). The handlebody of the 4-manifold $N^\ast$ is given in Figure \ref{fig: handlebody N*}, where its three 3-handles and its 4-handle are not drawn. The existence of the diffeomorphism (\ref{Diffeomorphism 2}) is seen by first using the 0-framed circle that links once the second dotted circle from left to right in Figure \ref{fig: handlebody N*} to unlink all other attaching spheres of the 2-handles from this dotted circle. Cancel this 1-handle and 2-handle pair. Straight-forward handle slides unlink the diagram even further, and two more 1- and 2-handle cancellations yield a handlebody of $(T^2\times S^2)\#(S^2\times S^2)$.

In order to draw this handlebody of $N^\ast$, we build heavily on work of Akbulut to first draw a Kirby diagram of the Kodaira-Thurston manifold. Equip the 4-torus $T^4 = x\times y\times a\times b$ with the product symplectic form. The Kodaira-Thurston manifold $N$ is obtained from the 4-torus $T^4$ by applying one Luttinger surgery to the Lagrangian 2-torus $x\times a$ along the Lagrangian pushoff of $a$ \cite{[AurouxDonaldsonKatzarkov]}. We use this description of $N$ in order to draw its handlebody by building on the handlebody of the 4-torus drawn in \cite[Figure 4.5]{[Akbulut2]} and the depiction of Luttinger surgery \cite[Section 6.4]{[Akbulut2]}. The handlebody of the Kodaira-Thurston manifold is given in Figure \ref{fig: handlebody N*} without the 0-framed 2-handle that links the second dotted circle from left to right along with four 3-handles and a 4-handle; cf. \cite[Figure 14.35]{[Akbulut2]}. 

The second argument to prove the lemma is as follows. The existence of the diffeomorphism (\ref{Diffeomorphism 1}) follows from a standard argument; see Baldridge-Kirk in \cite[Proof Lemma 2]{[BaldridgeKirk2]}. The 4-manifold $\widehat{N}$ is diffeomorphic to the product $S^1\times M^3$ of the circle with a 3-manifold $M^3$ that is obtained from the 3-torus $T^3 = y\times a\times b$ by applying a $1$-Dehn surgery along $a$ and a $0$-Dehn surgery along $b$. The resulting 3-manifold $M^3$ has infinite cyclic fundamental group. By a similar Kirby diagram argument as for the diffeomorphism (\ref{Diffeomorphism 1'}), we see that $M^3$ is diffeomorphic to $S^1\times S^2$ and we conclude that $\widehat{N} = S^1\times (S^1\times S^2) = T^2\times S^2$.  

We now prove the existence of the diffeomorphism (\ref{Diffeomorphism 2}) using the Moishezon trick;
%using a trick due to Moishezon \cite[Lemma 13]{[Moishezon]}. Notice first that the 4-manifold $N^\ast$ contains an embedded copy of $D^2\times S^2$ that is obtained from $\nu(T)$. Moishezon showed that carving this embedded $D^2\times S^2$ out of $N^\ast$ and gluing back in a copy of $S^1\times D^3$ yields the 4-manifold $\widehat{N}$ as explained by Gompf in \cite[Proof of Lemma 3]{[Gompf]} using a Kirby calculus argument depicted in Figure \ref{fig:Argument}.
it implies that the 4-manifold $N^\ast$ is obtained by doing surgery along a nullhomotopic loop in $\widehat{N}$. Therefore, $N^*$ is diffeomorphic to either $\widehat N \# (S^2\times S^2)$ or $\widehat N \# (S^2\tilde\times S^2)$. To check that the diffeomorphism type is the former, it is sufficient to proceed as in Lemma \ref{Lemma Diffeomorphism 3}. Using (\ref{Diffeomorphism 1}), we conclude that $N^\ast \approx (T^2\times S^2)\#(S^2\times S^2)$. Moreover, this diffeomorphism can be chosen to send the framed 2-torus $T$ to the canonical 2-torus with its product framing (cf. proof of Lemma \ref{Lemma Diffeomorphism 3}).
%Furthermore, using a modification of the argument  contained in the proof of Lemma \ref{Lemma Diffeomorphism 3}, one can see that the above diffeomorphism can be chosen to send the framed torus $T$ to the canonical torus framed with its product framing.
\end{proof}

\subsection{The 4-manifolds $N_g$ for $g\in \N$}\label{Section Manifold N_g}We now generalize the construction of the Kodaira-Thurston manifold $N$ in the previous section to produce a 4-manifold $N_g$ that will be the building block $B_{F_g}$ mentioned in step (b) of the strategy in Section \ref{Strategies} for any natural number $g$. The symplectic $4$-manifold $N_g$ is defined as\begin{equation}\label{Definition N_g}
    N_g := S^1 \times Y_g
\end{equation}where $Y_g$ is the mapping torus of\begin{equation*}
    \varphi_g := D_{a_1} \circ ... \circ D_{a_g} : \Sigma_g \rightarrow \Sigma_g 
\end{equation*}and $D_{a_i}$ is a Dehn twist around the loop $a_i \subset \Sigma_g$ defined in Figure \ref{fig:T2xSigmag}.

%We already know this description for the Kodaira-Thurston manifold, i.e. when $g=1$. Assume for the sake of induction that $N_g \cong S^1 \times Y_g$. By the definition of the $3$-manifold $Y_{g+1}$, we have that:
%\begin{align*}
    %S^1 \times Y_{g+1} &= S^1 \times  \frac{(\Sigma_{g+1} \times I)}{q \times \{ 0 \} \sim   \varphi_{g+1}(q) \times \{ 1 \} } \\
 %   &= S^1 \times  \frac{  ((\Sigma_g \setminus D^2) \cup_{\partial} (T^2\setminus D^2)) \times I}{q \times \{ 0 \} \sim   \varphi_{g+1}(q) \times \{ 1 \} } \\
  %  &= S^1 \times ( \frac{  \Sigma_g  \times I \setminus D^2 \times I }{q \times \{ 0 \} \sim   \varphi_g(q) \times \{ 1 \} } \cup  \frac{    T^2  \times I \setminus D^2 \times I  }{q \times \{ 0 \} \sim   \varphi_1(q) \times \{ 1 \} }).
%\end{align*}

%But the monodromy map $\varphi_i$ is the identity away from $a_i$ for $1\leq i \leq g+1$ and hence it is the identity near  $\partial (\Sigma_g \setminus D^2) $ and $\partial (T^2 \setminus D^2) $. Therefore, if $\sigma_g$ and $\sigma$ are zero sections of $Y_g$ and $Y$ respectively, we have that
%\begin{align*}
 %   S^1 \times Y_{g+1} &= S^1 \times (Y_g \setminus \nu (\sigma_g) \cup Y \setminus \nu (\sigma) )\\
%    &=(S^1\times Y_g) \#_T (S^1 \times Y)
%\end{align*}

%where the last fiber sum is done along tori $S^1 \times \sigma_g$ and $S^1 \times \sigma$ i.e. the sections of the torus fibre bundle structure of $Y_g$ and $Y$. The claim follows from \ref{kodaira thurston fibersum}. 
%\end{remark}

\begin{remark}\label{Remark Construction}It is possible to describe (\ref{Definition N_g}) as a symplectic sum\begin{equation}\label{kodaira thurston fibersum}
N_g = N_1\#_{T^2} \cdots \#_{T^2}N_1
\end{equation}of g copies of the Kodaira-Thurston manifold $N_1 = N$. As we already saw in Section \ref{Section Kodaira Thurston}, $N$ is obtained by taking the product of $S^1$ with a mapping torus of a Dehn twist of $T^2$. This is generalized in the construction (\ref{Definition N_g}) of $N_g$ given that a surface of genus $g$ can itself be deconstructed as a connected sum\begin{equation*} \Sigma_g = \underbrace{T^2\#\cdots \#T^2}_{g \text{ times}}\end{equation*} of $g$ copies of the 2-torus.

\end{remark}

The 4-manifold $N_g$ contains a symplectic 2-torus\begin{equation}\label{T}T = x \times y \subset N_1 \setminus \nu (T^2) \subset N_g\end{equation} as well as a Lagrangian 2-torus $x \times b$ for each copy of $N_1$ in (\ref{kodaira thurston fibersum}). These 2-tori can also be seen as $x'\times b_i$ where $x'$ is a parallel copy of $x$ and $b_i$ is the inclusion of $b$ in $N_g$ through the $i^{th}$ copy of $N_1$ in (\ref{kodaira thurston fibersum}). Let $\gamma_i '$ be the framed loops 
\begin{equation}\label{Loops Lemma 3}\gamma_i'=\{p'\}\times b_i\subset x'\times b_i\subset N_g\end{equation}
with framing induced by the Lagrangian framing on the 2-torus $x' \times b$. We gather together the framed loops (\ref{Loops Lemma 3}) into a 1-dimensional submanifold\begin{equation}\label{loops2}
\mathcal{L}_{F_g}=\gamma_1' \sqcup \dots \sqcup \gamma_g'.
\end{equation}

The following homotopical properties of $N_g$ are useful for our purposes.
 
\begin{lemma}\label{Lemma Properties Manifold} Let $g\in\N$.

$\bullet$ The fundamental group of $N_g$ is generated by the inclusion of the fundamental group of the torus $j_*\pi_1(T)$ and the loops $\tilde\gamma_i$ obtained by connecting the loops $\gamma_i'$ to a base point.

$\bullet$ There is an isomorphism 
    \begin{equation}\label{eq: QuotientIsFree}
        \frac{\pi_1(N_g)}{\langle j_*\pi_1(T)\rangle_N }\approx F_g
    \end{equation} where $\langle j_*\pi_1(T)\rangle_N$ is the normal subgroup generated by $j_*\pi_1(T)$ and the loops $\tilde\gamma_i$ correspond to generators.
\end{lemma}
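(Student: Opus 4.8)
The plan is to compute $\pi_1(N_g)$ directly from the fibration structure (\ref{Definition N_g}) and then identify the quotient. First I would use the defining product structure $N_g = S^1\times Y_g$, where $Y_g$ is the mapping torus of $\varphi_g = D_{a_1}\circ\cdots\circ D_{a_g}$, to write $\pi_1(N_g) = \Z\langle x\rangle \times \pi_1(Y_g)$. Since $Y_g$ is a surface bundle over the circle $y$ with fiber $\Sigma_g$, the group $\pi_1(Y_g)$ is the ascending HNN extension (mapping torus group) $\pi_1(\Sigma_g)\rtimes_{(\varphi_g)_*}\Z\langle y\rangle$; concretely it has a presentation with generators $a_1,b_1,\dots,a_g,b_g,y$, the single surface relation $\prod_i[a_i,b_i]=1$, and the monodromy relations $y w y^{-1} = (\varphi_g)_*(w)$ for $w$ ranging over the generators. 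Because each $D_{a_i}$ is a Dehn twist around $a_i$, one has $(\varphi_g)_*(a_j)=a_j$ for all $j$ and $(\varphi_g)_*(b_j) = b_j a_j^{\pm 1}$ (the twist affects only the dual curve), so the monodromy relations become $yb_jy^{-1} = b_j a_j^{\pm 1}$, i.e. $a_j^{\mp 1} = b_j^{-1} y^{-1} b_j y$ expresses each $a_j$ as a commutator-type word in $b_j$ and $y$.

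Next I would read off the torus $T = x\times y$ and the loops $\tilde\gamma_i$. The subgroup $j_*\pi_1(T)$ is $\langle x, y\rangle$ (here $x$ is central since it is the $S^1$-factor). Killing the normal closure $\langle j_*\pi_1(T)\rangle_N$ therefore sets $x=1$ and $y=1$. Imposing $y=1$ in the presentation of $\pi_1(Y_g)$ collapses every monodromy relation $yb_jy^{-1}=b_ja_j^{\pm1}$ to $a_j^{\pm 1}=1$, hence $a_j=1$ for all $j$; the surface relation $\prod[a_i,b_i]=1$ then becomes trivial, and what survives is the free group on $b_1,\dots,b_g$. This gives the isomorphism (\ref{eq: QuotientIsFree}) with $F_g$. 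For the first bullet, I would observe that $\pi_1(N_g)$ is generated by $x,y,a_1,b_1,\dots,a_g,b_g$, that $x,y\in j_*\pi_1(T)$, that the $a_j$ are redundant by the monodromy relations $a_j^{\mp1}=b_j^{-1}y^{-1}b_jy \in \langle j_*\pi_1(T), b_j\rangle$, and that $\tilde\gamma_i$ is precisely the loop represented by $b_i$ (up to the based-loop connecting path and the central $x$), since $\gamma_i' = \{p'\}\times b_i$. To make this identification clean I would invoke Remark \ref{Remark Construction}: $N_g = N_1\#_{T^2}\cdots\#_{T^2}N_1$ along $T^2=a\times b$, so by van Kampen $\pi_1(N_g)$ is the amalgamated product of the $\pi_1(N_1)$'s over $\pi_1(T^2)$, and the Kodaira–Thurston computation $\pi_1(N_1)=\langle x,y,a,b \mid [x,y], [x,a], [x,b], [y,b], yby^{-1}=ba, \dots\rangle$ (or rather its known presentation) gives the $g=1$ case from which the general statement follows by amalgamation.

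The main obstacle is bookkeeping rather than conceptual: getting the monodromy action $(\varphi_g)_*$ on $\pi_1(\Sigma_g)$ exactly right — in particular confirming that the composite of Dehn twists along the disjoint curves $a_1,\dots,a_g$ fixes every $a_j$ and sends $b_j\mapsto b_j a_j^{\varepsilon_j}$ with $b_k$ ($k\ne j$) essentially untouched (the picard–Lefschetz formula for a curve $a_j$ disjoint from the others), so that after killing $y$ the only surviving generators are the $b_j$ with no relations among them. A secondary point to check carefully is that the normal closure $\langle j_*\pi_1(T)\rangle_N$ is generated as a \emph{normal} subgroup by just $x$ and $y$ and that conjugating these by the $b_j$'s does not secretly kill any $b_j$; this follows because $x$ is central and the relations involving $y$ only let $y$ absorb powers of $a_j$, never of $b_j$. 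Once the monodromy is pinned down, both bullets are immediate from the resulting presentation.
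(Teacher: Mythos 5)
Your proof is correct and follows essentially the same route as the paper: both write $\pi_1(N_g)=\Z\langle x\rangle\times\pi_1(Y_g)$, present $\pi_1(Y_g)$ as the HNN extension with relations $[y,a_i]=1$ and $yb_iy^{-1}=b_ia_i^{\pm1}$ coming from the Dehn twists, and observe that killing $x$ and $y$ forces each $a_i=1$, leaving the free group on $b_1,\dots,b_g$. The monodromy bookkeeping you flag as the main obstacle works out exactly as you predict, since the twisting curves $a_1,\dots,a_g$ are disjoint and each meets only its dual $b_i$.
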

\begin{proof}
%The asphericity of $N_g$ follows from the fact that it is a $\Sigma_g$-bundle over $T^2$. 

We know that $\pi_1(Y_g)$ is given by an HNN extension, which yields 
\begin{equation*}
    \pi_1 (N_g) \cong \langle x \rangle  \oplus \langle y, a_1 , b_1 ,...,a_g,b_g \mid  [y,a_i]= [y,b_i]a^{-1}= \prod_{j=1}^g [a_j,b_j] =1 \text{ for } i=1, \cdots, g\rangle
\end{equation*}
with $j_* (\pi_1(T)) = \langle x, y \mid [x,y] \rangle $. We conclude that
\begin{equation*}
    \frac{\pi_1(N_g)}{\langle j_*(\pi_1(T))\rangle _N} \cong \langle b_1 , b_2 ,...,b_g \rangle \cong F_g,
\end{equation*}where $x\in \pi_1(T^2\times \Sigma_g)$ is the homotopy class of the loop $x\times\{p\}\subset T^2\times \Sigma_g$ connected to a base point, and similarly for $y,a_i$ and $b_i$ for $i=1,\dots,g$.

\end{proof}

Perform $g$ surgeries to $N_g$ along the framed loops (\ref{Loops Lemma 3}) to produce a closed 4-manifold\begin{equation}\label{Definition N_g*}
    N_g^\ast = N_g\setminus (\bigsqcup_{i=1}^g \nu(\gamma_i')\cup \bigsqcup_{i=1}^{g} (D^2\times S^2))\end{equation}with fundamental group $\pi_1(N^*_g)=\Z\oplus \Z$. The  2-torus $T$ is disjoint from the seam of the surgeries in (\ref{Definition N_g*}) and it defines an embedded framed 2-torus $T$ in $N_g^*$.

\vspace{1mm}
We now identify the diffeomorphism type of the 4-manifold $N_g^*$.

\begin{lemma}\label{Lemma Diffeomorphism 2} Let $g\in\N$ and let $N_g^*$ be the manifold defined in (\ref{Definition N_g*}). There is a diffeomorphism\begin{equation}\label{Diffeomorphism Ng*}N_g^*\approx (T^2\times S^2)\# g (S^2\times S^2).\end{equation}
Moreover, through this diffeomorphism the framed 2-torus $T\subset N_g^*$ is mapped to the canonical 2-torus $T^2\times\{p\}\subset (T^2\times S^2)\# g(S^2\times S^2)$ equipped with its product framing.
\end{lemma}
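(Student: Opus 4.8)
\textbf{Proof plan for Lemma \ref{Lemma Diffeomorphism 2}.}

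The plan is to reduce the lemma to Lemma \ref{Lemma Diffeomorphism 1} by an inductive argument on $g$, using the symplectic-sum decomposition $N_g = N_1 \#_{T^2} \cdots \#_{T^2} N_1$ from Remark \ref{Remark Construction} together with the Moishezon trick employed in the proofs of Lemma \ref{Lemma Diffeomorphism 3} and Lemma \ref{Lemma Diffeomorphism 1}. First I would observe that the $g$ framed loops $\gamma_i'$ of (\ref{Loops Lemma 3}) are pairwise disjoint, each contained in its own copy of $N_1$ in the symplectic sum, and each disjoint from the 2-torus $T = x\times y$ (which lives in $N_1\setminus\nu(T^2)$). Hence surgery along $\mathcal{L}_{F_g}$ can be performed one loop at a time, and it suffices to understand the effect of a single surgery localized in one $N_1$-summand.

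The key step is to run the Moishezon trick in each $N_1$ summand exactly as in the proof of Lemma \ref{Lemma Diffeomorphism 1}: replace the surgery along $\gamma_i'$ by a multiplicity zero log transform along the Lagrangian 2-torus $x'\times b_i$ followed by a loop surgery along a framed loop $c_i$ that is nullhomotopic (nullhomotopy because, by Lemma \ref{Lemma Properties Manifold}, performing the log transforms does not change the fundamental group, while the eventual loop surgeries kill exactly the $b_i$). Performing all $g$ log transforms turns $N_g$ into an auxiliary manifold $\widehat{N}_g$ which, arguing locally in each summand as in Lemma \ref{Lemma Diffeomorphism 1} (where a single log transform converts $N=N_1$ into $T^2\times S^2$), is diffeomorphic to $T^2\times S^2$; through this diffeomorphism the 2-torus $T$ goes to the canonical $T^2\times\{p\}$ with its product framing, and the $c_i$ go to nullhomotopic framed loops $\bar c_i$ disjoint from $T^2\times\{p\}$. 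Then $N_g^*$ is obtained from $T^2\times S^2$ by $g$ surgeries along the nullhomotopic $\bar c_i$, so it is diffeomorphic to $(T^2\times S^2)\# g(S^2\times S^2)$ or to a manifold involving some twisted summands $S^2\tilde\times S^2$; to rule out the twisted case one checks, exactly as in Lemma \ref{Lemma Diffeomorphism 3}, that a basis of $H_2(N_g;\Z)$ can be represented by surfaces disjoint from the loops (\ref{loops2}), so the intersection form of $N_g^*$ equals that of $N_g$, which is even, forcing the untwisted connected sum. Finally, to track the framed 2-torus I would repeat verbatim the neighborhood-fixing isotopy argument at the end of the proof of Lemma \ref{Lemma Diffeomorphism 3}: write $T^2\times S^2 = (T^2\times S^2)\# S^4$, push all the $\bar c_i$ into the $S^4$ factor keeping a neighborhood of $T^2\times\{p\}$ fixed, obtaining a diffeomorphism of pairs $(N_g^*, T)\approx ((T^2\times S^2)\#(S^4)^*, T^2\times\{p\})$ with $(S^4)^* \approx g(S^2\times S^2)$, and compose with a final diffeomorphism fixing a neighborhood of the canonical torus.

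Alternatively — and this may be cleaner to write — one can induct directly using (\ref{kodaira thurston fibersum}): assume the pair statement for $N_{g-1}$, note $N_g = N_{g-1}\#_{T^2}N_1$ glued along a neighborhood of (parallel copies of) $T$, perform the first $g-1$ surgeries in the $N_{g-1}$ part to get $(N_{g-1}^*, T)\approx((T^2\times S^2)\# (g-1)(S^2\times S^2), T^2\times\{p\})$ by the inductive hypothesis, and the last surgery in the $N_1$ part, which by Lemma \ref{Lemma Diffeomorphism 1} contributes one further $S^2\times S^2$ summand while again fixing a neighborhood of the canonical torus; the symplectic sum along $T^2$ then matches up the product framings. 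The main obstacle I expect is bookkeeping rather than conceptual: making sure the log transform/loop-surgery decomposition in each summand is compatible with the gluing of the symplectic sum, that the loops $\bar c_i$ really are simultaneously nullhomotopic in the complement of the canonical torus (this is where Lemma \ref{Lemma Properties Manifold} and the fact that $\pi_1$ of the complement of $T^2\times\{p\}$ surjects onto $\pi_1$ of the whole manifold are used), and that all the intermediate diffeomorphisms can be taken to be the identity near $T^2\times\{p\}$ so that the framings propagate. Since none of these steps introduces a genuinely new difficulty beyond what is already handled in Lemma \ref{Lemma Diffeomorphism 3} and Lemma \ref{Lemma Diffeomorphism 1}, the proof will largely be a matter of assembling those two arguments $g$ times.
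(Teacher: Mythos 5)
Your proposal is correct and follows essentially the same route as the paper: the authors prove this lemma by induction on $g$ using Lemma \ref{Lemma Diffeomorphism 1} and the fiber-sum decomposition of Remark \ref{Remark Construction}, explicitly leaving the details to the reader. Your write-up in fact supplies more of those details (the Moishezon trick in each summand, the evenness check ruling out twisted $S^2$-bundle summands, and the framing bookkeeping near $T$) than the paper does.
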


\begin{proof}A proof of Lemma \ref{Lemma Diffeomorphism 2} by induction on $g$ is obtained using Lemma \ref{Lemma Diffeomorphism 1} and the construction of $N_g$ in terms of a generalized fiber sum of g copies of the Kodaira-Thurston manifold $N_1$ as it is described in Remark \ref{Remark Construction}. The details are left to the reader.
\end{proof}

\subsection{Infinitely many inequivalent smooth structures}\label{Section Smooth Structures} %Recall that a smooth 4-manifold $X$ is irreducible if for any smooth connected sum decomposition $X = X_1\# X_2$, then either $X_1$ or $X_2$ is homeomorphic to the 4-sphere \cite[Definition 10.1.17]{[GompfStipsicz]}. 

The smooth 4-manifolds of the fourth clause of Theorem \ref{Theorem A} are introduced in the following proposition.

\begin{proposition}\label{Proposition Smooth Structures 1}Let $M$ be an admissible 4-manifold in the sense of Definition \ref{Definition Admissible Manifold}. 
Let $B_G$ be the building block in step (b) of Section \ref{Strategies} that was defined in Section \ref{Section manifold W_g} and Section \ref{Section Manifold N_g} for the groups $G = \pi_1(\Sigma_g)$ and $G=F_g$, respectively.

The generalized fiber sum
\begin{equation}\label{example1} M\#_{T^2} B_G = (M\setminus \nu(T_2))\cup (B_G \setminus \nu(T))\end{equation} 
admits infinitely many pairwise inequivalent smooth structures $\{Z_K \mid K \in \mathcal{K} \}$.

Moreover, the fundamental group of (\ref{example1}) is isomorphic to $G$ and it is generated by the homotopy classes of the loops obtained by connecting every component of $\mathcal{L}_G$ (\ref{Set of loops gamma}) (\ref{loops2}) to a common base point.
%\begin{enumerate}\item infinitely many inequivalent irreducible smooth structures that support a symplectic structure, and \item infinitely many inequivalent irreducible smooth structures that do not support a symplectic structure.\end{enumerate}
\end{proposition}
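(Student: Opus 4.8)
The plan is to establish the three assertions of Proposition~\ref{Proposition Smooth Structures 1} separately: the existence of infinitely many smooth structures $\{Z_K\}$, the identification of $\pi_1$ with $G$, and the description of a generating set. For the fundamental group, I would apply the Seifert--van Kampen theorem to the decomposition \eqref{example1}, using the admissibility hypothesis (Definition~\ref{Definition Admissible Manifold}) that $\pi_1(M\setminus\nu(T_2))=1$. Since $M\setminus\nu(T_2)$ is simply connected, the only contribution comes from $B_G\setminus\nu(T)$, with the meridian and two longitudes of $T$ being glued to their counterparts coming from the trivial side; the meridian $\mu_T$ dies because it bounds a disk in $M\setminus\nu(T_2)$ (a geometrically dual surface to $T_2$ provides it), and the two longitudes of $T$ die because they bound in $M\setminus\nu(T_2)$ as well, since they are homotopic into $M\setminus\nu(T_2)$ where $\pi_1=1$. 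Hence $\pi_1(M\#_{T^2}B_G)\cong \pi_1(B_G\setminus\nu(T))/\langle\!\langle j_*\pi_1(T)\rangle\!\rangle$. For $G=F_g$ this quotient is exactly the group computed in Lemma~\ref{Lemma Properties Manifold}, namely $F_g$, with the $\tilde\gamma_i$ as generators; for $G=\pi_1(\Sigma_g)$ the analogous computation with $B_g=T^2\times\Sigma_g$ gives $\pi_1(T^2\times\Sigma_g)/\langle\!\langle \pi_1(T^2\times\{p\})\rangle\!\rangle\cong\pi_1(\Sigma_g)$, with the surgery loops $\mathcal{L}_{\pi_1(\Sigma_g)}$ in \eqref{Set of loops gamma} mapping to the standard generators. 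This simultaneously gives the ``moreover'' clause about the generating set.

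For the infinitely many inequivalent smooth structures, I would proceed via Seiberg--Witten theory in the now-standard Fintushel--Stern style. Recall from step (a) of Section~\ref{Section Strategy} that $M_K$ is obtained from $M$ by knot surgery along $T_1$; by Fintushel--Stern's theorem the small-perturbation SW invariant of $M_K$ (when $b_2^+>1$, which holds by Lemma~\ref{lemma inequality Betty number M} and the basic-class hypothesis) is multiplied by the Alexander polynomial $\Delta_K(t)$ under the identification $t\leftrightarrow\exp(2[T_1])$, while $T_2$ and a neighborhood of it are untouched, so $T_2\subset M_K$ still has the simply connected complement and trivial normal bundle needed to form the generalized fiber sum. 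Then $Z_K=M_K\#_{T^2}B_G$; since $B_G$ is symplectic and contains the symplectic (or, after the log-transform reinterpretation, suitably framed) torus $T$ of square zero, and $M_K$ is (after the knot surgery) a manifold with nontrivial SW invariant containing the square-zero torus $T_2$, the gluing formula for Seiberg--Witten invariants under fiber sum along tori (Morgan--Mrowka--Szab\'o, or Park's formulation) shows that $SW_{Z_K}$ is, up to a common factor independent of $K$, divisible by $\Delta_K(t)$ evaluated on the image of $[T_2]$. Because the knots in $\mathcal{K}$ have pairwise distinct Alexander polynomials, the set of SW basic classes — or the SW function itself — distinguishes the $Z_K$ up to finitely many coincidences; discarding finitely many knots leaves an infinite family with pairwise distinct SW invariants, hence pairwise non-diffeomorphic. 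That they all lie in one homeomorphism type follows from Freedman's theorem once one checks $Z_K$ has the same intersection form, Betti numbers, and fundamental group $G$ as $Z_U$ — the fundamental-group computation above handles the last point, and the first two are unchanged by knot surgery and by the fiber sum, which are homeomorphism-preserving operations.

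The main obstacle I anticipate is making the Seiberg--Witten computation fully rigorous in the presence of the nontrivial fundamental group $G$: the fiber-sum gluing formula for SW invariants is cleanest for simply connected pieces or when $b_1$ is controlled, and here $\pi_1(Z_K)=G$ is infinite and nonabelian for $G=\pi_1(\Sigma_g)$ or $F_{g\ge2}$. One has to be careful whether to invoke the invariant in the form of a function on $\mathrm{Spin}^c$ structures, to pass to an appropriate cover or twisted version, or — as is common in this literature — to sidestep the issue by distinguishing the \emph{simply connected} manifolds $Z_K^\ast$ obtained after the surgeries of step (c) instead, since the diffeomorphism type of $Z_K$ is detected through that of $Z_K^\ast$ together with the surgery data. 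I would structure the proof so that the genuinely gauge-theoretic input is quoted as a black box (the knot-surgery formula plus the torus-sum formula, exactly as in \cite{[Fintushel], [FintushelStern3], [Gompf2]}), and the novel content is the bookkeeping: verifying that $T_2$ survives knot surgery with the right framing and complement, that the fiber sum with $B_G$ is legitimate, and that the resulting SW data still sees $\Delta_K$. A secondary, more routine obstacle is confirming that $B_G$ really is symplectic with $T$ symplectic of square zero (for $G=F_g$ this is Remark~\ref{Remark Construction} and the symplectic-sum description; for $G=\pi_1(\Sigma_g)$, $B_g=T^2\times\Sigma_g$ is obviously symplectic with $T^2\times\{p\}$ symplectic), so that Gompf's symplectic fiber sum applies and the machinery of symplectic SW nonvanishing (Taubes) can be brought to bear if needed.
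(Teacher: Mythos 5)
Your Seifert--van Kampen computation of $\pi_1$ and your strategy for telling the smooth structures apart (knot surgery multiplies $\mathcal{SW}$ by $\Delta_K$, then a torus-sum gluing formula carries this into $Z_K=M_K\#_{T^2}B_G$) are essentially what the paper does; it uses Taubes' gluing theorem for manifolds with essential tori, treats $\mathcal{SW}$ as an element of $\Z[H_2(Z_K;\Z)]$, and checks via Mayer--Vietoris that the relevant classes are not killed by the inclusion-induced maps, arriving at $\mathcal{SW}_{Z_K}=\Delta_K(e^{2c})\cdot\mathcal{SW}_Z$. Your worry about nonabelian $\pi_1$ is legitimate but is absorbed by this formulation, and no knots need to be discarded since the Alexander polynomials are assumed pairwise distinct.

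The genuine gap is in your argument that the $Z_K$ all lie in one homeomorphism class, i.e.\ that they really are smooth structures on the single topological manifold $M\#_{T^2}B_G$. You propose to match intersection form, Betti numbers and fundamental group and then invoke Freedman's theorem, but Freedman's classification applies to \emph{simply connected} closed $4$-manifolds; for closed $4$-manifolds with $\pi_1\cong F_g$ or $\pi_1(\Sigma_g)$ no such classification by invariants is available (these groups are not known to be good, so topological surgery and the s-cobordism theorem cannot be invoked). The paper circumvents this entirely by constructing the homeomorphism by hand before the fiber sum is performed: Freedman \cite{[Freedman]} gives a homeomorphism $f:M\to M_K$ of the simply connected closed manifolds with $f_*[T_2]=[T_2]$; Sunukjian's result \cite[Theorem 7.1]{[Sunukjian2]} on topological equivalence of embedded surfaces with simply connected complements upgrades this to a homeomorphism of pairs $(M,T_2)\to(M_K,T_2)$; and, after choosing the framing $\tau_K$ on $T_2\subset M_K$ to be the pullback of the chosen framing under this map, the homeomorphism extends by the identity over $B_G\setminus\nu(T)$ to a homeomorphism $Z\to Z_K$. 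Some such relative construction is indispensable here (and is reused later to get the homeomorphisms of pairs needed for topological isotopy of the $2$-links), so your proof as written does not establish the first clause of the proposition.
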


%As it was mentioned in the introduction, the inequivalent smooth structures on (\ref{example1}) become diffeomorphic after adding enough copies of $S^2\times S^2$ by a result of Gompf \cite[Theorem.]{[GompfS]}. 

\begin{proof} For a knot $K\in \mathcal{K}$, perform Fintushel-Stern knot surgery to M along $T_1$ to obtain the $4$-manifold $M_K$. Notice that we still have an inclusion\begin{equation*}T_2\subset (M \setminus \nu(T_1))\xhookrightarrow{i} M_K,\end{equation*} since this cut and paste operation can be performed away from the $2$-torus $T_2$. We abuse notation and denote $i(T_2)$ by $T_2$, and we make sure that the context resolves any confusion. The intersection forms of $M$ and $M_K$ are isomorphic as computed in \cite[V.4.1]{[Hamilton]}\cite{[FintushelStern3]}. Moreover, there is an isomorphism of the second homology groups sending the homology class $[T_2]\in H_2(M; \mathbb{Z})$ onto $[T_2] \in H_2(M_K; \mathbb{Z})$ \cite[Chapter V]{[Hamilton]}. By Freedman's theorem \cite{[Freedman]} this isomorphism can be realised by a homeomorphism $f:M\rightarrow M_K$ such that the induced map in homology $f_*$ sends the homology class $[T_2]$ onto itself
%    $$f_*([T_2] \in H_2(M))=[T_2] \in H_2(M_K).$$
\[\begin{tikzcd}  [row sep=small]
 H_2(M;\mathbb{Z})\arrow[r,"f_*"] &  H_2(M_K; \mathbb{Z})\\
{[T_2]}\arrow[r]\arrow[u,phantom, sloped, "\in"] & {[T_2]}\arrow[u, phantom, sloped, "\in"].
\end{tikzcd}\]
    By our hypothesis, the $2$-tori $f(T_2), T_2 \subset M_K$ have simply connected complement. A result of Sunukjian \cite[Theorem 7.1]{[Sunukjian2]} implies the existence of a homeomorphism of pairs\begin{equation}g:(M_K,f(T_2))\rightarrow (M_K, T_2)\end{equation}and we obtain the desired homeomorphism of pairs \begin{equation}\label{Needed}g\circ f: (M,T_2) \rightarrow (M_K,T_2).\end{equation}
    We proceed to fiber sum both $M$ and $M_K$ with $B_G$. In particular, we perform the generalized fiber sum by identifying the $2$-tori $T_2$ in $M$ and $M_K$ with the framed 2-torus $T \subset B_G$. In order to do so, the 2-torus $T_2\subset M_K$ is endowed with a framing $\tau_K$ that is the restriction of the map  $\tau\circ(g\circ f)^{-1}$ to $\overline{\nu(T_2)}$. We fix any framing for the 2-torus $T_2$ in $M$. Notice that $\tau_K$ is indeed a smooth framing, since we can assume that the map $g\circ f$ is a smooth diffeomorphism around $T_2$. We define the 4-manifolds\begin{equation}\label{Manifold ZZk}
    Z:=M\#_{T^2}B_G\qquad Z_K:=M_K\#_{T^2}B_G.\
    \end{equation}Given our chosen framings, the homeomorphism (\ref{Needed}) extends via the identity map to a homeomorphism $F:Z\to Z_K$. The claim regarding the fundamental group $\pi_1(M \#_{T_2} B_G) = G$ follows from Lemma \ref{Lemma Properties Manifold} and the Seifert-Van Kampen theorem.

    %We do not specify now the gluing maps involved in such construction, since they will be clarified in a few lines. We shall call the resulting manifolds $Z$ and $Z_K$ respectively.    Our aim now is doing the fiber sum in such a way that the restricted map $(g\circ f)_{\vert M\setminus \nu(T_2) }$ extends to a homeomorphism from $Z$ to $Z_K$, in order to get an infinite family of $4$-manifolds with the same homeomorphism type of $Z$.    Let $\tau : T^2 \times \partial D^2 \rightarrow \partial  (M \setminus \nu(T_2))$ be the gluing map identifying the parametrised boundary of the framed $\nu(T) \subset N_g$ with $\partial  M \setminus \nu(T_2)$ and let $\tau_K$ be the analogous map for the $M_K$ case.  
    %To be able to obtain a homeomorphism $Z \rightarrow Z_K$ that restricts to the identity on $N_g\setminus \nu(T)$ we need $\Psi := \tau_K^{-1} \circ (g\circ f)\circ \tau : T^2 \times \partial D^2   \rightarrow T^2 \times \partial D^2   $  to be the identity map.   
    %By the tubular neighborhood theorem, we can isotope $g\circ f$ to make it send fibers of $\nu(T_2) \subset M$ to fibers of $\nu(T_2) \subset M_K$. In particular, we can suppose without loss of generality that $\Psi$ is a bundle isomorphism of the trivial bundle $T^2 \times S^1$ (the projection of $\Psi$ is not necessarily the identity).We can therefore choose $\tau_K$ (with $\tau_K$ sending fibers of $\partial M_K \setminus \nu (T_2)$ to fibers of $T^2 \times S^1$) such that $\Psi = id$ and hence $Z$ and $Z_K$ are homeomorphic by construction.   

To show that the set $\{Z_K \mid K \subset \mathcal{K} \}$ consists of pairwise non-diffeomorphic 4-manifolds, we look at the Seiberg-Witten invariant of each of its elements as a Laurent polynomial in the group ring $\Z[H_2(Z_K; \Z)]$ for every $K\in \mathcal{K}$ \cite[Lecture 2]{[FintushelStern2]}. Recall that the Seiberg-Witten invariants $\{SW_X(k)\mid k\in H_2(X)\}$ of a 4-manifold $X$ can be combined into an element of $\Z[H_2(X; \Z)]$ by associating a formal variable $t_{\alpha}$ to each homology class $\alpha \in H_2(X; \Z)$ and by setting \begin{equation}\label{SW Invariant}\mathcal{SW}_X = \sum SW_X(k)\cdot t_k\end{equation}where the sum is taken over all the basic classes $k\in H_2(X;\Z)$; see  \cite[p. 200]{[Fintushel]}, \cite{[Sunukjian1]} for further details. An admissible 4-manifold contains at least one basic class and hence the invariant (\ref{SW Invariant}) is nonzero, and the same holds for the symplectic 4-manifold $B_G$ by a result of Taubes \cite{[Taubes1]}.

 %Taubes' gluing formula \cite[Theorem 1.1]{[Taubes2]} implies that $SW_{M \setminus \nu (T_2)}$ and $SW_{(T^2 \times \Sigma_g)\setminus \nu (T)}$ are also non-trivial. 

We proceed to set up the use of a gluing formula due to Taubes \cite{[Taubes2]} to compute (\ref{SW Invariant}) for $Z_K$; see \cite[Section 13.9]{[Akbulut2]}. The boundary of $B_G \setminus \nu(T)$ is diffeomorphic to $S^1\times S^1\times \partial D^2$ by the diffeomorphism coming from the framing of $T\subset B_G$ and the rim 2-tori\begin{center}$S^1\times \{p\} \times \partial D^2$ and $\{p\} \times S^1 \times \partial D^2$\end{center} bound 3-manifolds in $B_G\setminus \nu(T)$. This fact combined with the Mayer-Vietoris sequence for $Z_K = (M_K \setminus \nu(T_2) )\cup(B_G \setminus \nu(T)) $ implies that the inclusion induced homomorphism\begin{equation*}j_*:H_2(B_G \setminus \nu (T); \mathbb{Z}) \rightarrow H_2(Z_K; \mathbb{Z})\end{equation*} is injective. The Mayer-Vietoris sequence for $M_K = (M_K\setminus \nu(T_2)) \cup \nu(T_2)$ yields that the inclusion induced homomorphisms\begin{equation*}i_* :H_2(M_K \setminus \nu (T_2); \mathbb{Z}) \rightarrow H_2(Z_K; \mathbb{Z})\end{equation*}and\begin{equation*}i_*' :H_2(M_K \setminus \nu (T_2); \mathbb{Z}) \rightarrow H_2(M_K; \mathbb{Z})\end{equation*} have the same kernel\begin{equation*}
    ker (i_*')= ker (i_*) =\langle [\tau_K^{-1}(\{p\}\times S^1 \times \partial D^2)] ,[\tau_K^{-1}(S^1 \times \{p\} \times \partial D^2)] \rangle.\end{equation*}

On the other hand, $\mathcal{SW}_{M_K} = \mathcal{SW}_M \cdot \Delta_K(e^{2[T_1]}) \neq 0$ by \cite{[FintushelStern3]} (see \cite[p. 201]{[Fintushel]}, \cite[Proposition 13.21]{[Akbulut2]}). Taubes' gluing formula \cite[Theorem 1.1]{[Taubes2]} implies that 
\begin{equation*}
    \mathcal{SW}_{M_K}=i_*'(\mathcal{SW}_{M_K \setminus \nu (T_2)} ) \cdot i_*''(\mathcal{SW}_{\nu(T_2)}).
\end{equation*}
It follows that $\mathcal{SW}_{M_K \setminus \nu (T^2)} \not \in ker(i_*')=ker(i_*)$.

%The inclusion induced maps $i_* :H_2(M_K \setminus \nu (T_2); \mathbb{Z}) \rightarrow H_2(Z_K; \mathbb{Z})$ and $i_*':H_2((T^2 \times \Sigma_g) \setminus \nu (T); \mathbb{Z}) \rightarrow H_2(Z_K; \mathbb{Z})$ are injective and hence $i_*(\mathcal{SW}_{M_K \setminus \nu (T_2)}), i'_*(\mathcal{SW}_{(T^2 \times \Sigma_g)\setminus \nu (T)}) \neq 0$. 

Moreover, $c:=i_* ([T_1']) \neq 0$ where $T_1 '$ is a pushoff of $T_1$ inside $M_K \setminus \nu (T_2)$. We apply again Taubes' gluing formula \cite[Theorem 1.1]{[Taubes2]} and get
\begin{align*}
    \mathcal{SW}_{Z_K} &= i_* (\mathcal{SW}_{M_K\setminus \nu (T_2)} )\cdot j_* (\mathcal{SW}_{B_G\setminus \nu (T)}) \\
    &= i_* ( \Delta_K (e^{2[T_1']}) ) \cdot i_*(\mathcal{SW}_{M \setminus \nu (T_2)})  \cdot j_* (\mathcal{SW}_{(B_G\setminus \nu (T)})  \\ 
    %&=   \Delta_K (e^{2c})  \cdot i_*(\mathcal{SW}_{M \setminus \nu (T_2)})  \cdot j_* (\mathcal{SW}_{B_G\setminus \nu (T)})  \\
    &=   \Delta_K (e^{2c})  \cdot \mathcal{SW}_Z \\
    &\ncong \mathcal{SW}_Z
\end{align*}
for a knot $K$ with non-trivial Alexander polynomial. The $\mathcal{SW}$-invariants, thus, are different\cite{[Sunukjian1]}, and we conclude that $Z$ and $Z_K$ are non-diffeomorphic. Moreover, for two knots $K_1$ and $K_2$ with Alexander polynomials $\Delta_{K_1}\neq \Delta_{K_2}$, we have that the 4-manifolds $Z_{K_1}$ and $Z_{K_2}$ have different $\mathcal{SW}$-invariants and, therefore, they are non-diffeomorphic. This concludes the proof of the proposition.

\end{proof}

It is possible to give a more explicit computation of (\ref{SW Invariant}) of $Z_K$ in the proof of Proposition \ref{Proposition Smooth Structures 1} by using other gluing formulas \cite[Section 1]{[Fintushel]}. B. D. Park computed \begin{center}$\mathcal{SW}_{T^2\times \Sigma_g} = (t^{-1} - t)^{2g - 2}$ and $\mathcal{SW}_{T^2\times {\Sigma_g^0}} = (t^{-1} - t)^{2g - 1}$\end{center} for $t = [T^2\times \{p\}]$ and $\Sigma_g^0 = \Sigma_g\setminus D^2$ in \cite[Corollary 19]{[BDPark]}. Since the symplectic 4-manifold $N_g$ is obtained by applying Luttinger surgeries to $T^2\times \Sigma_g$, the invariants $\mathcal{SW}_{N_g}$ and $\mathcal{SW}_{N_g\setminus \nu(T)}$ can be computed using B. D. Park's work.

\begin{remark}\label{irrelevant framing}
Notice that the diffeomorphism type of the generalized fiber sum $M\#_{T_2} (T^2 \times S^2)$ is the same regardless the choice of framing $\tau$ or $\tau_K$ for $T_2$ and the product framing for $T^2 \times \{p\}\subset T^2\times \Sigma_g$. 
\end{remark}

\subsection{The ambient 4-manifolds of Theorem \ref{Theorem A}}\label{Section Diffeomorphism Type}We now identify the diffeomorphism types of the ambient 4-manifolds of Theorem \ref{Theorem A}. Since the codimension three submanifold $\mathcal{L}_G\subset B_G$ is disjoint from $T$, it is embedded in the 4-manifold $Z_K$ constructed in (\ref{Manifold ZZk}) and each of its components is framed. We define $Z_K^*$ to be the 4-manifold obtained by doing loop surgery along each component of $\mathcal{L}_G\subset Z_K$. Moreover, we can define $\Gamma$ to be the 2-link in $Z_K^*$ given by the belt 2-spheres of the surgeries.

\begin{proposition}\label{Proposition Diffeo Ambient}For every knot $K\in \mathcal{K}$, there is a diffeomorphism\begin{equation}\label{Ambient Diffeomorphism 1}Z^\ast_{K} \approx M\#n(S^2\times S^2).\end{equation}

\end{proposition}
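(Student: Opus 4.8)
The plan is to reduce the claim to the diffeomorphism statements already established for the building blocks, namely Lemma \ref{Lemma Diffeomorphism 3} (for $G = \pi_1(\Sigma_g)$) and Lemma \ref{Lemma Diffeomorphism 2} (for $G = F_g$). The key observation is that $Z_K^\ast$ is obtained from $Z_K = M_K \#_{T^2} B_G$ by surgering along the loops $\mathcal{L}_G \subset B_G \setminus \nu(T) \subset Z_K$, and since these loops and their surgeries take place entirely inside the copy of $B_G \setminus \nu(T)$ (they are disjoint from $T$, hence from the gluing region), the whole operation commutes with forming the generalized fiber sum. Concretely, I would write
\begin{equation*}
Z_K^\ast = (M_K \setminus \nu(T_2)) \cup_{\partial} \big( B_G^\ast \setminus \nu(T) \big),
\end{equation*}
where $B_G^\ast$ denotes $B_g^\ast$ when $G = \pi_1(\Sigma_g)$ and $N_g^\ast$ when $G = F_g$; here I am using that surgery on $\mathcal{L}_G$ inside $B_G$ produces exactly $B_G^\ast$ (compare (\ref{Auxiliary Manifold 2}) and (\ref{Definition N_g*})), and that this surgery does not touch a neighborhood of $T$.

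Next I would invoke the refined conclusions of Lemma \ref{Lemma Diffeomorphism 3} and Lemma \ref{Lemma Diffeomorphism 2}: not only is $B_G^\ast$ diffeomorphic to $(T^2 \times S^2) \# n(S^2 \times S^2)$ (with $n = 2g$ in the surface-group case and $n = g$ in the free-group case), but the framed 2-torus $T \subset B_G^\ast$ is carried by this diffeomorphism to the canonical framed 2-torus $T^2 \times \{p\}$ in $(T^2 \times S^2)\# n(S^2 \times S^2)$. This is precisely what is needed to propagate the diffeomorphism across the fiber sum: since a diffeomorphism of pairs $(B_G^\ast, T) \approx ((T^2\times S^2)\# n(S^2\times S^2), T^2\times\{p\})$ restricts to a diffeomorphism of the complements of tubular neighborhoods compatible with the product framings, it glues with $\id_{M_K \setminus \nu(T_2)}$ to give
\begin{equation*}
Z_K^\ast \approx M_K \#_{T^2} \big( (T^2\times S^2) \# n(S^2\times S^2) \big) \approx \big( M_K \#_{T^2} (T^2\times S^2) \big) \# n(S^2\times S^2),
\end{equation*}
the last step because the $n$ copies of $S^2 \times S^2$ are summed at a point away from $T^2 \times \{p\}$, hence away from the gluing locus. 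Finally I would identify $M_K \#_{T^2}(T^2\times S^2)$ with $M$: fiber-summing with $T^2 \times S^2$ along a 2-torus with simply connected complement and self-intersection zero undoes the effect of the torus on homology and, more to the point, $M_K \#_{T^2}(T^2\times S^2) \approx M_K$ (the $T^2 \times S^2$ summand along a section is a "trivial" fiber sum — this is a standard fact, cf. Remark \ref{irrelevant framing} for the framing independence and the references in step (b) of Section \ref{Section Strategy}), and $M_K$ is diffeomorphic to $M\#n(S^2\times S^2)$ after the surgeries; more cleanly, I would run the argument so that the knot surgery along $T_1$ inside $M_K$ is also absorbed, since for the diffeomorphism type of $Z_K^\ast$ the knot $K$ is irrelevant once we have passed to $M\#n(S^2\times S^2)$ — this last point being essentially item (g) of Section \ref{Section Strategy}, that a single surgery already washes out the knot.

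The main obstacle I anticipate is making the claim "$M_K \#_{T^2}(T^2\times S^2) \approx M$" (equivalently, that $Z_K^\ast$ is independent of $K$) fully rigorous: one must check that the gluing diffeomorphism of the 3-torus boundary extends over the $T^2 \times D^2$ that one reglues, and that the Fintushel--Stern knot surgery torus $T_1$ lies in the part of $M$ that is unaffected, so that after the loop surgeries one genuinely recovers $M\#n(S^2\times S^2)$ and not some twisted variant. This is where the careful bookkeeping of framings from the building-block lemmas (the ``product framing'' clauses) does the real work, and where I would be most careful to cite Remark \ref{irrelevant framing} and the Moishezon-trick normalization already set up in Section \ref{Section manifold W_g} and Section \ref{Section Manifold N_g}. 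I expect the rest of the argument to be formal manipulation of fiber sums and connected sums.
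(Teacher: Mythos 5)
Your proposal is correct and follows essentially the same route as the paper: commute the loop surgeries past the fiber sum to write $Z_K^\ast = M_K \#_{T^2} B_G^\ast$, invoke the framed-torus clauses of Lemma \ref{Lemma Diffeomorphism 3} and Lemma \ref{Lemma Diffeomorphism 2} to replace $B_G^\ast$ by $(T^2\times S^2)\# n(S^2\times S^2)$, absorb the trivial fiber sum with $T^2\times S^2$ (using Remark \ref{irrelevant framing}), and then dissolve the knot surgery. The only point to make explicit is that the last step, $M_K\# n(S^2\times S^2)\approx M\# n(S^2\times S^2)$, is exactly the stabilization result of Akbulut, Auckly, or Baykur, which is what the paper cites there.
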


\begin{proof} We fix a knot $K \in \mathcal{K}$ and take a closer look at the assemblage of $Z^*_{K}$ to prove the existence of the diffeomorphsm (\ref{Ambient Diffeomorphism 1}). 
Recall that the manifold $Z_K$ is defined as the generalized fiber sum of $M_K$ and $B_G$ along the 2-tori $T_2\subset M_K$ with framing $\tau_K$ and $T\subset B_G$ with the lagrangian framing. Since  $\mathcal{L}_G\subset B_G$ is disjoint from $T$, the 4-manifold $Z_K^*$ is the generalized fiber sum of $M_K$ and $B_G^*$ along $T_2\subset M_K$ and $T\subset B_G^*$. Lemma \ref{Lemma Diffeomorphism 3} and Lemma \ref{Lemma Diffeomorphism 2} say that there is a diffeomorphism\begin{equation*}B_G^*\approx (T^2\times S^2)\# n (S^2\times S^2)\end{equation*}that sends the framed 2-torus $T$ to the canonical 2-torus\begin{equation}\label{Canonical Torus}T^2\times \{p\}\subset (T^2\times S^2)\# n (S^2\times S^2).\end{equation} Therefore, the 4-manifold $Z_K^*$ is diffeomorphic to the generalized fiber sum of $M_K$ and $(T^2\times S^2)\# n(S^2\times S^2)$ along the 2-tori $T_2\subset M_K$ and (\ref{Canonical Torus}). To sum up, using remark \ref{irrelevant framing}, we have\[Z_K^*=M_K\#_{T^2} B_G^*\approx M_K\#_{T^2} (T^2\times S^2)\#n(S^2\times S^2)\approx M_K\#n(S^2\times S^2).\] A result of Akbulut \cite{[Akbulut1]}, Auckly \cite{[Auckly]} or Baykur \cite{[Baykur]} allows us to conclude that $Z^\ast_{K}$ is diffeomorphic to the connected sum $M\#n(S^2\times S^2)$ for any $K\subset S^3$. \end{proof}

%where the last diffeomorphism comes from the fact that any generalized fiber sum with the canonical 2-torus in $T^2\times S^2$ simplifies (in particular the framing $\tau_K$ of $T_2$ is not playing any role).

\subsection{The 2-links of Theorem \ref{Theorem A} and topological isotopy}\label{Section TOPIsotopy} In this section, we construct the $2$-links of Theorem \ref{Theorem A} and show that they are pairwise topologically isotopic and componentwise topologically unknotted.

\begin{proposition}\label{Proposition Topologically Isotopic}
For $G\in \{ F_g , \pi_1(\Sigma_g)\}$ and $n=rk(G)$, there is an infinite collection of $n$-component 2-links \begin{equation}\label{eq: LINKS GAMMA_K}\{\Gamma_K \mid K\in\mathcal{K}\}\end{equation}smoothly embedded in $M\#n(S^2\times S^2)$ that are pairwise topologically isotopic and componentwise topologically unknotted.

The 4-manifold that is obtained from $M\#n(S^2\times S^2)$ by doing surgeries along every component of (\ref{eq: LINKS GAMMA_K}) is diffeomorphic to the $4$-manifold $Z_K$ defined in (\ref{Manifold ZZk}) for any knot $K \in \mathcal{K}$. 
\end{proposition}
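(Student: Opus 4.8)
The plan is to assemble the $2$-links $\Gamma_K$ directly from the surgery construction of Section \ref{Section Diffeomorphism Type} and then leverage the diffeomorphism type computations already established, together with Sunukjian's results, to verify each claimed property. First I would define $\Gamma_K$ to be the union of belt $2$-spheres of the $n$ loop surgeries performed along the components of $\mathcal{L}_G\subset Z_K$ to produce $Z_K^\ast$: by construction each belt sphere $S_{i,K}$ sits inside $Z_K^\ast\approx M\#n(S^2\times S^2)$ (Proposition \ref{Proposition Diffeo Ambient}) with trivial normal bundle $D^2\times S^2$, since the surgery replaces $\nu(\gamma_i')=D^2\times S^2$ by $S^1\times D^3$ and the belt sphere is $\{0\}\times S^2\subset S^1\times \partial D^3$ (equivalently, the belt sphere of a loop surgery always has trivial tubular neighborhood). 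The components are pairwise disjoint because the loops in $\mathcal{L}_G$ are disjoint, so we genuinely obtain an $n$-component $2$-link in the sense of Definition \ref{Definition Knotted Link}. Reversing the surgeries—i.e.\ cutting out $\nu(\Gamma_K)=\bigsqcup D^2\times S^2$ and regluing $\bigsqcup S^1\times D^3$—recovers $Z_K$, which establishes the last sentence of the proposition; I would phrase this as: doing surgery on a framed $2$-sphere with trivial normal bundle is the inverse operation to the loop surgery that created it.

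Next I would handle topological unknottedness of each component. Fix a knot $K$ and a component $S_{i,K}$; I want a locally flat embedded $3$-ball in $M\#n(S^2\times S^2)$ bounded by it. Here I invoke step (d) of Section \ref{Section Strategy}: Sunukjian's \cite[Theorem 7.2]{[Sunukjian2]} gives a criterion—essentially that a $2$-sphere which is nullhomotopic (indeed our $S_{i,K}$ bounds in the ambient manifold after the surgery since its complement $Z_K$ has a compatible fundamental group and the sphere is the belt sphere of a surgery, hence nullhomotopic) and which lies in a suitably simple homological position is topologically unknotted. I would spell out that $[S_{i,K}]=0\in H_2(M\#n(S^2\times S^2);\Z)$: the belt sphere of a loop surgery is nullhomologous because it caps off to give a class supported in the surgered solid torus region, and more directly, $M\#n(S^2\times S^2)$ is simply connected so $S_{i,K}$ is nullhomotopic, and then the hypotheses of \cite[Theorem 7.2]{[Sunukjian2]} on the complement having the correct equivariant intersection form / fundamental group are met by the explicit handle picture. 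The conclusion is a locally flat $3$-ball $D_i$ with $\partial D_i=S_{i,K}$, which is precisely the definition of topologically unknotted; doing this componentwise finishes property (3).

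For pairwise topological isotopy, the engine is steps (e) and (f) of Section \ref{Section Strategy}. Since $Z_K^\ast$ and $Z_{K'}^\ast$ are both diffeomorphic to $M\#n(S^2\times S^2)$ (Proposition \ref{Proposition Diffeo Ambient}), and since the surgery-complement of $\Gamma_K$ in $Z_K^\ast$ is diffeomorphic to $Z_K$, I would produce a homeomorphism of complements $Z_K\to Z_{K'}$: this is exactly the homeomorphism $F\colon Z\to Z_K$ (extended through the chain of knots) constructed in the proof of Proposition \ref{Proposition Smooth Structures 1}, which is built from Freedman's theorem \cite{[Freedman]} and Sunukjian's \cite[Theorem 7.1]{[Sunukjian2]}, and which by construction is the identity outside $\nu(T_2)$—in particular it is the identity on a neighborhood of each $\nu(\gamma_i')$ and hence on a neighborhood of $\Gamma_K$. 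Gluing back the surgery tori by the identity, $F$ extends to a homeomorphism of pairs $(M\#n(S^2\times S^2),\Gamma_K)\to (M\#n(S^2\times S^2),\Gamma_{K'})$ inducing the identity on $H_2$ (as in step (f)). Then I appeal to the results of Quinn and Perron (that a self-homeomorphism of a closed simply connected topological $4$-manifold inducing the identity on homology is topologically isotopic to one fixing a given locally flat submanifold setwise, combined with the fact that such a homeomorphism is isotopic to the identity) to upgrade this homeomorphism of pairs to a topological ambient isotopy carrying $\Gamma_K$ to $\Gamma_{K'}$; since $\mathcal{K}$ is infinite we get the whole infinite collection pairwise topologically isotopic.

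The main obstacle I anticipate is the careful bookkeeping in the topological-isotopy step: one must check that the homeomorphism $F$ of Proposition \ref{Proposition Smooth Structures 1} can genuinely be arranged to be the identity on a neighborhood of \emph{all} of $\Gamma_{K}$ (not merely away from $T_2$), that it induces the identity on $H_2$ of the \emph{closed} ambient manifold after regluing the $n$ surgery tori (one must track the classes of the belt spheres and the dual classes coming from $S^2\times S^2$ summands), and that the Quinn--Perron machinery applies to pairs consisting of a simply connected $4$-manifold together with a locally flat $2$-link rather than a single $2$-sphere—this is where the phrase ``the explicit nature of our constructions'' in step (f) is doing real work, and it needs to be unpacked. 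The unknottedness step is comparatively routine once the correct hypotheses of \cite[Theorem 7.2]{[Sunukjian2]} are verified, and the ``surgery recovers $Z_K$'' statement is essentially immediate from the definitions.
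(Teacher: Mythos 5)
Your proposal follows the same route as the paper: define $\Gamma_K$ as the belt spheres of the loop surgeries (the second clause then being surgery reversal), invoke Sunukjian's \cite[Theorem 7.2]{[Sunukjian2]} for componentwise topological unknottedness, and obtain topological isotopy by extending the homeomorphism of complements from Proposition \ref{Proposition Smooth Structures 1} to a homeomorphism of pairs of $M\#n(S^2\times S^2)$ inducing the identity on homology, then applying Perron--Quinn. The paper also compares everything to the reference link $\Gamma_U$ built from the unknot rather than to arbitrary pairs, but that is cosmetic.

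There is, however, one genuine gap, which you correctly flag as the ``main obstacle'' but do not actually close: the assertion that the glued homeomorphism of pairs induces the identity on $H_2$ of the closed ambient manifold. The homeomorphism $g_K:(Z_K^*,\Gamma)\to(Z_U^*,\Gamma)$ does exist and is the identity near $\Gamma$, but when you transport it to $M\#n(S^2\times S^2)$ you must conjugate by the identifications $\phi_K:Z_K^*\to M\#n(S^2\times S^2)$ and $\phi_U$ coming from Proposition \ref{Proposition Diffeo Ambient}, and these are a priori unrelated for different knots; the composite $\phi_U\circ g_K\circ\phi_K^{-1}$ has no reason to act trivially on homology, and without that Perron--Quinn does not apply. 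The paper's fix is to \emph{choose} $\phi_K$ correctly: any diffeomorphism from Proposition \ref{Proposition Diffeo Ambient} can be post-composed with a self-diffeomorphism of $M\#n(S^2\times S^2)$ realizing a prescribed automorphism of the intersection form, by Wall's theorem \cite[Theorem 2]{[Wall]}, which is applicable because the form is indefinite (Lemma \ref{lemma inequality Betty number M} -- this is one reason that lemma is in the paper). One then \emph{defines} $\Gamma_K:=\phi_K(\Gamma)$ for this corrected $\phi_K$, so that $(\phi_K)_*=(\phi_U\circ g_K)_*$ and the resulting self-homeomorphism of the pair is the identity on homology by construction. Your proposal instead fixes $\Gamma_K$ via an arbitrary identification and then hopes to verify the homology condition by ``tracking classes,'' which is not guaranteed to succeed; the Wall correction is the missing idea. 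The rest of your argument (trivial normal bundles of belt spheres, nullhomotopy of the components, surgery reversal recovering $Z_K$) matches the paper and is sound.
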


\begin{proof} Let $U\subset S^3$ be the unknot. For any knot $K \in \mathcal{K}$, there is a homeomorphism of pairs\begin{equation}\label{Homeomorphism 1}f_K:(Z_{ K}, \mathcal{L}_G)\rightarrow (Z_{ U}, \mathcal{L}_G)\end{equation}that restricts to the identity in a neighborhood of $\mathcal{L}_G$ by the proof of Proposition \ref{Proposition Smooth Structures 1}. The 4-manifolds $Z_U^*$ and $Z_K^*$ are the result of surgeries along the components of the submanifold $\mathcal{L}_G\subset B_G$ and we define  $\Gamma\subset B_G^*\setminus \nu (T)$ to be the 2-link given by the disjoint union of the belt $2$-spheres of such surgeries. In particular, there is an embedding of $\Gamma$ inside $Z_K$ for any knot $K \in \mathcal{K}$, and the number of components of $\Gamma$ and of $\mathcal{L}_G$ are both equal to $n$. Moreover, the homeomorphism (\ref{Homeomorphism 1}) induces a homeomorphism of pairs\begin{equation*}g_K: (Z_K^*,\Gamma)\to(Z_U^*,\Gamma).\end{equation*} Proposition \ref{Proposition Diffeo Ambient} allows us to fix a diffeomorphism $\phi_U:Z_U^*\to M\# n (S^2\times S^2)$ and to define $\Gamma_U$ as $\phi_U(\Gamma)$.\\
Notice that we also have a diffeomorphism
\begin{equation}\label{eq: definition phi_K}\phi_K: Z_K^*\to M\# n (S^2\times S^2)\end{equation}such that $(\phi_K)_*=(\phi_U\circ g_K)_*$ in homology for every $K\in \mathcal{K}$. Indeed, any diffeomorphism taken from Proposition \ref{Proposition Diffeo Ambient} can be adjusted at the level of homology by composing it via a self-diffeomorphism of $M\# n(S^2\times S^2)$ by a result of Wall \cite[Theorem 2]{[Wall]} since the intersection form of $M$ is indefinite; see Lemma \ref{lemma inequality Betty number M}.\\
We define the 2-link (\ref{eq: LINKS GAMMA_K}) to be the image of $\Gamma_K$ under the diffeomorphism (\ref{eq: definition phi_K}), i.e. \begin{equation}\label{gammak} \Gamma_K := \phi_K(\Gamma).\end{equation}
We now argue that the 2-links $\Gamma_K$ and $\Gamma_U$ are topologically isotopic for any knot. The map\begin{equation*}h_K=\phi_U\circ g_K\circ\phi_K^{-1}:(M\# n(S^2\times S^2),\Gamma_K)\to (M\# n(S^2\times S^2),\Gamma_U)\end{equation*} is a homeomorphism of pairs that induces the identity map in homology. Work of Perron \cite{[Perron]} and Quinn \cite{[Quinn]} guarantees that $h_K$ is isotopic to the identity map and, hence, we conclude that the 2-links $\Gamma_K$ and $\Gamma_{U}$ are topologically isotopic for any $K\in\mathcal{K}$. Each component of the 2-link (\ref{eq: LINKS GAMMA_K}) is topologically unknotted by either \cite[Theorem B]{[Torres]} or \cite[Theorem 7.2]{[Sunukjian2]}. This concludes the proof of the first clause of Proposition \ref{Proposition Topologically Isotopic}.

The second clause is straightforward from the construction of $Z_K$ and $Z_K{^\ast}$.

%Moreover, the 4-manifold obtained from $M\# n(S^2\times S^2)$ by doing surgeries along the 2-spheres of the 2-link $\Gamma_K$ is diffeomorphic to the 4-manifold obtained from $Z_K^*$ by doing surgery on the components of $\Gamma$. This latter manifold is diffeomorphic to $Z_K$, since each sphere surgery is cancelling the corresponding loop surgery operation.
\end{proof}

\subsection{Smoothly inequivalent $2$-links}\label{Section Smoothly Inequivalent} In this section, we distinguish the smooth embeddings of our 2-links. The key idea is to tell them apart by looking at the smooth structures of their complements and use their Seiberg-Witten invariants indirectly. More precisely, we undo the surgeries and reconstruct $Z_{K}$ from $M\#n(S^2\times S^2)$ as\begin{equation}\label{Reconstruction 2}Z_{K} \approx (M\#n(S^2\times S^2)\setminus  \nu(\Gamma_K))\cup (\nu(\mathcal{L}_G))\end{equation}and use inequivalent smooth structures constructed in Proposition \ref{Proposition Smooth Structures 1} in order to distinguish our 2-links.

\begin{proposition}\label{Proposition Smoothly Inequivalent links}Let $\Gamma$ and $\Gamma'$ be a pair of 2-links that are smoothly embedded in a smooth 4-manifold $Z^*$. Let $Z$ and $Z'$ be the 4-manifolds that are obtained from $Z^*$ by doing surgery along every component of $\Gamma$ and $\Gamma'$, respectively. If there is no diffeomorphism $Z\rightarrow Z'$, then the 2-links $\Gamma$ and $\Gamma'$ are smoothly inequivalent.

In particular, the 2-links (\ref{eq: LINKS GAMMA_K}) constructed in Proposition \ref{Proposition Topologically Isotopic} are pairwise smoothly inequivalent.
\end{proposition}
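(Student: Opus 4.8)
The plan is to prove the general statement first and then apply it to the concrete family. For the general claim, suppose toward a contradiction that there is a diffeomorphism of pairs $\psi \colon (Z^*, \Gamma) \to (Z^*, \Gamma')$. The key point is that the surgery operation producing $Z$ from $(Z^*,\Gamma)$ depends only on the diffeomorphism type of the pair $(Z^*, \nu(\Gamma))$ together with the framing data, and that this framing data is canonical here: each component $S_i$ of $\Gamma$ is a belt $2$-sphere of a loop surgery, so it has trivial normal bundle $\nu(S_i) = D^2 \times S^2$, and re-doing surgery means gluing in $\nu(\mathcal{L}_G) = D^3 \times S^1$ copies along $\partial(D^2\times S^2) = S^1 \times S^2$ via the essentially unique identification (up to the diffeomorphisms of $S^1\times S^2$, which extend over $D^3\times S^1$). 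Concretely, $Z \approx (Z^* \setminus \nu(\Gamma)) \cup_\varphi \bigsqcup \nu(\gamma_i)$ and $Z' \approx (Z^* \setminus \nu(\Gamma')) \cup_{\varphi'} \bigsqcup \nu(\gamma_i')$ as in (\ref{Reconstruction 2}). First I would restrict $\psi$ to complements to get a diffeomorphism $Z^* \setminus \nu(\Gamma) \to Z^* \setminus \nu(\Gamma')$, then argue it extends across the reglued pieces to yield a diffeomorphism $Z \to Z'$, contradicting the hypothesis.

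The one subtlety worth addressing explicitly is the framing/gluing ambiguity: a priori $\psi$ restricted to $\partial\nu(\Gamma) \to \partial\nu(\Gamma')$ need not match the specific gluing maps $\varphi,\varphi'$ used to build $Z$ and $Z'$. I would handle this by noting that surgery on a $2$-sphere with trivial normal bundle, i.e. filling $S^1 \times S^2$ by $D^3\times S^1$ or equivalently carving out a tubular neighborhood of a loop, is unambiguous up to diffeomorphism: any self-diffeomorphism of $S^1\times S^2$ extends over $D^3 \times S^1$ (this is classical — $\pi_0\,\mathrm{Diff}(S^1\times S^2)$ is generated by maps that extend, the relevant point being that there is no analogue of a nontrivial framing choice because $\pi_1(SO(3))=\Z/2$ gets killed and the reflections extend). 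Hence whatever identification $\psi|_{\partial}$ induces on the boundary tori, we may post-compose with a diffeomorphism of the filling piece to make the two gluings agree, and the extended map is a genuine diffeomorphism $Z\to Z'$.

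For the ``in particular'' clause, apply the general statement with $Z^* = M\#n(S^2\times S^2)$, $\Gamma = \Gamma_K$ and $\Gamma' = \Gamma_{K'}$ for two knots $K\neq K'$ in $\mathcal{K}$ with distinct Alexander polynomials. By the second clause of Proposition \ref{Proposition Topologically Isotopic}, doing surgery on all components of $\Gamma_K$ recovers (up to diffeomorphism) the $4$-manifold $Z_K$ of (\ref{Manifold ZZk}), and likewise $\Gamma_{K'}$ recovers $Z_{K'}$. Proposition \ref{Proposition Smooth Structures 1} shows $Z_K$ and $Z_{K'}$ have distinct Seiberg--Witten invariants (their $\mathcal{SW}$-polynomials differ because $\Delta_K \neq \Delta_{K'}$), hence are non-diffeomorphic. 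The general statement then forces $\Gamma_K$ and $\Gamma_{K'}$ to be smoothly inequivalent, and since $K,K'$ were arbitrary distinct elements of $\mathcal{K}$, the family (\ref{eq: LINKS GAMMA_K}) is pairwise smoothly inequivalent.

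I expect the only real obstacle to be writing the extension-across-the-surgery argument cleanly — making precise that the reconstruction (\ref{Reconstruction 2}) is functorial enough under diffeomorphisms of pairs, i.e. that a diffeomorphism of $(Z^*,\Gamma)$ pairs with $(Z^*,\Gamma')$ genuinely descends to a diffeomorphism of the surgered manifolds. Everything else (distinctness of $Z_K$'s, identification of the surgered manifold with $Z_K$) is already supplied by the earlier propositions.
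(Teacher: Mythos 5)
Your proposal is correct and matches the paper's treatment: the paper declares the general statement immediate and omits the proof, and handles the ``in particular'' clause exactly as you do, by noting that Proposition \ref{Proposition Smooth Structures 1} makes the $Z_K$ pairwise non-diffeomorphic. Your filling-in of the omitted step --- that a diffeomorphism of pairs restricts to the complements and extends over the reglued $S^1\times D^3$ pieces because every self-diffeomorphism of $S^1\times S^2$ extends over $S^1\times D^3$ --- is the right justification and contains no gap.
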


The proof of Proposition \ref{Proposition Smoothly Inequivalent links} is immediate and, hence, omitted. Any distinct pair of 2-links in the collection (\ref{eq: LINKS GAMMA_K}) satisfies the assumptions of this proposition by Proposition \ref{Proposition Smooth Structures 1}.

\subsection{Symmetric and Brunnianly exotic 2-links}\label{Section Brunnian}In this section, we study some properties of the 2-links of Theorem \ref{Theorem A}. We start with the following result regarding their symmetry.

\begin{proposition}\label{prop Symmetricity}
Suppose $G = F_g$. Every 2-link $\Gamma_K \subset M\#g(S^2\times S^2)$ belonging to the infinite collection (\ref{eq: LINKS GAMMA_K}) is smoothly symmetric.
\end{proposition}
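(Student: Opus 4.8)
The plan is to exhibit, for each transposition $\sigma = (i\, i{+}1)$ of adjacent indices, a self-diffeomorphism of $M\#g(S^2\times S^2)$ that swaps the components $S_{i,K}$ and $S_{i+1,K}$ of $\Gamma_K$ and fixes the others; since transpositions generate the symmetric group, composing such diffeomorphisms will realize an arbitrary permutation. First I would recall that the $2$-link $\Gamma$ sits inside $B_{F_g}^* = N_g^*$ as the belt spheres of surgeries along the loops $\gamma_1',\dots,\gamma_g'$, and that $\Gamma_K = \phi_K(\Gamma)$ for a diffeomorphism $\phi_K \colon Z_K^* \to M\#g(S^2\times S^2)$. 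So it suffices to produce the symmetry at the level of the abstract model: a diffeomorphism of $Z_K^*$ (equivalently of $N_g^*$, since the surgery region is disjoint from the fiber-sum locus $T$) permuting the belt spheres, and then conjugate by $\phi_K$.

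The key observation I would use is Remark \ref{Remark Construction}: $N_g = N_1 \#_{T^2}\cdots\#_{T^2} N_1$ is a symplectic sum of $g$ copies of the Kodaira–Thurston manifold, and the $i$-th Lagrangian torus $x'\times b_i$ (hence the loop $\gamma_i'$, hence the $i$-th belt sphere after surgery) lives in the $i$-th summand. There is an evident diffeomorphism of $N_g$ that interchanges two adjacent copies of $N_1$ in this fiber-sum decomposition while fixing a neighborhood of the section torus $T = x\times y$ (and fixing the other summands): the fiber sum of manifolds is symmetric in its factors, so swapping two factors is realized by a diffeomorphism, and this can be arranged to be the identity near $T$ and to carry $\gamma_i' \sqcup \gamma_{i+1}'$ to $\gamma_{i+1}' \sqcup \gamma_i'$ (possibly after an isotopy matching the framings, which are all the Lagrangian framing and hence canonical). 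Performing the surgeries along $\mathcal{L}_{F_g}$ is natural under this diffeomorphism, so it descends to a diffeomorphism of $N_g^*$ permuting the belt spheres $S_i, S_{i+1}$; being the identity near $T$, it survives the fiber sum with $M_K$ to give a diffeomorphism of $Z_K^*$ with the desired effect on $\Gamma$. Conjugating by $\phi_K$ gives a diffeomorphism of $M\#g(S^2\times S^2)$ exchanging $S_{i,K}$ and $S_{i+1,K}$, which is what symmetry of Definition \ref{Definition Knotted Link} requires.

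The main obstacle I expect is making precise the claim that the factor-swap of the generalized fiber sum $N_1\#_{T^2}\cdots\#_{T^2}N_1$ can be chosen to be the identity on a neighborhood of the fixed section torus $T$ and to respect the gluing data so that it genuinely extends across the $M_K$ fiber sum. Concretely one must check: (a) the two copies of $N_1$ being swapped can be taken to meet only along the common fiber torus used in the iterated sum, with $T$ lying in a third region fixed throughout; (b) the framings of $\gamma_i'$ and $\gamma_{i+1}'$ — all induced by the unique Lagrangian framing on $x'\times b_i$, $x'\times b_{i+1}$ — are interchanged correctly, so that the surgery presentations match and the belt spheres are genuinely swapped rather than merely isotoped to one another's complement. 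Once the diffeomorphism of $N_g$ is pinned down with these properties, the rest is formal: naturality of surgery, extension across the fiber sum region where the map is the identity, and conjugation by $\phi_K$. I would also remark that this argument only uses $G = F_g$ because the $b_i$'s sit in separate Kodaira–Thurston summands; the surface-group building block $T^2\times\Sigma_g$ does not have an analogous factorwise symmetry respecting all the surgery loops simultaneously, which is why the statement is restricted to the free case.
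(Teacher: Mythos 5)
Your proposal is correct and follows essentially the same route as the paper: the paper's proof likewise reduces to exhibiting a self-diffeomorphism of $N_g$ that is the identity on a neighborhood of the torus $T$ and permutes the framed loops $\gamma_i'$, and then transports it through the surgeries, the fiber sum with $M_K$, and the identification $\phi_K$. The only difference is that the paper merely asserts the existence of that self-diffeomorphism, whereas you supply a construction via the factor-swap symmetry of the iterated fiber sum (equivalently, a handle-permuting diffeomorphism of $\Sigma_g$ commuting with $D_{a_1}\circ\cdots\circ D_{a_g}$), together with a sensible list of the framing and locality checks this requires.
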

\begin{proof}For any permutation $\sigma$ of $g$ elements, there is a self-diffeomorphism\begin{equation}\label{Self Diffeo}f: N_g\rightarrow N_g\end{equation}that is the identity on a neighborhood of the $2$-torus (\ref{T}) and such that it maps the framed loops (\ref{loops2}) to  $f(\gamma'_i)=\gamma'_{\sigma(i)}$ for every $i = 1,\dots, g$. The map (\ref{Self Diffeo}) yields a diffeomorphism of pairs\begin{equation*}
    (Z_K,\gamma'_1,\dots,\gamma'_g)\to(Z_K,\gamma'_{\sigma(1)},\dots,\gamma'_{\sigma(g)}),
\end{equation*}which allows us to define a diffeomorphism of pairs\begin{equation*}
    (Z^*_K,S_1,\dots,S_g)\to(Z^*_K,S_{\sigma(1)},\dots,S_{\sigma(g)}).
\end{equation*} In particular, the $2$-link $\Gamma:=S_1\sqcup\dots\sqcup S_g\subset Z_K^*$ is smoothly symmetric and so is the 2-link (\ref{gammak}) smoothly embedded in $M\# g(S^2\times S^2)$, given that there is a diffeomorphism of pairs\begin{equation*}\phi_K:(Z^*_K,\Gamma)\to(M\#g(S^2\times S^2),\Gamma_K);\end{equation*} see(\ref{eq: definition phi_K}).
\end{proof}

We will employ the following two results to address Brunnianity.

\begin{lemma}\label{gompf mandlebaum}Mandelbaum \cite{[Mandlebaum]}, Gompf \cite[Lemma 4]{[Gompf]}. 
    Let $X$ and $B$ be two oriented 4-manifolds (possibly with boundary) and let $T_X \subset X$ and $\alpha_B\times \beta_B=T_B \subset B$ be two smoothly embedded framed 2-tori. Suppose that $X$, $B$ and $X\setminus \nu(T_X)$ are simply connected and that either $X$ is spin, or $X\setminus \nu (T_X)$ is non-spin. Consider the generalized fiber sum\begin{equation*}F = X\#_{T^2}B\end{equation*} of $X$ and $B$ along $T_X$ and $T_B$. Then $F\# S^2 \times S^2$ is diffeomorphic to $X\# B^\ast$, where $B^\ast$ is the manifold obtained from $B$ by doing surgery along the push-offs of the loops $\alpha_B$ and $\beta_B$. Moreover, we can assume the chosen diffeomorphism to be the identity on $\partial F = \partial X \sqcup \partial B$ if $X$ or $B$ have non-empty boundary.
\end{lemma}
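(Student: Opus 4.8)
\textbf{Proof plan for Lemma \ref{gompf mandlebaum}.}

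The plan is to reduce the statement to the by-now standard picture for stabilizing a generalized fiber sum along a torus, which is exactly the content of Mandelbaum's and Gompf's cited work, and then to explain how the hypotheses guarantee that the stabilizing $S^2\times S^2$ can be absorbed. First I would set up the decomposition $F = (X\setminus\nu(T_X))\cup_{\varphi}(B\setminus\nu(T_B))$, where $\varphi$ identifies the two boundary $3$-tori compatibly with the chosen framings, so that the gluing region is $T^2\times S^1$. The key local move is to realize the connected sum with $S^2\times S^2$ inside the fiber-sum region: take the $S^2\times S^2$ summand as a neighborhood of a $2$-torus $T^2\times\{pt\}$ isotopic to a section, remove it, and recognize that gluing $S^2\times S^2$ back along $T^2$ with its product framing has the effect of ``filling in'' the two rim tori $\alpha_B\times S^1$ and $\beta_B\times S^1$ by $2$-handles --- this is precisely the surgery on the push-offs of $\alpha_B$ and $\beta_B$ that produces $B^\ast$ from $B$. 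Equivalently, one uses the Moishezon-type trick already invoked in Lemma \ref{Lemma Diffeomorphism 3} and Lemma \ref{Lemma Diffeomorphism 1}: a fiber sum along $T^2$ followed by a stabilization can be rewritten as an internal log transform plus handle cancellations, after which the $X$-side is undisturbed and the $B$-side has been modified exactly by the two torus surgeries.

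The next step is to track the simple connectivity and spin hypotheses, which is what makes the resulting ``obvious'' diffeomorphism genuinely exist rather than merely a homeomorphism. Since $X$, $B$, and $X\setminus\nu(T_X)$ are simply connected, van Kampen gives $\pi_1(F)=1$, and similarly $\pi_1(X\# B^\ast)=1$; the loops $\alpha_B,\beta_B$ being nullhomotopic in $B^\ast$ (they bound the belt spheres of the surgeries, together with disks coming from $B$ simply connected) is what allows the surgery description to be carried out inside a ball, hence interpreted as a connected-sum with $S^2\times S^2$ rather than the twisted bundle. The spin condition --- $X$ spin, or $X\setminus\nu(T_X)$ non-spin --- is the parity input: it pins down which of $S^2\times S^2$ or $S^2\,\tilde\times\,S^2$ appears, exactly as in the intersection-form argument used in the proofs of Lemma \ref{Lemma Diffeomorphism 3} and Lemma \ref{Lemma Diffeomorphism 1}, so that the untwisted $S^2\times S^2$ on the left matches the untwisted connected-sum structure on the right. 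I would phrase this by computing the intersection form of $X\# B^\ast$ and checking its type against that of $F\# S^2\times S^2$, invoking Freedman/Wall-type rigidity only if needed; in fact the diffeomorphism can be built by hand from the handle moves.

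Finally I would address the relative (boundary) refinement: when $X$ or $B$ has nonempty boundary, none of the moves above touch a collar of $\partial X\sqcup\partial B$, since the fiber-sum region, the $S^2\times S^2$ summand, and all the handle slides are supported in the interior near $T_X$ and $T_B$; hence the diffeomorphism is the identity on $\partial F=\partial X\sqcup\partial B$ as claimed. The main obstacle I anticipate is not any single deep theorem but the bookkeeping of framings: one must check that the ``product framing'' on $T^2\times\{pt\}\subset S^2\times S^2$, the framing used in the fiber sum, and the Lagrangian/product framings on $T_X$ and $T_B$ are mutually compatible so that the rim-torus surgeries come out with the correct (trivial) framing coefficients --- getting a wrong framing here would change $B^\ast$. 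Since the paper only ever applies this with the explicit product framings of the building blocks $T^2\times\Sigma_g$ and $N_g$ (and cites the original sources for the general statement), I would either reproduce Gompf's framing computation from \cite[Lemma 4]{[Gompf]} or simply refer to it, noting that Remark \ref{irrelevant framing} already shows the relevant fiber-sum diffeomorphism types are framing-insensitive in our setting.
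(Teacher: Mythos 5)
The paper does not prove this lemma at all: it is quoted verbatim from Mandelbaum and Gompf \cite[Lemma 4]{[Gompf]}, so there is no internal argument to compare against. Judged on its own, your sketch points in the right general direction (dissolve the neck after one stabilization, interpret the effect on the $B$-side as surgery on push-offs of $\alpha_B$ and $\beta_B$, keep everything supported away from $\partial X\sqcup\partial B$), but it has a genuine gap at its core.

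The missing ingredient is the geometrically dual sphere to $T_X$. The hypothesis $\pi_1(X\setminus\nu(T_X))=1$ is not there for van Kampen bookkeeping, as your second step suggests; it is the engine of the whole argument. It guarantees that the meridian of $T_X$ bounds a disk in $X\setminus\nu(T_X)$, which caps a normal disk to give a sphere $S$ meeting $T_X$ transversely once, and it is the pair $(T_X,S)$ with $T_X\cdot S=1$ that lets one unknot the gluing of the two $T^3$-boundaries after a single stabilization. Without such a dual sphere the lemma is simply false: generic torus fiber sums do not dissolve after one $S^2\times S^2$. Relatedly, your ``key local move'' --- taking ``the $S^2\times S^2$ summand as a neighborhood of a $2$-torus'' --- is not meaningful as stated ($S^2\times S^2$ is not a tubular neighborhood of a torus), and the Moishezon trick from Lemmas \ref{Lemma Diffeomorphism 3} and \ref{Lemma Diffeomorphism 1} (splitting a loop surgery into a zero log transform plus a loop surgery) does not by itself produce the diffeomorphism $F\#(S^2\times S^2)\approx X\#B^\ast$. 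Finally, the spin hypothesis is not a post hoc check on intersection forms: its actual role is to control the parity of $S\cdot S$ (if $X$ is spin it is automatically even; if $X\setminus\nu(T_X)$ is non-spin one can tube $S$ to an odd-square surface in the complement to adjust it), which is what ensures the \emph{untwisted} stabilization suffices. A correct write-up should construct $S$, run Gompf's handle argument around $\nu(T_X\cup S)$, and only then do the framing bookkeeping you describe; alternatively, simply cite \cite[Lemma 4]{[Gompf]} as the paper does.
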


\begin{lemma}\label{spin states}
    Let $M$ be an admissible 4-manifold. If $M$ is non-spin, then, up to swapping the roles of the $2$-tori $T_1$ and $T_2$, $M\setminus \nu(T_2)$ is non-spin. 
\end{lemma}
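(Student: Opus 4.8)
The plan is to argue by contradiction using the characterization of spin structures via characteristic elements of the intersection form. Recall that a closed oriented $4$-manifold (and, more generally, by a standard extension, a $4$-manifold with the $S^1\times S^1\times S^1$ boundary that occurs here) is spin precisely when its intersection form is even, i.e.\ when every element of $H_2$ has even self-intersection, equivalently when $0$ is a characteristic element. So I would phrase the claim as: if both $M\setminus\nu(T_1)$ and $M\setminus\nu(T_2)$ are non-spin, then $M$ is non-spin; equivalently, if $M$ is spin, then at least one of the two complements is spin (and by symmetry of the hypothesis we may name it $T_2$).

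First I would recall, exactly as in the proof of Lemma~\ref{lemma inequality Betty number M}, that because $M\setminus(\nu(T_1)\sqcup\nu(T_2))$ is simply connected each torus $T_i$ admits a geometrically dual surface $S_i$, so in $M$ we have the rank-$4$ summand $\mathcal{S}=\langle[T_1],[S_1],[T_2],[S_2]\rangle$ with intersection matrix of the block form displayed there; in particular $[T_i]^2=0$ and $[T_i]\cdot[S_i]=1$. Next I would use Mayer--Vietoris and half-lives-half-dies type reasoning to identify $H_2(M\setminus\nu(T_2);\Z)$: removing $\nu(T_2)\cong T^2\times D^2$ kills the class $[S_2]$ (its dual disappears) and the meridional/longitudinal rim tori of $T_2$ become boundary-parallel, so up to these the second homology of $M\setminus\nu(T_2)$ is spanned by the classes surviving from $H_2(M)$ that are orthogonal to $[T_2]$, with $[T_1]$ and $[S_1]$ among them and $[T_1]^2=0$, $[S_1]^2=\ast$, $[T_1]\cdot[S_1]=1$ still holding. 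The key point is that the pairing on $H_2(M\setminus\nu(T_2);\Z)$ is the restriction of $Q_M$ to $[T_2]^{\perp}/\langle[T_2]\rangle$ (after discarding boundary classes), so it contains a hyperbolic $\langle[T_1],[S_1]\rangle$-summand plus the orthogonal complement $B$ of $\mathcal{S}$ from Lemma~\ref{lemma inequality Betty number M}; the same description with $1\leftrightarrow2$ interchanged holds for $M\setminus\nu(T_1)$.

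With this in hand the argument is pure linear algebra over $\Z/2$. Suppose $M$ is spin, so $Q_M$ is even; then its restriction $B$ is even, and the hyperbolic block is even, so the intersection form of $M\setminus\nu(T_2)$ — being the orthogonal sum of an even hyperbolic block with $B$, modulo boundary classes which have trivial self-intersection — is even, hence $M\setminus\nu(T_2)$ is spin. That already gives more than needed; but I want the converse direction stated in the lemma, so I would instead start from ``$M$ non-spin'': then some class $\xi\in H_2(M)$ has odd self-intersection. Writing $\xi$ with respect to the decomposition $\mathcal{S}\oplus B$, its $\mathcal{S}$-component contributes an even self-intersection when it is a combination of $[T_1],[T_2]$ (self-intersection $0$), and in general the parity of $\xi^2$ is controlled by the $[S_1],[S_2]$-coefficients and by $B$. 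If the odd contribution comes from $B$, then $B$ is odd and both complements are non-spin, which is consistent. If it comes, say, from the $[S_1]$-part, i.e.\ $\xi^2$ picks up oddness already inside $\langle[T_1],[S_1]\rangle$ (which requires $[S_1]^2$ odd), then $[S_1]$ survives in $M\setminus\nu(T_2)$ with odd self-intersection, so $M\setminus\nu(T_2)$ is non-spin; relabelling if the oddness sits on the $[S_2]$-side instead, we may assume the bad torus is $T_2$. In all cases we may choose the labelling so that $M\setminus\nu(T_2)$ is non-spin.

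The main obstacle I anticipate is making the second-homology computation of $M\setminus\nu(T_2)$ fully rigorous — in particular correctly accounting for the rim tori of $T_2$ and verifying that the intersection form on $H_2(M\setminus\nu(T_2);\Z)$ really is the restriction of $Q_M$ to $[T_2]^{\perp}$ modulo the (self-intersection-zero) boundary classes, so that parity is preserved. Once that is pinned down, the spin/non-spin bookkeeping via characteristic elements is routine. An alternative that sidesteps some of this, which I would keep in reserve, is to note that $M$ spin and $[T_2]$ of self-intersection $0$ imply that a tubular neighborhood $\nu(T_2)$ is spin and its complement inherits a spin structure from $M$ by restriction (spin structures restrict to open submanifolds), giving directly that $M\setminus\nu(T_2)$ is spin whenever $M$ is; contrapositively, if both complements are non-spin then $M$ is non-spin, which is precisely the contrapositive of the lemma after the harmless relabelling allowed by the ``up to swapping'' clause.
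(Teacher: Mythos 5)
Your main line of argument is, at its core, the paper's own proof. Both rest on the same three ingredients: simple connectivity of $M\setminus(\nu(T_1)\sqcup\nu(T_2))$ supplies geometrically dual immersed spheres $S_1,S_2$ with $S_1\cap T_2=S_2\cap T_1=\emptyset$; one splits into cases according to the parity of $[S_1]^2$ and $[S_2]^2$ (this is exactly where the ``up to swapping'' clause is consumed); and when both are even one corrects a class $\alpha$ of odd square so that it becomes orthogonal to $[T_2]$ without changing the parity of its square, then realizes it in the complement. The paper does this more economically than you propose: it sets $\sigma=\alpha-(\alpha\cdot[T_2])[S_2]$, checks $\sigma^2\equiv\alpha^2\pmod 2$ and $\sigma\cdot[T_2]=0$, and removes the geometric intersections with $T_2$ by tubing. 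In particular the ``main obstacle'' you anticipate --- computing the full intersection form of $H_2(M\setminus\nu(T_2);\Z)$, rim tori and all --- is not needed: to certify non-spinness of the simply connected complement it suffices to exhibit a single closed surface in it with odd self-intersection, and self-intersection is unchanged under inclusion into $M$. Your $\mathcal{S}\oplus B$ bookkeeping is a heavier but workable substitute for the paper's one-line correction term.

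That said, two of your framing statements are logically wrong, and one of them would sink the proof if you actually fell back on it. The lemma asserts: $M$ non-spin $\Rightarrow$ at least one of $M\setminus\nu(T_1)$, $M\setminus\nu(T_2)$ is non-spin. Your opening ``equivalent'' reformulation (``if both complements are non-spin then $M$ is non-spin'') and your reserve alternative both prove the \emph{converse}. Restricting a spin structure of $M$ shows ``$M$ spin $\Rightarrow$ $M\setminus\nu(T_2)$ spin'', whose contrapositive is ``$M\setminus\nu(T_2)$ non-spin $\Rightarrow$ $M$ non-spin'' --- not the lemma. The whole content of the lemma is that non-spinness \emph{survives} passage to (one of) the complements, which is not automatic: the complement of a surface in a non-spin $4$-manifold is frequently spin, e.g.\ $\mathbb{CP}^2\setminus\nu(\mathbb{CP}^1)$ is a $4$-ball. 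So the reserve alternative must be discarded; fortunately your main argument, once it ``starts from $M$ non-spin'', proves the correct implication and never uses either mis-stated equivalence.
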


\begin{proof} Since $M\setminus \nu(T_2)$ is simply connected, it is enough to show that there exists an element $\sigma \in H_2(M\setminus \nu(T_2); \Z)$ with odd self-intersection. The assumption $\pi_1(M\setminus (\nu(T_1) \sqcup \nu(T_2)))=\{1\}$ implies the existence of immersed 2-spheres $S_1,S_2 \subset M$ such that $S_i\cdot T_i=1$ for $i=1,2$ and $S_1 \cap T_2 = S_2 \cap T_1 = \emptyset$. If either of them has odd self-intersection, then the complement of one of the 2-tori contains a surface of odd self-intersection and we are done. Suppose that $S_1 \cdot S_1$ and $S_2\cdot S_2$ are both even. Since $M$ is non-spin, there exists an element $\alpha \in H_2(M; \Z)$ with odd self-intersection. Define $\sigma:=\alpha -( \alpha \cdot [T_2])[S_2]$ and notice that we have \begin{equation*}
        \sigma\cdot \sigma=\alpha\cdot \alpha-2(\alpha \cdot  [T_2])([S_2]\cdot \alpha) +(\alpha\cdot [T_2])^2 \alpha\cdot [T_2] \equiv \alpha \equiv 1 \ mod \ 2
    \end{equation*}
    and \begin{equation*}
        \sigma \cdot [T^2]=\alpha \cdot [T_2] - \alpha \cdot [T_2]=0.
    \end{equation*}
    Let $\Sigma \subset M$ be an embedded surface representing the homology class $\sigma \in H_2(M;\Z)$. The algebraic intersection of $\Sigma$ and $T_2$ is zero. We can eliminate the geometric intersection by tubing $\Sigma$ in oppositely signed intersection points, obtaining a new surface $\Sigma'$ in $M\setminus \nu(T_2)$ which has odd self-intersection.
\end{proof}
We are now ready to prove the main result of this section.
\begin{proposition}\label{Brunnian}
    For $G=F_g$, consider the family (\ref{eq: LINKS GAMMA_K}). There exists an infinite subset $\mathcal{K}' \subset \mathcal{K}$ of knots $K \subset S^3$ parametrizing a subfamily 
    \begin{equation}\label{K' family}
        \{\Gamma_K \mid K \in \mathcal{K}'\}
    \end{equation} of pairwise Brunnianly exotic $2$-links. %in the case where the $2$-link group is $G=F_g$ the free group with $g$ generators

\end{proposition}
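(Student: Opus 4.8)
The plan is to show that for an infinite subfamily of knots, removing any one component of $\Gamma_K$ yields a $2$-link that is smoothly equivalent to the $2$-link obtained from $\Gamma_{K'}$ by removing any one of its components; combined with the exoticness already established in Proposition \ref{Proposition Smoothly Inequivalent links}, this gives the Brunnianity of Definition \ref{Definition Brunnianly}. First I would record that, by the symmetry statement of Proposition \ref{prop Symmetricity}, deleting component $S_{i,K}$ yields a $2$-link $\Gamma_K\setminus S_{i,K}$ whose smooth equivalence class is independent of the index $i$; so it suffices to compare $\Gamma_K\setminus S_{g,K}$ with $\Gamma_{K'}\setminus S_{g,K'}$. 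Undoing the surgeries as in (\ref{Reconstruction 2}), the complement of $\Gamma_K\setminus S_{g,K}$ is obtained from $Z_K^\ast$ by reconstructing only the first $g-1$ surgery tori; equivalently, the surgered manifold obtained from $M\#g(S^2\times S^2)$ along the $g-1$ remaining belt spheres is diffeomorphic to the generalized fiber sum $M_K\#_{T^2}(B_{F_g}^{(g-1)})$, where $B_{F_g}^{(g-1)}$ is the building block $N_g$ with only $g-1$ of its $g$ loop surgeries performed. By Proposition \ref{Proposition Smoothly Inequivalent links} it therefore suffices to show that this latter $4$-manifold has a diffeomorphism type independent of $K$ (for $K$ ranging over a suitable infinite subset $\mathcal{K}'$).

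The key geometric input is Step (g) of Section \ref{Section Strategy}: performing one fewer loop surgery leaves a single undone loop, which persists as a $\gamma_g'$-type circle lying on a Lagrangian $2$-torus $x'\times b_g\subset N_g$. I would argue that $N_g$ with $g-1$ of its loop surgeries performed is diffeomorphic to $(T^2\times S^2)\#(g-1)(S^2\times S^2)$ with the canonical torus $T^2\times\{p\}$ preserved together with an extra Lagrangian torus $x'\times b_g$ disjoint from it, so that the resulting fiber sum with $M_K$ can be re-expressed, via Lemma \ref{gompf mandlebaum} (Mandelbaum--Gompf), as a connected sum after one further stabilization; more precisely, the generalized fiber sum $M_K\#_{T^2}B_{F_g}^{(g-1)}$, being the fiber sum of $M_K$ along $T_2$ with a manifold still containing a Lagrangian torus, when stabilized by a single $S^2\times S^2$ becomes $M_K\# (\text{something independent of }K)$. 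Here I would invoke Lemma \ref{spin states} to guarantee that, after possibly swapping $T_1$ and $T_2$, the complement $M\setminus\nu(T_2)$ has the spin type required by Lemma \ref{gompf mandlebaum}. The point is then that the surgered complement, \emph{once stabilized by one $S^2\times S^2$}, becomes $M\#(\text{fixed})\#(\text{stabilizers})$ regardless of $K$ — which is exactly the second sentence of the second bullet of Theorem \ref{Theorem B} — while \emph{before} stabilization the $K$-dependence of $\mathcal{SW}$ is controlled by a single Alexander-polynomial factor $\Delta_K(e^{2c})$ coming from the knot surgery on $T_1$ inside $M_K$, exactly as in the proof of Proposition \ref{Proposition Smooth Structures 1}.

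Thus the structure is: (i) by Proposition \ref{prop Symmetricity} reduce to comparing the distinguished deletions; (ii) identify the surgered complement of a single deletion with a generalized fiber sum of $M_K$ with the once-stabilized building block, and apply Lemmas \ref{gompf mandlebaum} and \ref{spin states} to peel off the $K$-independent connected summands; (iii) compute $\mathcal{SW}$ of the remaining piece via Taubes' gluing formula exactly as in Proposition \ref{Proposition Smooth Structures 1}, finding it is again $\mathcal{SW}_{M}\cdot\Delta_K(e^{2c})\cdot(\text{fixed symplectic factor})$ up to a unit, hence that distinct Alexander polynomials still produce non-diffeomorphic surgered complements when \emph{only} $g-1$ loops are reconstructed but that the deleted-component links $\Gamma_K\setminus S_{i,K}$ and $\Gamma_{K'}\setminus S_{j,K'}$ have diffeomorphic \emph{surgered manifolds} after the single extra stabilization, which forces them to be smoothly equivalent $2$-links; (iv) choose $\mathcal{K}'\subset\mathcal{K}$ infinite so that the full links $\Gamma_K$ are still pairwise inequivalent — automatic since pairwise distinct Alexander polynomials is an infinite condition, e.g. take all of $\mathcal{K}$.

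\textbf{Main obstacle.} The delicate point is step (ii): one must check that deleting a component really corresponds to \emph{undoing} a loop surgery cleanly and that the leftover circle $\gamma_g'$ sits on the Lagrangian torus $x'\times b_g$ in a way compatible with the framings, so that Lemma \ref{gompf mandlebaum} applies with the correct (Lagrangian) push-off framings; tracking the framing of $T_2\subset M_K$ through the homeomorphism $g\circ f$ of Proposition \ref{Proposition Smooth Structures 1} and through the fiber sum, and verifying the spin hypothesis via Lemma \ref{spin states} with the possible $T_1\leftrightarrow T_2$ swap (which interacts with where the knot surgery was performed), is where the real bookkeeping lies. A secondary subtlety is confirming that the residual symplectic/$\mathcal{SW}$ factor is genuinely independent of $K$ and nonzero, so that the "before stabilization" manifolds remain distinguished while the "after stabilization" ones collapse — i.e. that one has not accidentally stabilized away the very invariant that detects exoticness of the full link.
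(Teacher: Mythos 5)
Your proposal assembles several of the right ingredients (the reduction via Proposition \ref{prop Symmetricity}, the two applications of Lemma \ref{gompf mandlebaum} together with Lemma \ref{spin states} to dissolve the knot surgery after absorbing one $S^2\times S^2$), but its logical core is invalid. You repeatedly try to deduce that $\Gamma_K\setminus S_{i,K}$ and $\Gamma_{K'}\setminus S_{j,K'}$ are \emph{smoothly equivalent} from the fact that their associated surgered $4$-manifolds are \emph{diffeomorphic}. Proposition \ref{Proposition Smoothly Inequivalent links} only runs in the other direction: non-diffeomorphic surgered manifolds imply inequivalent links. A diffeomorphism of the surgered manifolds need not respect the surgery data, so it gives no diffeomorphism of pairs, and proving Brunnianity requires exhibiting one. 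The paper's proof does exactly this: it writes $(M\#g(S^2\times S^2),\Gamma_K\setminus S_{K,1})\approx((M_K\#_{T^2_K}N^*)\#_{T^2}N_{g-1}^*,\Gamma\setminus S_1)$, so that the sublink sits entirely inside the $K$-independent block $N_{g-1}^*$, and then shows that the framed pair $(M_K\#_{T^2_K}N^*,T')$ realizes at most four diffeomorphism types \emph{relative to a neighborhood of the gluing torus} $T'$ — this is precisely why the ``identity on the boundary'' clause of Lemma \ref{gompf mandlebaum} matters — so the diffeomorphism extends by the identity over $N_{g-1}^*$ and carries $\Gamma\setminus S_1$ to itself. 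Your write-up records that $\psi$ and $\varphi$ fix certain boundaries but never uses this to extend to a map of pairs; instead it falls back on the invalid converse of Proposition \ref{Proposition Smoothly Inequivalent links}.

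Two further problems. First, your step (iii) asserts that distinct Alexander polynomials ``still produce non-diffeomorphic surgered complements'' for the $(g-1)$-component sublinks and that equivalence only appears ``after the single extra stabilization.'' If the surgered manifolds of the sublinks were really non-diffeomorphic, Proposition \ref{Proposition Smoothly Inequivalent links} would make the sublinks smoothly \emph{inequivalent}, contradicting the statement you are proving; and an argument that only works after stabilizing the ambient manifold by an extra $S^2\times S^2$ proves the second sentence of Theorem \ref{Theorem B}, not Brunnianity, which concerns sublinks inside $M\#g(S^2\times S^2)$ itself. The point you are missing is that no external stabilization is needed: deleting $S_1$ frees up the $S^2\times S^2$ summand already present in the $N^*$ block containing it, and it is this internal summand that dissolves $M_K$ into $M$. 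Second, taking $\mathcal{K}'=\mathcal{K}$ is not ``automatic'': the residual $K$-dependence lives in the framing $\tau_K$ of the fiber sum, which can change the result of the loop surgeries on $(T^2\times S^2)$; the paper bounds the possibilities by four and applies the pigeonhole principle (see Remark \ref{subfamily} for why one can, with extra care about framings, avoid passing to a subfamily). There is also a small bookkeeping slip: reconstructing $g-1$ of the surgery tori leaves a building block with only \emph{one} loop surgery performed, not $g-1$.
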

\begin{proof}
        Let $\Gamma_K = S_{K,1} \sqcup S_{K,2} \sqcup ... \sqcup S_{K,g}$ be a g-component 2-link from the family (\ref{eq: LINKS GAMMA_K}). Thanks to Proposition \ref{prop Symmetricity}, in order to prove the theorem it is enough to show that removing the first component from each $\Gamma_K$ yields a family of 2-links that is no longer exotic. By construction, we have the following diffeomorphism of pairs 
    \begin{equation}\label{brunnian decomposition}
        (M\# n(S^2 \times S^2) , \Gamma_K) \approx (M_K\#_{T^2_K} (N^* \#_{T^2} ... \#_{T^2} N^*),\Gamma).
    \end{equation}
    Here the notation $\#_{T^2_K}$ is used to point out the fact that the framing of that generalized fiber sum depends on the knot $K$, while $\Gamma=S_1 \sqcup \dots \sqcup S_g$ is the 2-link made up by the belt spheres of the surgery (\ref{Definition N_g*}) on $N_g$. Such diffeomorphism can be chosen to send the component $S_{K,1}$ to $S_1$ for each knot $K$. In particular, each one of the components $S_i$ is contained in a $ N^* \approx (T^2 \times S^2) \#(S^2 \times S^2)$ block, which is therefore disjoint from the $(n-1)$-component $2$-link $\Gamma \setminus S_i $. In the following, we will use the copy of $S^2\times S^2$ that intersects $S_1$ to stabilize $\Gamma \setminus S_1$, which is contained in the remaining $N^*\#_{T^2} ... \#_{T^2} N^* \approx N_{g-1}^*$. In particular, we  have the following diffeomorphism of couples
\[(M\#g(S^2\times S^2),\Gamma\setminus S_{K,1})\approx ((M_K\#_{T^2_K} N^*) \#_{T^2} N_{g-1}^*,\Gamma\setminus S_1).\]

    To conclude it is enough to show that there exists an infinite subfamily $\mathcal{K'}\subseteq \mathcal{K}$ such that for any two knots $K,K'\in\mathcal{K'}$ there exists a diffeomorphism between $M_K\#_{T^2_K}N^*$ and  $M_{K'}\#_{T^2_{K'}}N^*$ relative to $\nu (T')$, where $T'$ is a parallel push-off of $T$ in $N^*$. In particular, we will see that there are at most four diffeomorphism types of pairs in the family $\{(M_K\#_{T^2_K}N^*,T') \mid K \subset \mathcal{K} \}$ where $T'$ is framed. By the pigeonhole principle, we can obtain $\mathcal{K'}$.

    We have the following identification by (\ref{Diffeomorphism 1})
    \begin{align*}
    M_K\#_{T^2_K}N^*  &\approx (M_K \# (S^2 \times S^2)) \#_{T^2_K} (T^2 \times S^2 )   
    \end{align*}
    in which the framed 2-torus $T'$ is sent to the canonically framed 2-torus $T^2\times\{p\}\subset T^2\times S^2$.

If $M$ is non-spin, then $M_K$ is non-spin and, by Lemma \ref{spin states}, $M \setminus \nu(T_2)$  is also non-spin, and they are all simply connected. Since $M_K$ is the generalized fiber sum between $M$ and $S^1 \times S^3$, we can apply Lemma \ref{gompf mandlebaum} to obtain a diffeomorphism $\psi$ between $(M\setminus \nu (T_2)) \# (S^2\times S^2)$ and $(M_K\setminus\nu (T_2)) \# (S^2\times S^2) $ which is the identity on the boundary $\partial \nu (T_2)$, see \cite{[Baykur]} for more details. We extend $\psi$ by the identity on $(T^2 \times S^2) \setminus \nu(T^2 \times \{p\})$, the result is the map $\Tilde{\psi}$
  \begin{align*}(M_K \# (S^2 \times S^2)) \#_{T^2_K} (T^2 \times S^2 )   
        \overset{\Tilde{\psi}}{\approx}(M \# (S^2 \times S^2)) \#_{T^2_K} (T^2 \times S^2 )   .  
   \end{align*}
   
    We can apply Lemma \ref{gompf mandlebaum} a second time as follows. Consider the generalized fiber sum $\mathcal{F}_K$ between $M$ and $(T^2\times S^2) \setminus \nu(T^2\times \{p\})$ along $T_2$, with framing depending on $K$, and $T^2\times\{q\}$. 
   Lemma \ref{gompf mandlebaum} implies that $\mathcal{F}_K \# (S^2 \times S^2 )$ is diffeomorphic to
   %$M\# (\mathcal{S}(T^2\times S^2 \setminus \nu(T^2\times \{p\}),2))$ by a diffeomorphism $\varphi$
   $M\# ((T^2\times S^2) \setminus \nu(T^2\times \{p\}))^*$ by a diffeomorphism $\varphi$
   that keeps the boundary $\partial \nu(T^2 \times \{p\})$ fixed. 
  % Here the notation $\mathcal{S}(X,n)$ denotes the result of $n$ loop surgeries on a 4-manifold $X$.
     Here we use the notation $X^*$ to denote the result of two loop surgeries on a 4-manifold $X$ along the loops described by Lemma \ref{gompf mandlebaum}.
   Hence, we can extend $\varphi$ via the identity to a map
   %\begin{align*}(M \# (S^2 \times S^2)) \#_{T^2_K} (T^2 \times S^2 )           \overset{\Tilde{\varphi}}{\approx}(M \# \mathcal{S} ( (T^2 \times S^2),2) .    \end{align*}
   \begin{align*}(M \# (S^2 \times S^2)) \#_{T^2_K} (T^2 \times S^2 )   
        \overset{\Tilde{\varphi}}{\approx}M \#  (T^2 \times S^2)^*.  
   \end{align*}

   We do not need to determine the result of the loop surgeries %$\mathcal{S}( (T^2 \times S^2) ,2 )$
   $(T^2 \times S^2)^*$
   . Since the loops do not depend on the knot $K$, but their framings might, there are four possibilities at most for the framed pair 
   %$(\mathcal{S}( (T^2 \times S^2) ,2 ),\Tilde{\varphi}(T^2\times \{p\})$. 
    $((T^2 \times S^2)^*,\Tilde{\varphi}(T^2\times \{p\})$. 
   %In particular, there are at most four diffeomorphism types of pairs in the family $\{(M_K\#_{T^2_K}N^*,T') \mid K \subset \mathcal{K} \}$ where $T'$ is framed. By the pigeonhole principle, we obtain $\mathcal{K'}$ and conclude.
\end{proof}

\begin{remark}\label{stabilization}
    By taking the connected sum of the ambient manifold with a copy of $S^2\times S^2$, the same proof of Proposition \ref{Brunnian} shows that the 2-links of the family (\ref{K' family}) (instead of their sublinks) become smoothly equivalent in the ambient 4-manifold $(M\# g( S^2 \times S^2))\#(S^2 \times S^2)$.
\end{remark}

%-----FINISH OLD VERSION----------------------
\begin{remark}\label{subfamily}
    It is possible to prove that $\mathcal{K}'$ is equal to $\mathcal{K}$. In particular, it is possible to avoid the use the pigeonhole principle in the preceding proof by taking a closer look at the framing of the loops and our definition of the framing $\tau_K$ for the 2-torus $T_2$ inside $M_K$.
\end{remark}

\begin{remark}\label{Remark Trivial Situation} From a pair of exotic 2-spheres in a 4-manifold, one easily obtains exotic 2-links with a prescribed number of components and the free group with $g$ generators as $2$-link group by adding unknotted 2-spheres. The results in this section prove that this is not the case for the $2$-links constructed in Theorem \ref{Theorem A} with 2-link group $F_g$.

\end{remark}

In the case of $G=\pi_1(\Sigma_g)$ and with the same arguments, one can show a weaker property after pairing up the 2-link components, in which disregarding a pair of components from the $n$-component 2-link yields smoothly equivalent $(n-2)$-component 2-links. %This would be enough to rule out the trivial construction described in Remark \ref{Remark Trivial Situation}.

\section{Proofs}
\subsection{Proof of Theorem \ref{Theorem A}}\label{Section Proof of Theorem A}

We collect the results of previous sections into a proof of Theorem \ref{Theorem A}. The infinite set (\ref{Unlink Main}) of 2-links $\{\Gamma_K \mid K\in\mathcal{K}\}$ smoothly embedded in $M\# n(S^2\times S^2)$ was constructed in Proposition  \ref{Proposition Topologically Isotopic}. The 2-link group of $\Gamma_K$ is isomorphic to the fundamental group of the manifold obtained from $M\# n(S^2\times S^2)$ by doing surgery on every component of $\Gamma_K$. This manifold is $Z_K$ by Proposition \ref{Proposition Topologically Isotopic} and by Proposition \ref{Proposition Smooth Structures 1} we know that its fundamental group $\pi_1(Z_K)$ is isomorphic to $G$. This settles the first clause of the theorem. 
The second clause of the theorem is that the collection of 2-links $\{\Gamma_K \mid K\in\mathcal{K}\}$ is an exotic family. By Proposition \ref{Proposition Topologically Isotopic}, elements of this family are pairwise topologically isotopic and by Proposition \ref{Proposition Smoothly Inequivalent links} they are pairwise smoothly inequivalent. The 2-links are componentwise topologically unknotted by Proposition \ref{Proposition Topologically Isotopic} and this establishes the third clause. The infinite set of pairwise non-diffeomorphic closed 4-manifolds of the fourth clause has been described in detail in Section \ref{Section Smooth Structures}. 
\hfill $\square$

\subsection{Proof of Theorem \ref{Theorem B}}\label{Section Proof of Theorem B} The 2-links of Theorem \ref{Theorem A} with $2$-link group $F_g$ are symmetric by Proposition \ref{prop Symmetricity}. The existence of the infinite subfamily of pairwise Brunnianly exotic 2-links that stabilize after one connected sum with $S^2\times S^2$ follows from Proposition \ref{Brunnian} and Remark \ref{stabilization}. 
\hfill $\square$

%----Tasks Begin-------------------
%\newpage
%{\bf Tasks/Changes to do 14/10/2023:}
%%%%     \item add proof for the symmetry 
    %\end{itemize}
    %\item Check strategy of Theorem A and if everyone is happy with it.
    %\item We should add a strategy also for Theorem B in the section "Strategy"? VOTE (add your name): \begin{itemize}
     %   \item YES: Y
      %  \item NO: Oli

       % Vale: Maybe we just write a few lines about the idea of the proof?
    %%\end{itemize}
   % \item Should we add the proof for Lemma \ref{lemma inequality Betty number M}?
    %%%  \item NO: 
    %\end{itemize} 
    %\item Check that Definition 4 is a-ok.
    %\item Proposition 1: Check if the computation of SW in the proof are detailed enough or if something should be added.
    %\item Section 3.9, Brunnianly exotic 2-links, \begin{itemize}
     %   \item Proposition \ref{Brunnian}: is possible to simplify a little bit the proof? The notation looks heavy.
    %\end{itemize}
%\end{enumerate}
%{\bf To keep in mind:}
%\begin{enumerate}
 %   \item Notation for surfaces: 2-sphere, 2-torus ecc
  %  \item Good use of parenthesis
   % \item Use "1-dimensional submanifold" not "1-submanifold" or "1-manifold"
%\end{enumerate}
%---End Tasks--------------

\end{document}